\tikzset{cong/.style={draw=none,edge node={node [sloped, allow upside down, auto=false]{$\simeq$}}},
         Isom/.style={draw=none,every to/.append style={edge node={node [sloped, allow upside down, auto=false]{$\simeq$}}}}}   
\newcommand{\excise}[1]{}
\newtheorem{thm}{Theorem}
\newtheorem{theorem}{Theorem}[section]
\newtheorem{lemma}[theorem]{Lemma}
\newtheorem{proposition}[theorem]{Proposition}
\newtheorem{corollary}[theorem]{Corollary}
\newtheorem*{thm*}{Theorem}
\newtheorem*{cor*}{Corollary}
\theoremstyle{definition}
\newtheorem{definition}{Definition}
\newtheorem{remark}[thm]{Remark}
\begin{document}

\title[Conformal blocks from vertex algebras and their connections on $\overline{\mathcal{M}}_{g,n}$]{Conformal blocks from vertex algebras \\ 
and their connections on $\overline{\mathcal{M}}_{g,n}$}

\author[C.~Damiolini]{Chiara Damiolini}
\address{Chiara Damiolini \newline \indent Department of Mathematics, Princeton University, Princeton, NJ 08544}
\email{chiarad@princeton.edu}

\author[A.~Gibney]{Angela Gibney}
\address{Angela Gibney \newline \indent  Department of Mathematics, Rutgers University, Piscataway, NJ 08854}
\email{angela.gibney@rutgers.edu}

\author[N.~Tarasca]{Nicola Tarasca}
\address{Nicola Tarasca 
\newline \indent Department of Mathematics \& Applied Mathematics,
\newline \indent Virginia Commonwealth University, Richmond, VA 23284 
\newline \indent {\textit{Previous address:}} Department of Mathematics, Rutgers University 
}
\email{tarascan@vcu.edu}

\subjclass[2010]{14H10, 17B69, 14C17 (primary), 81R10, 81T40, 16D90 (secondary)}
\keywords{Vertex algebras, conformal blocks and coinvariants, connections and Atiyah algebras, \linebreak \indent sheaves on moduli of curves, Chern classes of vector bundles on moduli of curves}

\begin{abstract}
We show that coinvariants of modules over  vertex operator algebras give rise to quasi-coherent sheaves on moduli  of stable pointed curves.
These  generalize  Verlinde bundles or vector bundles of conformal blocks defined using affine Lie algebras studied first by Tsuchiya--Kanie,  Tsuchiya--Ueno--Yamada, and extend work of a number of researchers.
The sheaves  carry a twisted logarithmic $\mathcal{D}$-module structure, and hence support a projectively flat connection. We identify the  logarithmic Atiyah algebra acting on them, generalizing work of Tsuchimoto for affine Lie algebras.
\end{abstract}


\maketitle



Vertex algebras
have been described as analogues of commutative associative algebras and  complex Lie algebras.  They extend constructions in the representation theory of affine Lie  algebras,  conformal field theory, finite group theory and combinatorics, integrable systems, and modular functions  \cite{m, B, Monster, DongLiMasonTwisted, bd, bzf, GS,  MModular, HuangVerlinde2005, Designs}.  

We study geometric realizations of  representations of a vertex operator algebra $V$. The idea, originating in \cite{TK, tuy}, formulated here from the perspective of \cite{bd, bzf}, is to assign  a $V$-module $M^i$ to each marked point $P_i$ on an algebraic curve $C$,  then quotient $\otimes_i M^i$ by the action of a  Lie algebra (\S \ref{PFC})  depending on $(C,P_{\bullet})$ and $V$, and thus obtain  \textit{vector spaces of coinvariants}.  
We carry this out relatively over the moduli space $\overline{\mathcal{M}}_{g,n}$ of stable pointed curves  (Def.~\ref{SOCM}). This generalizes prior work on smooth curves \cite{bzf, bd},  coinvariants obtained from affine Lie algebras \cite{TK, tuy, ts}, and special cases using vertex algebras \cite{u, NT, an1}.   
Our main result is:

\begin{thm*} \label{thmIntro}	
Given $n$ modules $M^1, \dots, M^n$ over a vertex operator algebra $V$:
\begin{enumerate}

\item The spaces of coinvariants give rise to a quasi-coherent sheaf  ${\mathbb{V}}(V; M^\bullet)$ on $\overline{\mathcal{M}}_{g,n}$.\smallskip

\item Assume moreover that all the modules $M^i$ are simple. Denote by $a_i$ the conformal dimension of $M^i$ and by $c$ be the central charge of $V$. Then, the  Atiyah algebra $\frac{c}{2}\mathcal{A}_\Lambda +\sum_{i=1}^n a_i \mathcal{A}_{\Psi_i}$ with logarithmic singularities along the divisor of singular curves in $\overline{\mathcal{M}}_{g,n}$ acts on  ${\mathbb{V}}(V; M^\bullet)$ specifying a twisted logarithmic $\mathcal{D}$-module structure. 
\end{enumerate}
\end{thm*}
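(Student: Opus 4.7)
The plan is to work relatively over $\overline{\mathcal{M}}_{g,n}$ with the universal family of stable $n$-pointed curves $\pi\colon \overline{\mathcal{C}} \to \overline{\mathcal{M}}_{g,n}$, promoting the fiberwise construction from Def.~\ref{SOCM} to a globally defined quasi-coherent sheaf. Concretely, one globalizes the Lie algebra recalled in \S\ref{PFC} by using the universal curve and its marked sections to build a sheaf of (topological) Lie algebras on $\overline{\mathcal{M}}_{g,n}$ acting on the relative tensor product (after appropriate completion) of the modules $M^1,\dots,M^n$ regarded as quasi-coherent $\mathcal{O}_{\overline{\mathcal{M}}_{g,n}}$-modules. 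The sheaf ${\mathbb{V}}(V;M^\bullet)$ is then realized as the cokernel of the action map, so that quasi-coherence in part (i) is essentially tautological once the relative construction is set up correctly.

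The nontrivial content of part (i) is therefore the verification that the relative construction is well-behaved at the boundary $\partial\overline{\mathcal{M}}_{g,n}$, where the curves acquire nodes. My strategy here would be to work locally near a nodal point using the normalization, extending the chiral-algebra/$V$-module insertion data across the node; the sheaf-theoretic quotient then makes sense even when the fiber has a node, because the Lie algebra action and the module tensor product are defined over the whole base. Showing that this agrees with the naive coinvariants over the smooth locus is then a matter of comparing the two descriptions of sections of $V$ over the (punctured) curve, and follows from the general formalism of \cite{bd, bzf} extended to the nodal case.

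For part (ii), I would implement the Virasoro uniformization philosophy of \cite{tuy, bzf}: deformations of a pointed curve together with formal coordinates at its marked points are generated infinitesimally by Virasoro-type vector fields, which act on each $M^i$ via its vertex operator structure and on $V$ via its conformal vector. This produces a projective flat connection on ${\mathbb{V}}(V;M^\bullet)$ whose failure to lift to an honest $\mathcal{D}$-module structure is controlled by the Virasoro cocycle. Reading off the coefficients of this cocycle: the $L_0$-eigenvalue $a_i$ on the simple module $M^i$ matches the scaling of the coordinate at $P_i$ and thereby contributes $a_i\,\mathcal{A}_{\Psi_i}$, while the central charge $c$ of $V$ governs the deformations of the underlying curve and contributes $\tfrac{c}{2}\mathcal{A}_\Lambda$ (with $\Lambda$ the Hodge/determinant-of-cohomology line bundle). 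To get the \emph{logarithmic} version along the boundary, one replaces the smoothing parameter $q$ at a node by $q\partial_q$, so that the Virasoro action near a nodal curve naturally has a logarithmic pole and produces the Atiyah algebra with logarithmic singularities along the boundary divisor.

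The main obstacle, as I see it, is the boundary analysis underlying both parts. For (i) one must show that the coinvariant construction glues across nodal curves in a way that is flat enough to give a sheaf of the expected type; for (ii) one must verify that the central extension arising from the Virasoro action at a smoothing node matches precisely the log-Atiyah algebra of $\Lambda$ (and not some other twist by boundary divisors). Both reduce to careful local computations in formal neighborhoods of nodal points, combining the normalization description of the node with the representation-theoretic machinery of $V$-modules; once these local computations are in place, the statement assembles globally on $\overline{\mathcal{M}}_{g,n}$.
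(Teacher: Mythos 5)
Your sketch captures the right philosophy (relative coinvariants plus Virasoro uniformization), but it mislocates the difficulty and, as a result, misses the mechanism that actually produces the theorem. The central issue is not the nodal analysis but the fact that $V$-modules are defined with respect to a formal variable, so there is no direct construction over $\overline{\mathcal{M}}_{g,n}$: the paper first builds $\widetriangle{\mathbb{V}}(V;M^\bullet)$ on the space $\widetriangle{\mathcal{M}}_{g,n}$ of curves with formal coordinates (Definition \ref{Vtriangle}) and then descends in \emph{two} stages, first along the $(\mathrm{Aut}_+\mathcal{O})^n$-torsor to $\overline{\mathcal{J}}^{1,\times}_{g,n}$ using Huang's change-of-coordinates formula (\S\ref{FirstDescent}), and then along the residual $(\mathbb{G}_m)^n$-torsor to $\overline{\mathcal{M}}_{g,n}$ (\S\ref{SecondDescent}). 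The second descent cannot be done by exponentiating $L_0$, because on a general module $L_0$ has non-integral eigenvalues $a_i+\deg v$; the paper instead uses the integral $\mathbb{Z}$-grading of $M^i$ to define the $\mathbb{C}^\times$-action (\S\ref{DesM}). This choice is not a technicality: the discrepancy between $L_0$ and the grading operator, which for a simple module is exactly the scalar $a_i$, is what produces the terms $a_i\mathcal{A}_{\Psi_i}$ in part (ii) (see the local computation $w\partial_w(v\otimes w^{\deg v})=-a(v\otimes w^{\deg v})$ in the proof of Theorem \ref{At2}). Your attribution of the $\Psi_i$-terms to ``the Virasoro cocycle'' is therefore not correct as stated --- the cocycle $\frac{1}{12}(p^3-p)\delta_{p+q,0}$ is responsible only for the central term $\frac{c}{2}\mathcal{A}_\Lambda$, and even there the identification of the resulting extension with $\frac{1}{2}\mathcal{A}_\Lambda$ (Theorem \ref{nVirAt}, the Virasoro uniformization of the determinant line) is a substantive input that your proposal flags as an ``obstacle'' but does not supply.

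A second concrete gap in part (i): over boundary strata (and already over $\mathcal{M}_{g,n}$ for families without enough sections) some components of the fibers carry no marked point, so $C\setminus P_\bullet$ is not affine and the Lie algebra of \S\ref{PFC} does not control deformations; the naive cokernel is then not the right object. The paper resolves this by adjoining auxiliary marked points decorated with the adjoint module $V$ and invoking propagation of vacua (Theorem \ref{thm:POV}, which requires $V_0\cong\mathbb{C}$ and a PBW-type basis) to prove independence of the choice, then sheafifying a direct limit. Your normalization-at-the-node discussion does not substitute for this step, and without it the sheaf is simply not defined on all of $\overline{\mathcal{M}}_{g,n}$. By contrast, the boundary issues you emphasize are handled in the paper rather economically, by extending the Virasoro uniformization (Theorem \ref{Virunif}) to stable curves so that the anchor map surjects onto $\mathcal{T}(-\log\Delta)$; no separate ``$q\mapsto q\partial_q$'' analysis at the node is performed.
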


The modules we use are specified in \S \ref{VMod}, Atiyah algebras are reviewed in \S \ref{LogAtiyahAlgebra}, and the theorem is proved in \S \ref{ProjConnVM}, where a more precise statement is given. As an application, when ${\mathbb{V}}(V;M^\bullet)$ is of finite rank on ${\mathcal{M}}_{g,n}$ and $c, a_i\in \mathbb{Q}$ (verified in some cases, see \S\ref{Remark} and also \cite{DGT2}), the action of the Atiyah algebra from the Theorem above 
gives the Chern character of ${\mathbb{V}}(V;M^\bullet)$ on $\mathcal{M}_{g,n}$:

\begin{cor*}
\label{chMgn}
When  $\mathbb{V}(V; M^\bullet)$ is of finite rank on $\mathcal{M}_{g,n}$,  $M^i$ are simple, and $c, a_i\in \mathbb{Q}$,
\[
{\rm ch}\left(  \mathbb{V}(V; M^\bullet)  \right) = {\rm rank} \,\mathbb{V}(V; M^\bullet) \, \cdot \, \exp\left( \frac{c}{2}\,\lambda + \sum_{i=1}^n a_i \psi_i \right)\, \in H^*(\mathcal{M}_{g,n}, \mathbb{Q}).
\]
In particular, Chern classes of $\mathbb{V}(V; M^\bullet)$ lie in the tautological ring.
\end{cor*}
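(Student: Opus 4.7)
The plan is to deduce the corollary from the Theorem by restricting to the open locus $\mathcal{M}_{g,n}\subset\overline{\mathcal{M}}_{g,n}$ and invoking the standard compatibility between actions of Atiyah algebras on vector bundles and Chern characters of projectively flat bundles. First I would restrict the setup of the Theorem to $\mathcal{M}_{g,n}$: there the boundary of singular curves is empty, so the logarithmic Atiyah algebra reduces to the ordinary Atiyah algebra $\frac{c}{2}\mathcal{A}_\Lambda+\sum_{i=1}^n a_i\mathcal{A}_{\Psi_i}$, and its action on the finite rank vector bundle $\mathbb{V}(V;M^\bullet)|_{\mathcal{M}_{g,n}}$ provides a projectively flat algebraic connection.

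Next I would apply the following general fact: if $E$ is a vector bundle on a smooth base admitting a projectively flat connection induced by the action of a sum $\sum_j \alpha_j \mathcal{A}_{L_j}$ of Atiyah algebras, then
\[
c_1(E) = \mathrm{rank}(E)\cdot\sum_j \alpha_j\, c_1(L_j),
\]
and, more strongly, the total Chern character is determined by
\[
\mathrm{ch}(E) = \mathrm{rank}(E)\cdot\exp\bigl(c_1(E)/\mathrm{rank}(E)\bigr).
\]
This is the usual identity for a projectively flat bundle, and it is precisely where the rationality assumption $c,a_i\in\mathbb{Q}$ is used, so that the right-hand side makes sense in $H^*(\mathcal{M}_{g,n},\mathbb{Q})$. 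Applied to $\mathbb{V}(V;M^\bullet)$ with the Atiyah algebra from the Theorem, and using $\lambda=c_1(\Lambda)$ and $\psi_i=c_1(\Psi_i)$, the claimed Chern character formula follows immediately. The tautological statement is then automatic: $\lambda$ and the $\psi_i$ are among the fundamental generators of the tautological ring of $\mathcal{M}_{g,n}$, so every Chern class of $\mathbb{V}(V;M^\bullet)$, being a polynomial in them with rational coefficients, lies in that ring.

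The hard part will be the careful bookkeeping of normalizations, which is really the only nonformal step: one must verify that the scalar by which a component $\alpha\mathcal{A}_L$ acts on the center of $\mathcal{A}_{\mathbb{V}(V;M^\bullet)}$ corresponds to exactly $\alpha\cdot c_1(L)$ in the normalization for which $c_1(L)$ equals the class of the Atiyah extension of $L$. Once this is pinned down, by recalling the definition of $\mathcal{A}_L$ as first-order differential operators on $L$ and the reformulation of an action of $\alpha\mathcal{A}_L$ on $E$ as an $\mathcal{O}$-linear splitting of the corresponding Atiyah extension twisted by $\alpha$, the coefficients $\frac{c}{2}$ and $a_i$ appear in $c_1(\mathbb{V}(V;M^\bullet))$ exactly as in the Theorem, and the remainder of the argument is purely formal.
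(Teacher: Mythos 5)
Your proposal is correct and follows essentially the same route as the paper: restrict to $\mathcal{M}_{g,n}$, invoke the fact that an action of $\alpha\mathcal{A}_{L}$ on a finite-rank bundle $E$ forces $c_1(E)=(\operatorname{rank}E)\,\alpha\, c_1(L)$ (the paper cites \cite[Lemma 5]{mop} for this, which also settles the normalization issue you flag), and then use that a projectively flat bundle satisfies $\operatorname{ch}(E)=(\operatorname{rank}E)\exp\bigl(c_1(E)/\operatorname{rank}E\bigr)$. No gaps.
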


The analogue of the Theorem for sheaves of coinvariants of integrable representations of an affine Lie algebra was proved in \cite{ts}.
The formula in the Corollary, 
proved in \S \ref{Chern}, extends the computation of the Chern classes of vector bundles of coinvariants on $\mathcal{M}_{g,n}$ in the affine Lie algebra case from~\cite{mop}.

\subsection{Overview}
In this work we extend the construction of sheaves of coinvariants ${\mathbb{V}}(V; M^\bullet)$ of modules over vertex operator algebras on the locus of smooth curves ${\mathcal{M}}_{g,n}$ from \cite{bzf}, to stable pointed curves  $\overline{\mathcal{M}}_{g,n}$ (see \S\ref{SecondDescent}).   Such sheaves on $\overline{\mathcal{M}}_{g,n}$ have been studied before in particular cases, the most well-known being the case of affine Lie  algebras, and also in more generality (see \S \ref{priorExtensions} for an account).  We note that \textit{conformal blocks} are defined as dual to coinvariants.  For vertex algebras arising from affine Lie  algebras, conformal blocks are known to be vector spaces canonically isomorphic to generalized theta functions \cite{bl1, Faltings, KNR, P, bl3, HuangVerlinde2005}.  

We work with  modules over \textit{vertex operator algebras}, which admit an action of the Virasoro algebra. 
We define these and related objects in \S \ref{Background}.  Vertex algebras and their modules depend on a formal variable $z$. The geometric realization from \cite{bzf} starts by considering $z$ as the formal coordinate at a point on an algebraic curve. One is thus led to consider the moduli space $\widetriangle{\mathcal{M}}_{g,n}$  parametrizing objects  $(C, P_\bullet=(P_1,\ldots,P_n), t_\bullet=(t_1,\ldots,t_n))$,  where $(C,P_\bullet)$ is a stable $n$-pointed curve of genus $g$, and $t_i$ is a formal coordinate at $P_i$ (\S\ref{AutBundles}).

The strategy is to first define sheaves of coinvariants $\widetriangle{\mathbb{V}}(V; M^\bullet)$ on $\widetriangle{\mathcal{M}}_{g,n}$ (Definition~\ref{Vtriangle}).
Then one shows that  $\widetriangle{\mathbb{V}}(V; M^\bullet)$  descends to a sheaf ${\mathbb{V}^J}(V; M^\bullet)$ on ${J}=\overline{\mathcal{J}}_{g,n}^{1,\times}$, the moduli space parametrizing points  $(C,P_\bullet, \tau_\bullet)$, where $\tau_i$ is a non-zero $1$-jet at $P_i$ (Definition~\ref{SheafOfCoinvariants}).  A second descent  allows one to define the  quasi-coherent sheaf ${\mathbb{V}}(V; M^\bullet)$ on $\overline{\mathcal{M}}_{g,n}$ (Theorem/Definition~\ref{SOCM}). Fibers of the sheaves are canonically isomorphic to the vector spaces of coinvariants   \eqref{spacecon} and \eqref{spaceconCstar}. Figure \ref{fig:Sheavesofcoinvariants}  depicts the relationships between these sheaves and spaces. 

The action of the Virasoro algebra on   $V$-modules is responsible for the presence of the twisted logarithmic $\mathcal{D}$-module structure on sheaves of coinvariants. The twisted $\mathcal{D}$-module structure of sheaves of coinvariants on families of smooth curves has been presented in \cite{bzf} as an integral part of their construction. 
Twisted logarithmic $\mathcal{D}$-module structures on sheaves over a smooth scheme $S$ are parametrized by elements in the $\mathbb{C}$-vector space $\textrm{Ext}^1(\mathcal{T}_S(-\log \Delta), \mathcal{O}_S)$, that is, the space of logarithmic Atiyah algebras \cite{besh}.
Part \textit{(ii)} of the above Theorem 
has the merit of identifying  the logarithmic Atiyah algebra acting on sheaves of coinvariants in case the modules are simple. 
For this, we use that for a simple module $M$, one has $L_0(v)=(\deg(v)+a)v$, for homogeneous $v\in M$, where $a\in \mathbb{C}$ defines the conformal dimension of $M$ (also called conformal weight).

Crucial to the identification of the logarithmic Atiyah algebra is the \textit{Virasoro uniformization}, Theorem \ref{Virunif}, which 
gives a Lie-theoretic realization of the logarithmic tangent bundle on families of stable curves (after \cite{adkp}, \cite{besh}, \cite{kvir}, \cite{tuy}).
This was proved in \cite{tuy};  we give an alternative proof of this result in \S\ref{AutBundles}, extending to families of stable curves with singularities the argument for families of smooth curves given in \cite{bzf}.

We work over the smooth DM  stack of stable curves $\overline{\mathcal{M}}_{g,n}$, in particular 
assuming  $2g-2+n>0$.

\subsection{Future directions}
\label{sec:future}
The results of this paper  serve as a cornerstone for the following developments: for a vertex operator algebra $V$ which is rational, $C_2$-cofinite, and with $V_0\cong\mathbb{C}$, in \cite{DGT2} we prove that coinvariants at singular curves satisfy the factorization property, and that the sheaf of coinvariants forms a vector bundle. Following work for affine Lie algebras in \cite{moppz}, in \cite{DGT3}, under the further assumption that V is simple and self-contragredient, we derive the Chern classes for $\mathbb{V}(V; M^\bullet)$ on $\overline{\mathcal{M}}_{g,n}$ from the above Corollary, the factorization \cite{DGT2}, 
and the  recursion solved in~\cite{moppz} (see \S \ref{sec:add}).  

Classes constructed from affine Lie algebras are known to be semi-ample in genus zero  \cite{fakhr}; shown to determine full-dimensional subcones of nef cones in all codimension, and used to produce new birational models of moduli of curves  (e.g., \cite{agss}, \cite{gg}, \cite{gjms}, \cite{ags}). We note that while the extension to the boundary of the sheaves we consider here could have been done with the technology in place when \cite{bzf} was written, the motivation to do so comes from these recent results.  As the representation theory of vertex operator algebras is richer than for affine Lie algebras, the sheaves $\mathbb{V}(V; M^\bullet)$ are expected to provide new information about  the birational geometry of $\overline{\mathcal{M}}_{g,n}$  \cite{DG4}.

\subsubsection*{Acknowledgements} The authors thank James Lepowsky and Yi-Zhi Huang for graciously and endlessly answering questions about vertex algebras, and David Ben-Zvi for a helpful discussion and email correspondence.  We thank Bin Gui for pointing out a mischaracterization of 
Heisenberg algebras in an earlier version, and Andr\'e Henriques for valuable comments.
We are indebted to the treatment of coinvariants of vertex algebras in \cite{bzf}, and of coinvariants of affine Lie algebras in \cite{bk}.
We thank the referee for a careful reading of the paper and for pointing out some corrections.
AG was supported by  NSF DMS-1201268.


\section{Background}\label{Background} 
Following \cite{bzf, Monster, fhl, lepli},  in \S \ref{Vira} we review the Virasoro algebra, in \S \ref{V} we define a vertex  operator algebra $V$, and in \S \ref{VMod} we define the $V$-modules we will work with. 
  Vertex algebras and their modules depend on a formal variable $z$;  in \S\ref{Vmodbdles} we will consider geometric realizations of vertex algebras and their modules 
independent of the variable $z$. To show we one can do this, we employ certain Lie groups and Lie algebras related to automorphisms and derivations of $\mathbb{C}\llbracket z \rrbracket$ and $\mathbb{C}(\!( z )\!)$ reviewed in \S \ref{functors}.   The Lie algebras arise as Lie sub- or quotient algebras of the Virasoro algebra. In \S \ref{UV}--\S \ref{HCcompM} we define the  Lie group $\textrm{Aut}_+\mathcal{O}$ and Lie algebra $\mathfrak{L}(V)$, emphasizing certain properties of their actions on $V$-modules.

\subsubsection*{Setting.} We work over the field of complex numbers $\mathbb{C}$ in the algebraic category. We  often reduce  locally  to discs obtained as spectra of completed local rings.

\subsection{The Virasoro algebra}
\label{Vira} 
Throughout we work with a $\mathbb{C}$-algebra $R$. For $\mathcal{K}$ the functor which assigns to $R$ the field of Laurent series $R(\!(z)\!)$,  consider the Lie algebra $\textrm{Der}\,\mathcal{K}(R)=R(\!( z)\!) \partial_z$. This is the \textit{Witt (Lie) algebra} with coefficients in $R$, and  is a complete topological Lie algebra with respect to the topology for which a basis of the open neighborhoods of $0$ is given by the subspaces $z^N R\llbracket z\rrbracket$, for $N\in \mathbb{Z}$.
The Witt algebra $\textrm{Der}\,\mathcal{K}(R)$ is topologically 
generated over $R$ by the derivations $L_p:=-z^{p+1}\partial_z$, for $p\in \mathbb{Z}$, with  relations $[L_p, L_q] = (p-q) L_{p+q}$. Let $\mathfrak{gl}_1$ be the functor which assigns to $R$ the Lie algebra $R$  with the trivial Lie bracket.  
The \textit{Virasoro (Lie) algebra} $\textrm{Vir}$ is the functor of Lie algebras defined as the central extension
\[
0\rightarrow \mathfrak{gl}_1 \cdot K \rightarrow \textrm{Vir} \rightarrow \textrm{Der}\, \mathcal{K} \rightarrow 0
\]
with bracket
\[
[K, L_p]=0, \qquad [L_p, L_q] = (p-q) L_{p+q} + \frac{K}{12} (p^3-p)\delta_{p+q,0}.
\]
Here $K$ is a formal vector generating  the center of  the Virasoro Lie algebra.

\subsection{Vertex operator algebras}
\label{V}
A \textit{vertex operator algebra} is the datum $(V, |0\rangle, \omega, Y(\cdot,z))$, where:
\begin{enumerate}
\item $V=\oplus_{i\geq 0} V_i$ is a $\mathbb{Z}_{\geq 0}$-graded  $\mathbb{C}$--vector space with $\dim V_i<\infty$;

\item $|0\rangle$ is an element in $V_0$, called the \textit{vacuum vector};

\item $\omega$ is an element in $V_2$, called the \textit{conformal vector};

\item $Y(\cdot,z)$ is a linear operation
\begin{eqnarray*}
Y(\cdot,z)\colon V &\rightarrow & \textrm{End}(V)\llbracket z,z^{-1} \rrbracket\\
A &\mapsto & Y(A,z) :=\sum_{i\in\mathbb{Z}} A_{(i)}z^{-i-1}.
\end{eqnarray*}
The series $Y(A,z)$ is called the \textit{vertex operator} assigned to $A$.
\end{enumerate}

The datum $(V, |0\rangle, \omega, Y(\cdot,z))$ is required to satisfy the following axioms:

\begin{enumerate}
\item[(a)] \textit{(vertex operators are fields)} for all $A,B\in V$, one has $A_{(i)}B=0$, for $i>\!>0$;

\item[(b)] \textit{(vertex operators and the vacuum)} one has $Y(|0\rangle, z)=\textrm{id}_V$, that is:
\[
|0\rangle _{(-1)} = \textrm{id}_V \qquad \mbox{and} \qquad |0\rangle _{(i)} = 0, \quad\mbox{for $i\neq -1$,}
\]
and for all $A\in V$, one has $Y(A,z)|0\rangle \in A+ zV\llbracket z \rrbracket$, that is:
\[
A_{(-1)}|0\rangle = A \qquad \mbox{and} \qquad A_{(i)}|0\rangle =0, \qquad \mbox{for $i\geq 0$};
\]

\item[(c)] \textit{(weak commutativity)} for all $A,B\in V$, there exists $N\in\mathbb{Z}_{\geq 0}$ such that
\[
(z-w)^N \, [Y(A,z), Y(B,w)]=0 \quad \mbox{in }\textrm{End}(V)\llbracket z^{\pm 1}, w^{\pm 1} \rrbracket;
\]

\item[(d)] \textit{(conformal structure)} the Fourier coefficients of the vertex operator $Y(\omega,z)$ satisfy the Virasoro relations:
\[
\left[\omega_{(p+1)}, \omega_{(q+1)} \right] = (p-q) \,\omega_{(p+q+1)} + \frac{c}{12} \,\delta_{p+q,0} \,(p^3-p) \,\textrm{id}_V, 
\]
for some constant $c\in\mathbb{C}$, called the \textit{central charge} of the vertex algebra. 
We can then identify each $\omega_{(p)}\in \textrm{End}(V)$ as an action of $L_{p-1}\in \textrm{Vir}$ on $V$. Moreover, one has
\[
L_0|_{V_i}=i\cdot\textrm{id}_V, \,\,\forall i \qquad \mbox{and} \qquad Y\left(L_{-1}A,z \right) = \partial_z Y(A,z).
\]
\end{enumerate}

We will often abbreviate $(V, |0\rangle, \omega,Y(\cdot,z))$ with $V$. 
The vertex operator of the conformal vector gives a representation of the Virasoro algebra on $V$, with the central element $K\in {\rm Vir}$ acting on $V$ as multiplication by the central charge $c$.  
The action of $L_0$ coincides with the \textit{degree} operator on $V$; the action of $L_{-1}$ --- called  \textit{translation}   --- is determined by $L_{-1}A=A_{(-2)}|0\rangle$, for $A\in V$.

As a consequence of the axioms, one has $A_{(i)}V_m\subseteq V_{m+\deg A -i-1}$, for homogeneous $A\in V$ (see e.g.,~\cite{zhu}). We will then say that the degree of the operator $A_{(i)}$ is 
\begin{equation}
\label{degAi}
\deg A_{(i)}:= \deg A -i-1, \qquad \mbox{for homogeneous $A$ in $V$.}
\end{equation}

Throughout, we assume that $V_0\cong \mathbb{C}$. In the literature, this  condition is often referred to as $V$ being of CFT-type. 
This condition is used in the proof of Theorem \ref{thm:POV}.

\subsection{Modules over vertex operator algebras}
\label{VMod} 
Let $(V, |0\rangle, \omega,Y(\cdot,z))$ be a vertex operator algebra. We abbreviate this $4$-tuple by  $V$.  
We describe here the  $V$-modules which will be used throughout. These are admissible $V$-modules satisfying additional properties.

A \textit{weak $V$-module} is a pair $\left(M,Y^M\right)$, consisting of  a $\mathbb{C}$-vector space  $M$ and a linear map $Y^M(\cdot,z)\colon V \rightarrow  \textrm{End}(M)\left\llbracket z,z^{-1} \right\rrbracket$ assigning to every element $A \in V$ an  $\textrm{End}(M)\llbracket z \rrbracket$-valued vertex operator $Y^M(A,z) :=\sum_{i\in\mathbb{Z}} A^M_{(i)}z^{-i-1}$. The pair $\left(M,Y^M(\cdot,z)\right)$ is required to satisfy the following axioms:
\begin{enumerate}
\item[(a)]$A^M_{(i)}v=0$ for $i>\!>0$, where $A\in V$, and $v\in M$;
\item[(b)]$Y^M(|0\rangle,z)=\rm{id}_M$;
  \item[(c)] \textit{(weak commutativity)} for all $A, B \in V$ there exists $N\in\mathbb{Z}_{\geq 0}$ such that for all $v \in M$ one has
\[
(z-w)^N \left[ Y^M(A,z), Y^M(B,w) \right]v=0;
\]

    \item[(d)] \textit{(weak associativity)} for all $A \in V$, $v \in M$, 
     there exists $N\in\mathbb{Z}_{\geq 0}$ (depending only on $A$ and~$v$) such that for all $B \in V$ one has
\[
(w+z)^N \left( Y^M(Y(A,w)B,z)- Y^M(A,w+z)Y^M(B,z)\right)v =0;
\]
    \item[(e)] \textit{(conformal structure)} the Fourier coefficients of $Y^M(\omega,z) = \sum_{i \in \mathbb{Z}} \omega_{(i)}^M z^{-i-1}$ satisfy the Virasoro relation
    \[
    \left[\omega^M_{(p+1)}, \omega^M_{(q+1)}\right] = (p-q) \,\omega^M_{(p+q+1)} + \frac{c}{12} \,\delta_{p+q,0} \,(p^3-p) \,\textrm{id}_M, 
    \] 
	where $c \in \mathbb{C}$ is the central charge of $V$.
\end{enumerate}

An \textit{admissible $V$-module}  $\left(M,Y^M\right)$ is a weak $V$-module additionally satisfying:
\begin{enumerate}
\item[(f)] $M=\oplus_{i\geq 0}M_i$ is $\mathbb{Z}_{\ge 0}$-graded, and for $A\in V$ homogeneous  and $i \in \mathbb{Z}$, one has
\begin{equation} \label{ModShift}
    A^M_{(i)} M_k \subseteq M_{k + \deg (A) -i-1}.
\end{equation}
\end{enumerate}

In this paper, we  work with admissible $V$-modules $M$ which satisfy the following conditions:
\begin{enumerate}
\item[(g)] $\dim M_i < \infty$, for all $i$;
\item[(h)] $L_0$ acts semisimply on $M$.
\end{enumerate}
For example, assumptions (g) and (h) are satisfied by finitely generated admissible modules over a \textit{rational} vertex operator algebra: indeed, when $V$ is rational, finitely generated admissible $V$-modules decompose as finite sums of simple $V$-modules, and simple $V$-modules satisfy (h) --- since they are \textit{ordinary} modules --- and (g)  (see \cite{DongLiMasonTwisted}, \cite[Rmk 2.4]{DongLiMasonRegular}). 

We note that it has been shown that weak associativity and  weak commutativity are equivalent to the \textit{Jacobi identity} (see for instance \cite{dl}, \cite{fhl},  \cite{lepli}, \cite{lvertexsuperalg}). 

Observe that condition (e) above implies that the Virasoro algebra acts on  $M$ by identifying $\omega^M_{(p+1)} \in \textrm{End}(M)$ with $L_p$ and  $c\cdot \textrm{id}\in \textrm{End}(M)$ with $K\in \operatorname{Vir}$.  

After \cite[Thm 3.5.4]{lepli} or \cite[\S 3.2.1]{bzf}, $V$ satisfies  weak associativity. In particular, $V$ is a $V$-module.

\subsubsection{}
\label{cd}
A  $V$-module $M$ is \textit{simple} if the only $V$-submodules of $M$ are itself and the $0$ module. 
The \textit{conformal dimension}  of a simple $V$-module $M$ is defined as the value $a\in \mathbb{C}$ such that
$L_0 v=(a+\deg v)v$, for homogeneous $v\in M$.

\subsection{Lie groups and Lie algebras} 
\label{functors}
Here we define a number of Lie groups and their associated Lie algebras related to automorphisms and derivations of $R\llbracket z\rrbracket$ and $R(\!( z)\!)$ for a $\mathbb{C}$-algebra $R$ and a formal variable $z$.
The topology of $R(\!( z)\!)$ is defined by taking as the basis of open neighborhoods of $0$ the subspaces $z^N R\llbracket z\rrbracket$, for $N \in \mathbb{Z}$. When we restrict to $N \geq 0$, this defines the topology of $R\llbracket z\rrbracket$.

To begin with, we consider the  group functor  represented by a group ind-scheme denoted $\underline{\textrm{Aut}}\,\mathcal{O}$ \cite[\S 6.2.3]{bzf}: this functor assigns to  $R$ the  group of continuous automorphisms of $R\llbracket z\rrbracket$:
\[
R\mapsto\left\{z\mapsto\rho(z)=a_0+a_1z+a_2z^2+\cdots \Big| \begin{array}{l}  a_i \in R, \, a_1 \mbox{ a unit},\\ a_0 \mbox{ nilpotent}
  \end{array}\right\}.
\]
 The group $\underline{\textrm{Aut}}\,\mathcal{O}(R)$ parametrizes topological generators of $R \llbracket z \rrbracket$, that is, elements $t\in R \llbracket z \rrbracket$ such that $R \llbracket z \rrbracket \cong R \llbracket t \rrbracket$.

Similarly, we consider the functor which assigns to $R$ the group of continuous automorphisms of $R\llbracket z\rrbracket$ preserving the ideal $zR\llbracket z\rrbracket$:
\[
R\mapsto\left\{z\mapsto\rho(z)=a_1z+a_2z^2+\cdots \Big| \begin{array}{l}  a_i \in R, \\  a_1 \mbox{ a unit}
  \end{array}\right\}.
\]
This functor is represented by a group scheme denoted ${\textrm{Aut}}\,\mathcal{O}$. The group ${\textrm{Aut}}\,\mathcal{O}(R)$ parametrizes topological generators of $z R \llbracket z \rrbracket$, that is, elements $t \in R \llbracket z \rrbracket$ such that $z R \llbracket z \rrbracket \cong t R \llbracket t \rrbracket$.   

Let $\textrm{Aut}_+\mathcal{O}$ be the group scheme whose Lie group of $R$-points is 
\[
\textrm{Aut}_+\mathcal{O}(R)=\left\{z\mapsto\rho(z)=z+a_2z^2+\cdots \,|\, a_i \in R \right\}.
\]

Finally, consider  the functor which assigns to $R$ the group of continuous automorphisms of~$R(\!( z)\!)$:
\[
R\mapsto\left\{z\mapsto\rho(z)=\sum_{i\geq i_0} a_i z^i \,\, \Bigg| \begin{array}{l}  a_i \in R, \, a_1 \mbox{ a unit}, \\ a_i \mbox{ nilpotent for $i\leq 0$}
  \end{array}\right\}.
\]
This group functor is represented by a group ind-scheme $\textrm{Aut}\,\mathcal{K}$ \cite[\S 17.3.4]{bzf}.

One has the following inclusions:
\[\textrm{Aut}_+\mathcal{O}(R) \subseteq \textrm{Aut}\,\mathcal{O}(R) \subseteq \underline{\textrm{Aut}}\,\mathcal{O}(R) \subseteq \textrm{Aut}\,\mathcal{K}(R).
\]

We will also consider the associated Lie algebras, obtained as the tangent space at the identity:
\begin{align*}
\textrm{Der}_+ \mathcal{O} &:= \textrm{Lie}(\textrm{Aut}_+\mathcal{O}), &
\textrm{Der}_0\, \mathcal{O} &:= \textrm{Lie}(\textrm{Aut}\,\mathcal{O}), \\
\textrm{Der}\, \mathcal{O} &:= \textrm{Lie}(\underline{\textrm{Aut}}\,\mathcal{O}), &
\textrm{Der}\, \mathcal{K} &:= \textrm{Lie}(\textrm{Aut}\,\mathcal{K}),
\end{align*}
whose $R$-points are given by:
\begin{align*}
\textrm{Der}_+ \mathcal{O}(R) &= z^2R\llbracket z\rrbracket \partial_z, &
\textrm{Der}_0\,\mathcal{O}(R) & =  zR\llbracket z\rrbracket \partial_z,\\
\textrm{Der}\,\mathcal{O}(R) & =R\llbracket z\rrbracket \partial_z, &
\textrm{Der}\,\mathcal{K}(R) & =R(\!( z)\!) \partial_z.
\end{align*}

The $2$-cocycle defining the bracket of the Virasoro algebra  $c(p,q):= \frac{1}{12} (p^3-p)\delta_{p+q,0}$ vanishes for $p,q\geq -1$, hence
the Lie algebras 
$\textrm{Der}\, \mathcal{O}(R)$, 
$\textrm{Der}_0\, \mathcal{O}(R)$, 
and $\textrm{Der}_+ \mathcal{O}(R)$ 
are Lie subalgebras of the Virasoro algebra $\textrm{Vir}(R)$.

\subsubsection{}
Observe that although one has the equalities $$\textrm{Aut}\,\mathcal{O}(\mathbb{C})=\underline{\textrm{Aut}}\,\mathcal{O}(\mathbb{C})=\textrm{Aut}\,\mathcal{K}(\mathbb{C}),$$  the $\mathbb{C}$-points of the associated Lie algebras have strict inclusions 
$$z\mathbb{C}\llbracket z\rrbracket \partial_z \subset \mathbb{C}\llbracket z\rrbracket \partial_z \subset \mathbb{C}(\!( z)\!) \partial_z.$$
For instance, the tangent vector $\partial_z$ in $\textrm{Der}\,\mathcal{O}(\mathbb{C})$ is the differential of an automorphism which is not detected over $\mathbb{C}$.

\subsection{The action of $\textup{\rm Aut}_+\mathcal{O}$ on $V$-modules}
\label{Action}
Let $V$ be a vertex operator algebra, and let $M$ be a  $V$-module.
The action of $\textrm{Vir}$ induces actions of its Lie subalgebras $\textrm{Der}\, \mathcal{O}$, $\textrm{Der}_0\, \mathcal{O}$, and $\textrm{Der}_+ \mathcal{O}$  on~$M$.

The Lie algebra $\textrm{Der}_+ \mathcal{O}$ is generated by $L_p$ with $p> 0$. After \eqref{degAi}, each operator $L_p$ with $p> 0$ has degree $-p< 0$.
Since the gradation on $M$ is bounded from below, that is, $M_i=0$ for $i<\!<0$, the action of $\exp (L_p)$ is a finite sum, for $p>0$, hence 
the action of $\textrm{Der}_+ \mathcal{O}=\textrm{Lie}(\textrm{Aut}_+\mathcal{O})$ can be exponentiated to a left action of $\textrm{Aut}_+\mathcal{O}$ on $M$. 
Moreover, each $M_{\leq i} := \oplus_{m\leq i} \, M_m$ is a finite-dimensional $(\textrm{Aut}_+\mathcal{O})$-submodule of $M$.
The representation of $\textrm{Aut}_+\mathcal{O}$ on $M$ is the inductive limit of the representations $M_{\leq i}$.

\subsection{The Lie algebra $\mathfrak{L}(V)$ associated to a vertex algebra $V$}
\label{UV}
Given a vertex algebra $V$, define $\mathfrak{L}(V)$ as the quotient
\begin{align}
\label{LV}
\mathfrak{L}(V) := \big( V\otimes \mathbb{C}(\!(t)\!) \big) \big/ \textrm{Im}\, \partial
\end{align}
where $\partial:= L_{-1}\otimes \textrm{Id}_{\mathbb{C}(\!(t)\!)} +  \textrm{Id}_V \otimes \partial_t$.
Denote by $A_{[i]}$ the projection in $\mathfrak{L}(V)$ of $A\otimes t^i\in V\otimes \mathbb{C}(\!(t)\!)$.
The quotient $\mathfrak{L}(V)$ is a Lie algebra, with Lie bracket induced by
\[
\left[A_{[i]}, B_{[j]} \right] := \sum_{k\geq 0} {i \choose k} \left(A_{(k)}\cdot B \right)_{[i+j-k]}.
\]
The axiom on the vacuum vector $|0\rangle$ implies that $|0\rangle _{[-1]}$ is central in 
$\mathfrak{L}(V)$. 

One has a Lie algebra homomorphism $\mathfrak{L}(V)\rightarrow \textrm{End}(V)$: the element $A_{[i]}$ is mapped to the Fourier coefficient $A_{(i)}$ of the vertex operator $Y(A,z)=\sum_i A_{(i)}z^{-i-1}$. 
More generally, $\mathfrak{L}(V)$ is spanned by series of type $\sum_{i\geq i_0} f_i \, A_{[i]}$, for $A\in V$, $f_i\in\mathbb{C}$, and $i_0\in \mathbb{Z}$; the series $\sum_{i\geq i_0} f_i \, A_{[i]}$ maps to
\[
\textrm{Res}_{z=0}\,\, Y(A,z)\sum_{i\geq i_0} f_i z^i dz
\]
in $\textrm{End}(V)$ \cite[\S 4.1]{bzf}.
This defines an action of $\mathfrak{L}(V)$ on $V$.

Similarly to \S\ref{functors}, one can extend $\mathfrak{L}(V)$ to a functor of Lie algebras assigning to a $\mathbb{C}$-algebra $R$ the Lie algebra $( V\otimes R(\!(t)\!) ) / \textrm{Im}\, \partial$.

The Lie algebra $\mathfrak{L}(V)$ has a Lie subalgebra isomorphic to the Virasoro algebra: namely, the subalgebra generated by the elements 
\begin{equation} \label{eq:VirinLV}
c\cdot  |0\rangle _{[-1]} \cong K \quad\mbox{and}\quad \omega_{[p]} \cong L_{p-1}, \quad \mbox{for }p\in \mathbb{Z},
\end{equation}
where $|0\rangle$ and $\omega$ are the vacuum and the conformal vector, respectively, and $c$ is the central charge of $V$. 
Via the above identification and the axiom on the vacuum vector, the central element $K\in \mathfrak{L}(V)$ acts on $V$ as multiplication by $c$. The action of $\mathfrak{L}(V)$ on $V$  extends then the action of $\textrm{Vir}$ on $V$.

In particular, $\mathfrak{L}(V)$ has Lie subalgebras isomorphic to $\textrm{Der}\, \mathcal{O}$, $\textrm{Der}_0\, \mathcal{O}$, and $\textrm{Der}_+ \mathcal{O}$, as these are Lie subalgebras of the Virasoro algebra. 
   
Given a  $V$-module $M$, there is a Lie algebra homomorphism $\mathfrak{L}(V)\rightarrow \textrm{End}(M)$: the element $A_{[i]}$ is mapped to the Fourier coefficient $A^M_{(i)}$ of the vertex operator $Y^M(A,z)$.
This defines an action of $\mathfrak{L}(V)$ on $M$ extending the action of $\textrm{Vir}$ on $M$.

\subsection{Compatibility of actions of $\mathfrak{L}(V)$ and $\textup{\rm Aut}_+\mathcal{O}$ on $V$-modules}
\label{HCcompM}
The left action of $\textrm{Aut}_+\mathcal{O}$ on $M$ gives rise to a right action by $v\cdot \rho :=\rho^{-1}\cdot v$, for $\rho\in\textrm{Aut}_+\mathcal{O}$ and $v\in M$. 
The action of $\mathfrak{L}(V)$ on $M$ induces 
an anti-homomorphism of Lie algebras 
\[
\alpha_M \colon \mathfrak{L}(V) \rightarrow \textrm{End}(M)
\]
(that is, $-\alpha_M$ is a homomorphism of Lie algebras).
The \textit{anti-homomorphism} $\alpha_M$ is compatible with the \textit{right} action of $\textrm{Aut}_+\mathcal{O}$ on $M$ in the following sense:

\begin{enumerate}

\item the restriction of $\alpha_M$ to $\textrm{Der}_+ \mathcal{O}$ coincides with the differential of the right action of $\textrm{Aut}_+\mathcal{O}$ on $M$ (equivalently, the restriction of $-\alpha_M$ to $\textrm{Der}_+ \mathcal{O}$ coincides with the differential of the left action of $\textrm{Aut}_+\mathcal{O}$ on $M$);

\item for each $\rho\in \textrm{Aut}_+\mathcal{O}$, 
the following diagram commutes
\[
\begin{tikzcd}
\mathfrak{L}(V) \arrow[rightarrow]{r}{\alpha_M} \arrow[rightarrow, swap]{d}{\textrm{Ad}_\rho} & \textrm{End}(M) \arrow[rightarrow]{d}{\rho_*}\\
\mathfrak{L}(V) \arrow[rightarrow]{r}{\alpha_M} & \textrm{End}(M) .
\end{tikzcd}
\]
Here, $\textrm{Ad}$ is the adjoint representation of $\textrm{Aut}_+\mathcal{O}$ on $\mathfrak{L}(V)$ induced from
\[
\textrm{Ad}_\rho \left( \sum_{i\in \mathbb{Z}} A_{[i]}z^{-i-1} \right) = \sum_{i\in \mathbb{Z}} \left( \rho_z^{-1}\cdot A \right)_{[i]} \rho(z)^{-i-1}
\]
with $\rho_z(t):=\rho(z+t)-\rho(z)$ and $A \in V$.
Finally, $ \rho_* (-) :=\rho^{-1}\,(-)\,\rho$.

\end{enumerate}

Property (i) follows from the definition of the action of $\textrm{Aut}_+\mathcal{O}$ on $M$ as integration of the action of $\textrm{Der}_+\mathcal{O}$ on $M$. 
Property (ii) is due to Y.-Z.~Huang \cite[Prop.~7.4.1]{yzhuang}, \cite[\S 17.3.13]{bzf}:
\[
\rho^{-1}Y^M(A,z) \rho = Y^M( \rho_z^{-1}\cdot A, \rho(z)), \quad \mbox{for $A\in V$}.
\]


\section{Virasoro uniformization for stable coordinatized curves}
\label{AutBundles}
We present here the \textit{Virasoro uniformization}. The statement is in \cite{tuy}, following prior work in \cite{adkp}, \cite{besh}, \cite{kvir}. For completeness, we prove the result which we use here, extending to families of stable curves with singularities the argument for families of smooth curves given in \cite{bzf}.

Let $\widetriangle{\mathcal{M}}_{g,n}$ be the moduli space  parametrizing objects  $(C, P_\bullet=(P_1,\ldots,P_n), t_\bullet=(t_1,\ldots,t_n))$,  where $(C,P_\bullet)$ is a stable $n$-pointed curve of genus $g$, and $t_i$ is a formal coordinate at $P_i$. 
Let $(\mathcal{C}\rightarrow S, P_\bullet)$ be a versal family of stable pointed curves over a smooth base $S$, together with $n$ sections $P_i\colon S \rightarrow \mathcal{C}$ and formal coordinates~$t_i$ defined in a formal neighborhood of $P_i(S)\subset \mathcal{C}$. 
These data give rise to a moduli map $S\rightarrow \widetriangle{\mathcal{M}}_{g,n}$.
Assume that each irreducible component of $C_s$ contains at least one marked point $P_i(s)$, for all $s\in S$. This ensures that $\mathcal{C}\setminus P_\bullet(S)$ is affine. Let $\Delta$ be the divisor of singular curves in $S$. Here and throughout, $\mathcal{T}_{S}(-\log \Delta)$ is the sheaf of vector fields on $S$ preserving $\Delta$. 

\begin{theorem}[Virasoro uniformization \cite{adkp}, \cite{besh}, \cite{kvir}, \cite{tuy}]
\label{Virunif}
With notation as above, there exists an  anti-homo\-mor\-phism of Lie algebras 
\[
\alpha\colon \left(\textup{Der}\, \mathcal{K}(\mathbb{C})\right)^{\oplus n} \,\widehat\otimes_\mathbb{C}\, H^0(S,\mathcal{O}_S)\rightarrow H^0\left(S, \mathcal{T}_{S}(-\log \Delta)\right)
\]
(that is, $-\alpha$ is a homomorphism of Lie algebras)
extending $\mathcal{O}_{S}$-linearly to a surjective anti-homo\-mor\-phism of Lie algebroids (see \S \ref{LieAlgebroids}), called the \textit{anchor map}
\begin{equation}
\label{anchor}
\begin{tikzcd}
a\colon \left(\textup{Der}\, \mathcal{K}(\mathbb{C})\right)^{\oplus n} \,\widehat\otimes_\mathbb{C}\, \mathcal{O}_{S} \arrow[two heads]{r} & \mathcal{T}_{S} (-\log \Delta).
\end{tikzcd}
\end{equation}
The action of $(\textup{Der}\, \mathcal{K})^n$ on $\widetriangle{\mathcal{M}}_{g,n}$ induced from $\alpha$ is compatible with the action of $(\textup{Aut}\,\mathcal{O})^n$ on the fibers of $\widetriangle{\mathcal{M}}_{g,n}\rightarrow \overline{\mathcal{M}}_{g,n}$. The kernel of the anchor map $a$ is the subsheaf whose fiber at a point $(C, P_\bullet, t_\bullet)$ in $S$ is the Lie algebra $\mathscr{T}_C(C\setminus P_\bullet)$ of regular vector fields on $C\setminus P_\bullet$. 
\end{theorem}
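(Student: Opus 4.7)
The plan is to adapt the Kodaira--Spencer argument of \cite{bzf} (which handles families of smooth coordinatized curves and lands in $\mathcal{T}_S$) to the stable case, where singular fibers force us to work with the logarithmic tangent $\mathcal{T}_S(-\log\Delta)$. The geometric input is that, under our hypothesis that every irreducible component of $\mathcal{C}_s$ contains a marked point, $\mathcal{C} \setminus P_\bullet(S)$ is affine, and $\mathcal{C}$ is recovered from the descent datum gluing the affine open $U := \mathcal{C} \setminus P_\bullet(S)$ to $\sqcup_i \widehat{D}_i$, the disjoint union of formal neighborhoods of the sections $P_i(S)$, along the punctured formal neighborhoods $\sqcup_i \widehat{D}_i^\times \cong \sqcup_i \operatorname{Spec} \mathcal{O}_S(\!(t_i)\!)$.

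To construct $\alpha$, I interpret an element $(v_1,\ldots,v_n) \in (\textup{Der}\,\mathcal{K}(\mathbb C))^{\oplus n} \widehat\otimes H^0(S,\mathcal{O}_S)$ as an infinitesimal change of the gluing data above: each $v_i$ produces an infinitesimal automorphism of $\widehat{D}_i^\times$ over $S$, and reglueing $U$ to $\sqcup_i\widehat{D}_i$ via this modified identification yields a first-order deformation of $(\mathcal{C}, P_\bullet, t_\bullet)$ over $S[\varepsilon]$. Classified by the moduli map $S \to \widetriangle{\mathcal{M}}_{g,n}$, this produces the claimed global vector field $\alpha(v_1,\ldots,v_n)$ on $S$. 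I then extend $\mathcal{O}_S$-linearly to the anchor $a$ of \eqref{anchor}. Surjectivity follows from the standard Čech-cohomological computation of $T\widetriangle{\mathcal{M}}_{g,n}$: any first-order deformation of a stable coordinatized curve is locally trivial on the affine $U$ and on the $\widehat{D}_i$, hence determined by the change of gluing, which is precisely an element of $(\textup{Der}\,\mathcal{K})^{\oplus n}$. The fact that $-\alpha$ (rather than $\alpha$) is a Lie algebra homomorphism is forced by the usual sign convention: left actions on the regluing chart translate to right actions on the moduli, exactly as in \cite[\S 17.3]{bzf}.

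For the remaining assertions, I first argue that the image actually lies in $\mathcal{T}_S(-\log\Delta)$. The nodes of the fibers $\mathcal{C}_s$ lie in the interior of $U$, far from $P_\bullet(S)$, so the regluing deformation encoded by $(v_i)$ does not alter the singular locus: the deformed family has the same topological type of each fiber, and therefore the induced vector field on $S$ is tangent to every stratum of $\Delta$, which is exactly what $\mathcal{T}_S(-\log\Delta)$ parametrizes. For the kernel, the regluing deformation is trivial if and only if the prescribed boundary vector field on $\sqcup_i \widehat{D}_i^\times$ is the restriction of a (global) vector field on $U$, in which case the modified gluing is isomorphic to the original one via that vector field; this identifies $\ker a$ with $\mathscr{T}_C(C\setminus P_\bullet) \hookrightarrow (\textup{Der}\,\mathcal{K}(\mathbb{C}))^{\oplus n}\widehat\otimes\mathcal{O}_S$ via restriction to the punctured formal discs. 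Compatibility with the fiberwise action of $(\textup{Aut}\,\mathcal{O})^n$ reduces to the classical statement that conjugation of $\textup{Der}\,\mathcal{K}$ by a change of formal coordinate corresponds to the $(\textup{Aut}\,\mathcal{O})^n$-action on the fibers of $\widetriangle{\mathcal{M}}_{g,n}\to\overline{\mathcal{M}}_{g,n}$, cf.\ \S\ref{HCcompM}.

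The main obstacle is the singular case: in \cite{bzf} the fibers are smooth, so $\mathscr{T}_C(C\setminus P_\bullet)$ is the ordinary tangent sheaf of a smooth affine curve, and the image of the anchor is all of $\mathcal{T}_S$. When $\mathcal{C}_s$ has nodes, one must (a) use the correct intrinsic tangent sheaf $\mathcal{T}_C = \mathcal{H}om(\omega_C, \mathcal{O}_C)$ on the nodal curve, whose sections automatically vanish at the nodes, to make sense of $\mathscr{T}_C(C\setminus P_\bullet)$, and (b) verify precisely that the cokernel of $\mathscr{T}_C(C\setminus P_\bullet) \to (\textup{Der}\,\mathcal{K})^{\oplus n}\widehat\otimes\mathcal{O}_S$ is $\mathcal{T}_S(-\log\Delta)$ rather than $\mathcal{T}_S$. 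The clean way to do this is to compare the local-to-global spectral sequence for $\mathcal{T}_\mathcal{C}(-\log P_\bullet)$ on the total space with the logarithmic Kodaira--Spencer map for the family $(\mathcal{C},P_\bullet)\to S$, whose target is well-known to be $\mathcal{T}_S(-\log\Delta)$; this last ingredient is the only thing beyond a routine transcription of the smooth-case argument.
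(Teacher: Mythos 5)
Your proposal follows essentially the same route as the paper: $\alpha$ is obtained as the differential of the regluing action of $(\mathrm{Aut}\,\mathcal{K})^n$ on coordinatized stable curves, surjectivity onto $\mathcal{T}_S(-\log\Delta)$ comes from the affineness of $C\setminus P_\bullet$ together with the observation that regluing preserves the topological type of the fibers, the kernel is identified with vector fields extending regularly over $C\setminus P_\bullet$, and compatibility with $(\mathrm{Aut}\,\mathcal{O})^n$ reduces to the conjugation formula of \S\ref{HCcompM}. One minor slip worth correcting: the intrinsic tangent sheaf of a nodal curve whose sections automatically vanish at the nodes is $\mathcal{H}om(\Omega_C,\mathcal{O}_C)=\mathrm{Der}(\mathcal{O}_C)$, not $\mathcal{H}om(\omega_C,\mathcal{O}_C)$ (the latter is the locally free inverse of the dualizing sheaf and does not vanish at the nodes); this does not affect the structure of your argument.
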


Here and throughout, $\widehat\otimes$ is the completion of the usual tensor product with respect to the $t$-adic topology of $\textrm{Der}\, \mathcal{K}(\mathbb{C})=\mathbb{C}(\!(t)\!)\partial_t$. In particular, one has
\[
\left(\textup{Der}\, \mathcal{K}(\mathbb{C})\right)^{\oplus n} \,\widehat\otimes_\mathbb{C}\, H^0(S,\mathcal{O}_S) =  
\left(\textup{Der}\, \mathcal{K}\left(H^0(S,\mathcal{O}_S)\right)\right)^{\oplus n}
\] 
and
$ \left(\textup{Der}\, \mathcal{K}(\mathbb{C})\right)^{\oplus n} \,\widehat\otimes_\mathbb{C}\, \mathcal{O}_{S} = \oplus_{i=1}^n \mathcal{O}_{S}(\!( t_i)\!)\partial_{t_i}$.

Our first application of Theorem \ref{Virunif} is to show that sheaves  of $V$-modules  over a curve 
 carry a natural flat logarithmic connection (Propositions \ref{propMCJconnection} and \ref{propVconnection}). 
The connection for the sheaf $\mathscr{V}_{C}$ will be used  in the definition of the Lie algebra $\mathscr{L}_{C\setminus P_\bullet}(V)$ in \eqref{UCminusPV}. 
These results were known for smooth curves.

The structure of $\widetriangle{\mathcal{M}}_{g,n}$ is summarized in Figure \ref{fig:Moduliofcoordinatizedcurves} (see \S \ref{triangleMdef}).  The proof of Theorem \ref{Virunif} is contained in the remaining part of the section: the map $\alpha$ is described in \S \ref{AlphaDes}, compatibility  in \S \ref{Compat}.  
The space $\widetriangle{\mathcal{M}}_{g,n}$ is a principal $({\textrm{Aut}}\,\mathcal{O})^n$-bundle over $\overline{\mathcal{M}}_{g,n}$.
We will also need the intermediate principal bundle $\overline{\mathcal{J}}_{g,n}^{1,\times}$, which is the moduli space of elements of type $(C, P_\bullet, \tau_\bullet=(\tau_1,\dots,\tau_n))$ such that $\tau_i$ is a non-zero $1$-jet of a formal coordinate at~$P_i$ (see \S \ref{JetModuli}). 
In \S \ref{VUProof}, we describe the restriction of the anchor map to a curve $C$ and discuss the uniformization of $\mathscr{A}ut_C$, the fiber of  $\widetriangle{\mathcal{M}}_{g,1}\rightarrow \overline{\mathcal{M}}_{g}$ over $C$.  

Theorem \ref{Virunif} is extended by Theorem \ref{nVirAt}, which provides an action of the Virasoro algebra on the Hodge line bundle on $\widetriangle{\mathcal{M}}_{g,n}$. This will be a key ingredient in the proof of the main Theorem.

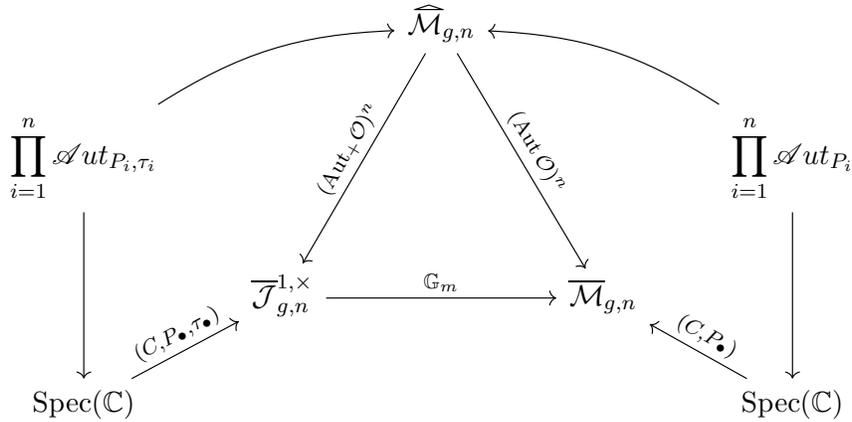
\begin{figure}[htb]
\centering
\begin{tikzcd}
{}&& \widetriangle{\mathcal{M}}_{g,n} \ar[rightarrow]{ddl}[sloped, pos=0.5]{(\textrm{Aut}_+\mathcal{O})^n} \ar[rightarrow]{ddr}[sloped, pos=0.5]{(\textrm{Aut}\, \mathcal{O})^n} &&\\
{\displaystyle \prod_{i=1}^n}\,\mathscr{A}ut_{P_i,\tau_i} \ar[rightarrow]{dd} \ar[rightarrow, bend left=15]{urr} &&&& {\displaystyle \prod_{i=1}^n}\,\mathscr{A}ut_{P_i} \ar[rightarrow]{dd} \ar[rightarrow, bend left=-15]{ull}\\
{}& \overline{\mathcal{J}}_{g,n}^{1,\times} \ar[rightarrow]{rr}{\mathbb{G}_m} && \overline{\mathcal{M}}_{g,n} &\\
\text{Spec}(\mathbb{C})\ar[rightarrow]{ur}[sloped, pos=0.5]{(C,P_\bullet,\tau_\bullet)}  &&&& \text{Spec}(\mathbb{C})  \ar[rightarrow]{ul}[sloped, pos=0.5]{ (C,P_\bullet)}
\end{tikzcd}
   \caption{The structure of moduli spaces of coordinatized curves.}
   \label{fig:Moduliofcoordinatizedcurves}
\end{figure}

\subsection{Lie algebroids}\label{LieAlgebroids} Following \cite{bb}, we briefly review the definition 
of a Lie algebroid, referred to here.  A logarithmic version is used in \S \ref{projconnCJ} and \S \ref{ProjConnVM}.  Let $S$ be a scheme over $\mathbb{C}$.  A \textit{Lie algebroid} $\mathcal{A}$ over $S$ is a quasi-coherent $\mathcal{O}_S$-module together with a $\mathbb{C}$-linear bracket $[\cdot , \cdot ] \colon \mathcal{A}\otimes_{\mathbb{C}} \mathcal{A} \rightarrow  \mathcal{A}$ and an  $\mathcal{O}_S$-module homomorphism $a \colon \mathcal{A}  \rightarrow \mathscr{T}_S$ called the \textit{anchor map}, for which: (i)
$a$ is a Lie algebra homomorphism, and (ii) $[x_1,fx_2]=f[x_1,x_2]+(a(x_1)\cdot f)x_2$, for $x_1$ and $x_2$ in $\mathcal{A}$ and $f \in \mathcal{O}_S$.  
The tangent sheaf with the identity anchor map is the simplest example of a Lie algebroid.

\subsection{Description of $\widetriangle{\mathcal{M}}_{g,n}$}
\label{triangleMdef}
The space $\widetriangle{\mathcal{M}}_{g,n}$ is a principal $({\textrm{Aut}}\,\mathcal{O})^n$-bundle over $\overline{\mathcal{M}}_{g,n}$.
Indeed, consider the forgetful map  $\widetriangle{\mathcal{M}}_{g,n}\rightarrow\overline{\mathcal{M}}_{g,n}$. When $n=1$, the fiber over a $\mathbb{C}$-point $(C,P)$ of $\overline{\mathcal{M}}_{g,1}$ 
 is the set of formal coordinates at~$P$, i.e.,  
\[
\mathscr{A}ut_P :=\left\{t \in \widehat{\mathscr{O}}_P \, | \, t(P)=0, \, (dt)_P\not= 0 \right\}.
\]
Here $\widehat{\mathscr{O}}_P$ is the completed local ring at the point $P$; after choosing a formal coordinate $t$ at $P$, one has $\widehat{\mathscr{O}}_P\cong \mathbb{C}\llbracket t\rrbracket$.
The set $\mathscr{A}ut_P$ admits a simply transitive right action of the group $\textrm{Aut}\,\mathcal{O}(\mathbb{C})$ by change of coordinates: 
\[
\mathscr{A}ut_P \times \textrm{Aut}\,\mathcal{O}(\mathbb{C}) \rightarrow \mathscr{A}ut_P, \qquad (t, \rho) \mapsto t\cdot \rho := \rho(t).
\]
Elements of $\textrm{Aut}\,\mathcal{O}(\mathbb{C})$ are power series $a_1 z+ a_2 z^2+\cdots$, such that $a_1\not=0$, and the group law is the composition of series: $\rho_1\cdot \rho_2= \rho_2\circ \rho_1$. Thus, $\mathscr{A}ut_P$ is an $(\textrm{Aut}\,\mathcal{O})$-torsor over a point.
More generally, $\widetriangle{\mathcal{M}}_{g,n}$ is an $(\textrm{Aut}\,\mathcal{O})^n$-torsor over  $\overline{\mathcal{M}}_{g,n}$.

\subsubsection{Description of $\alpha$}\label{AlphaDes}
The map $\alpha$ can be constructed as the differential of the right action of the group ind-scheme $(\textrm{Aut}\, \mathcal{K})^n$ on $\widetriangle{\mathcal{M}}_{g,n}$ preserving the divisor $\Delta$ (as in \cite[\S 17.3.4]{bzf}).
As  differential of a right action, the map $\alpha$ is an anti-homomorphism of Lie algebras, that is, 
$\alpha([l,m]) =$\allowbreak $-[\alpha(l), \alpha(m)]$.

The action is constructed as follows. The group ind-scheme $(\textrm{Aut}\, \mathcal{K})^n$ acts naturally on the punctured formal discs of the $n$ marked points on  stable $n$-pointed curves. Given a stable $n$-pointed curve $(C,P_\bullet)$ and an element $\rho\in (\textrm{Aut}\, \mathcal{K})^n$, 
one glues $C\setminus P_\bullet$ and the formal discs of the $n$ marked points by twisting with the automorphism $\rho$ of the $n$ punctured formal discs to obtain a stable pointed curve. 
The resulting curve has the same topological type of the starting curve; in particular, the action preserves the divisor $\Delta$.
This description of the action can be carried out in families, as in  \cite[\S 17.3.4]{bzf}.
As each irreducible component of a pointed curve $(C,P_\bullet)$ is assumed to have at least one marked point, the open curve $C\setminus P_\bullet$ is affine, as are the formal discs at the marked points.
Since smooth affine varieties have no non-trivial infinitesimal deformations, all infinitesimal deformations are obtained by the differential of the action of $(\textrm{Aut}\, \mathcal{K})^n$, hence the surjectivity of the anchor map $a$.

The map $\alpha$ also follows from a canonical map from $(\textrm{Der}\, \mathcal{K})^n$ to the space of tangent directions preserving $\Delta$ at any point $(C,P_\bullet, t_\bullet)$ in $\widetriangle{\mathcal{M}}_{g,n}$. The Lie subalgebra $(\textrm{Der}\, \mathcal{O})^n$ is canonically isomorphic to the  space of tangent directions preserving the nodes of $C$ along the fiber of the forgetful map $\widetriangle{\mathcal{M}}_{g,n}\rightarrow \overline{\mathcal{M}}_{g}$, as in \S\ref{alphaconstr}. Finally, 
the vector fields $t_i^p\partial_{t_i}$ with $p<0$ on the punctured disk around the point $P_i$ having a pole at $P_i$ correspond to  infinitesimal changes of the complex structure on the curve~$C$  preserving the topological type of $C$ (hence preserving $\Delta$).

\subsubsection{}
\label{Compat}
\label{HCcomp} 
The  action of $(\textrm{Der}\, \mathcal{K})^n$ on $\widetriangle{\mathcal{M}}_{g,n}$ via $\alpha$ is compatible with the right action of $(\textrm{Aut}\,\mathcal{O})^n$ along the fibers of 
$\widetriangle{\mathcal{M}}_{g,n} \rightarrow \overline{\mathcal{M}}_{g,n}$, that is:

\begin{enumerate}
\item the restriction of $\alpha$ to the Lie subalgebra  $(\textrm{Der}_0\,\mathcal{O})^n=\textrm{Lie}((\textrm{Aut}\,\mathcal{O})^n)$ of $(\textrm{Der}\, \mathcal{K})^n$ coincides with the differential of the right action of $(\textrm{Aut}\,\mathcal{O})^n$ along the fibers of the principal $(\textrm{Aut}\,\mathcal{O})^n$-bundle 
$\widetriangle{\mathcal{M}}_{g,n} \rightarrow \overline{\mathcal{M}}_{g,n}$; 
\item for each $\rho\in (\textrm{Aut}\,\mathcal{O})^n$, the following diagram commutes
\[
\begin{tikzcd}
\left(\textup{Der}\, \mathcal{K}(\mathbb{C})\right)^{\oplus n} \,\widehat\otimes_\mathbb{C}\, H^0(S,\mathcal{O}_S) \arrow[rightarrow]{r}{\alpha} \arrow[rightarrow, swap]{d}{\textrm{Ad}_\rho} & H^0\left(\widetriangle{\mathcal{M}}_{g,n}, \mathcal{T}_{\widetriangle{\mathcal{M}}_{g,n}} (-\log \Delta)\right) \arrow[rightarrow]{d}{\rho_*}\\
\left(\textup{Der}\, \mathcal{K}(\mathbb{C})\right)^{\oplus n} \,\widehat\otimes_\mathbb{C}\, H^0(S,\mathcal{O}_S) \arrow[rightarrow]{r}{\alpha} & H^0\left(\widetriangle{\mathcal{M}}_{g,n}, \mathcal{T}_{\widetriangle{\mathcal{M}}_{g,n}}(-\log \Delta)\right) .
\end{tikzcd}
\]
That is,
$\alpha (\textrm{Ad}_\rho (\cdot)) = \rho_* (\alpha(\cdot))$.
Here, $\textrm{Ad}$ is the adjoint representation of $(\textrm{Aut}\,\mathcal{O})^n$ on $(\textrm{Der}\, \mathcal{K})^n$. 
Moreover, $ \rho_* (\alpha(\cdot)) :=\rho^{-1}\, \alpha(\cdot)\,\rho$.
\end{enumerate}

\subsection{The moduli space $\overline{\mathcal{J}}_{g,n}^{1,\times}$}\label{JetModuli}
A $1$-jet at  a smooth point $P$ on a stable curve is a cotangent vector at $P$. This can be seen as an equivalence class of functions for the relation: $\tau\sim \sigma$ if and only if $\tau-\sigma \in \mathfrak{m}_P^2$,
for $\tau$ and $\sigma \in \widehat{\mathscr{O}}_P$, where $\mathfrak{m}_P$ is the maximal ideal of $\widehat{\mathscr{O}}_P$.  We say that $\tau$ is the $1$-jet of $t\in \widehat{\mathscr{O}}_P$ if $\tau$ is the equivalence class represented by $t$.
Let
\[
\mathscr{A}ut_{P,\tau}:=\{t \in \mathscr{A}ut_P \, | \, \tau \mbox{ is the $1$-jet of t} \}.
\]
This is an $(\textrm{Aut}_+\mathcal{O})$-torsor over a point, with  $\textrm{Aut}_+\mathcal{O}$ acting on the right by change of coordinates.
Recall that $\textrm{Aut}_+\mathcal{O}$ is the subgroup of $\textrm{Aut}\,\mathcal{O}$ of elements $\rho(z)=z+a_2 z^2+\cdots$.  One can show that $\textrm{Aut}\,\mathcal{O}= \mathbb{G}_m \ltimes \textrm{Aut}_+\mathcal{O}$.

Let  $\overline{\mathcal{J}}_{g,n}^{1,\times}$ be the moduli space of objects of type $(C, P_\bullet, \tau_\bullet)$, where $(C, P_\bullet)$ is an $n$-pointed, genus $g$ stable curve and $\tau_\bullet=(\tau_1,\dots,\tau_n)$ with each $\tau_i$  a non-zero $1$-jet of a formal coordinate at~$P_i$. The space $\overline{\mathcal{J}}_{g,n}^{1,\times}$ is a principal $(\mathbb{G}_m)^n$-bundle over $\overline{\mathcal{M}}_{g,n}$. 
Let $\Psi_i$ be the cotangent line bundle on $\overline{\mathcal{M}}_{g,n}$ corresponding to the $i$-th marked point.
We identify $\overline{\mathcal{J}}_{g,n}^{1,\times}$ with the product of the principal $\mathbb{C}^\times$-bundles $\Psi_i\setminus \{\mbox{zero section}\}$ over $\overline{\mathcal{M}}_{g,n}$, for $i=1,\dots,n$.

There is a natural map $\widetriangle{\pi}\colon\widetriangle{\mathcal{M}}_{g,n}\rightarrow \overline{\mathcal{J}}_{g,n}^{1,\times}$ obtained by mapping each local coordinate to its $1$-jet. This realizes $\widetriangle{\mathcal{M}}_{g,n}$ as a principal $(\textrm{Aut}_+\mathcal{O})^n$-bundle over $\overline{\mathcal{J}}_{g,n}^{1,\times}$. 
The  action of $(\textrm{Der}\, \mathcal{K})^n$ on $\widetriangle{\mathcal{M}}_{g,n}$ from \eqref{anchor} 
is compatible with the action of $ (\textrm{Aut}_+\mathcal{O})^n$ along the fibers of~$\widetriangle{\pi}$, as in \S\ref{HCcomp}.

\subsection{Coordinatized curves}
\label{CC}
Consider the composition of the projection $\widetriangle{\mathcal{M}}_{g,1}\rightarrow \overline{\mathcal{M}}_{g,1}$
with the forgetful morphism $\overline{\mathcal{M}}_{g,1} \to \overline{\mathcal{M}}_g$. We  define $\mathscr{A}ut_C$ as the fiber of the map $\widetriangle{\mathcal{M}}_{g,1}\rightarrow \overline{\mathcal{M}}_{g}$ over the point in $\overline{\mathcal{M}}_g$ corresponding to $C$ (Figure \ref{fig:AutCCJ}).
The bundle $\mathscr{A}ut_C$ is a principal $(\textrm{Aut}\,\mathcal{O})$-bundle on $C$ whose fiber at a smooth $P\in C$ is $\mathscr{A}ut_{P}$. 
The description of the fiber over a nodal point $P$ goes as follows: We replace the pair $(C,P)$  with its stable reduction $(C',P')$ and let the fiber of $\mathscr{A}ut_C$ over $P$ be the space $\mathscr{A}ut_{P'}$. Equivalently, if $P$ is a node, we replace $C$ with its partial normalization $C^N$ at $P$ and consider the two points $P_+$ and $P_-$ lying above $P$. The fibers over $P_+$ and $P_-$ are the spaces $\mathscr{A}ut_{P_+}$ and $\mathscr{A}ut_{P_-}$. 
If  locally  near the node $P$ the curve $C$ is given by the equation $s_+ s_- =0$, with $s_\pm$ a formal coordinate at $P_\pm$, we glue $\mathscr{A}ut_{P_+}$ and $\mathscr{A}ut_{P_-}$ via the isomorphism
\begin{equation}
\label{eq:gluingAut}
\mathscr{A}ut_{P_+}\cong \textrm{Aut}\,\mathcal{O} \xrightarrow{\cong} \textrm{Aut}\,\mathcal{O} \cong\mathscr{A}ut_{P_-}, \qquad \rho(s_+)\mapsto \rho\circ \gamma(s_-),
\end{equation}
where $\gamma\in \textrm{Aut}\,\mathcal{O}$ is 
\[
\gamma(z):= \frac{1}{1+z}-1 = -z +z^2 -z^3 + \cdots.
\]
The identification of $\mathscr{A}ut_{P_+}$ and $\mathscr{A}ut_{P_-}$ via this isomorphism defines the fiber of $\mathscr{A}ut_C$ at the node $P$.
The geometry motivating the choice of the  gluing isomorphism is explained in \cite[\S 2.2.2]{DGT2}.
The bundle $\mathscr{A}ut_C$ is locally trivial in the Zariski topology.

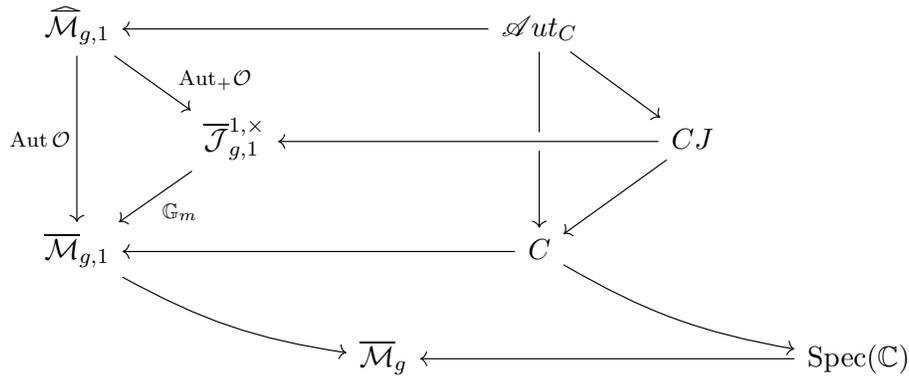
\begin{figure}[htb] 
\centering
\begin{tikzcd}
\widetriangle{\mathcal{M}}_{g,1} \ar[rightarrow, swap]{dd}{\textrm{Aut}\,\mathcal{O}} \ar[rightarrow, near end]{dr}{\textrm{Aut}_+\mathcal{O}} &&& \mathscr{A}ut_C \ar[rightarrow]{lll}  \ar[rightarrow]{dd} \ar[rightarrow]{dr} &&\\
& \overline{\mathcal{J}}_{g,1}^{1,\times} \ar[rightarrow]{dl}{\mathbb{G}_m} &&& CJ \ar[crossing over, rightarrow]{lll} \ar[rightarrow]{dl} &\\
\overline{\mathcal{M}}_{g,1} \ar[rightarrow, bend right=10]{drr} &&& C \ar[rightarrow]{lll} \ar[rightarrow, bend right=10]{drr} &&\\
&& \overline{\mathcal{M}}_{g} &&& \textrm{Spec}(\mathbb{C}) \ar[rightarrow]{lll}
\end{tikzcd}
   \caption{The definition of $\mathscr{A}ut_C$ and $CJ$. The curve $C$ is identified with the fiber of $\overline{\mathcal{M}}_{g,1}\rightarrow \overline{\mathcal{M}}_g$ over the moduli point $[C]\in \overline{\mathcal{M}}_g$.}
   \label{fig:AutCCJ}
\end{figure}

Similarly, let $CJ$ be the fiber of the forgetful map $\overline{\mathcal{J}}_{g,1}^{1,\times}\rightarrow \overline{\mathcal{M}}_{g}$ over the point in $\overline{\mathcal{M}}_g$ corresponding to the curve $C$. 
The space $CJ$ is a principal $\mathbb{G}_m$-bundle on $C$, whose points are pairs $(P,\tau)$, where $P$ is a point in the fiber of $\overline{\mathcal{M}}_{g,1}\rightarrow \overline{\mathcal{M}}_g$ over the moduli point $[C]\in \overline{\mathcal{M}}_g$, and $\tau$ is a non-zero $1$-jet of a formal coordinate at $P$.
Mapping a formal coordinate to its $1$-jet realizes $\mathscr{A}ut_C$ as a principal $(\textrm{Aut}_+ \mathcal{O})$-bundle on $CJ$, whose fibers at $(P,\tau)$ is $\mathscr{A}ut_{P,\tau}$, as pictured in Figure \ref{fig:AutBundles}. 

\begin{figure}[htb] 
\centering
\begin{tikzcd}
{}&& \mathscr{A}ut_C \ar[rightarrow]{ddl}[sloped, pos=0.5]{\textrm{Aut}_+\mathcal{O}} \ar[rightarrow]{ddr}[sloped, pos=0.5]{\textrm{Aut}\, \mathcal{O}} &&\\
\mathscr{A}ut_{P,\tau} \ar[rightarrow]{dd} \ar[rightarrow, bend left=15]{urr} &&&& \mathscr{A}ut_{P} \ar[rightarrow]{dd} \ar[rightarrow, bend left=-15]{ull}\\
{}& CJ \ar[rightarrow]{rr}{\mathbb{G}_m} && C &\\
(P,\tau) \ar[hookrightarrow]{ur} \ar[|->]{rrrr} &&&& P \ar[hookrightarrow]{ul}
\end{tikzcd}
   \caption{The structure of $\mathscr{A}ut_C$.}
   \label{fig:AutBundles}
\end{figure}
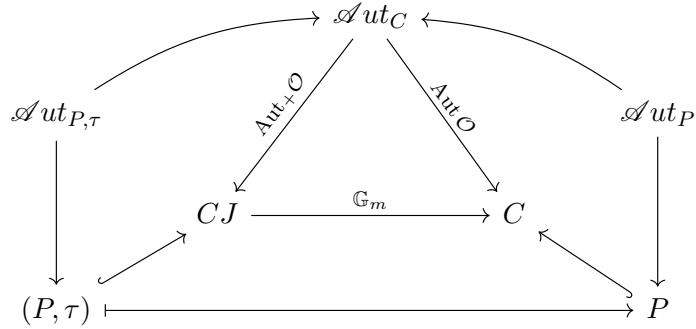

\subsection{Uniformization of $\mathscr{A}ut_C$}\label{VUProof}
\label{alphaC} 
As described in \cite[\S 17.1]{bzf}, the Lie algebra $\textrm{Der}\, \mathcal{O}$ has a simply transitive  action on the space $\mathscr{A}ut_C$ over a smooth curve~$C$. Here we discuss the action, and generalize it to the case of stable curves. Given a stable curve $C$, let $D$ be the divisor in $\mathscr{A}ut_C$
lying over the singular locus of $C$. Let $\mathscr{T}_{\mathscr{A}ut_C}(-\log D)$ be the sheaf of $\mathbb{C}$-linear derivations of $\mathscr{A}ut_C$ which preserve the ideal defining $D$. There is an anti-homomorphism of Lie algebras
\begin{equation}
\label{alphaCmap}
\alpha_C\colon \textrm{Der}\, \mathcal{O}(\mathbb{C}) \,\widehat\otimes_\mathbb{C}\, H^0(\mathscr{A}ut_C,\mathscr{O}_{\mathscr{A}ut_C})
\rightarrow H^0\left(\mathscr{A}ut_C, \mathscr{T}_{\mathscr{A}ut_C}(-\log D)\right) 
\end{equation}
described below in \S \ref{alphaconstr},
that extends $\mathscr{O}_{\mathscr{A}ut_C}$-linearly to an anti-isomorphism of Lie algebroids
\begin{equation}
\label{anchorAutC}
a_C\colon \textrm{Der}\, \mathcal{O}(\mathbb{C}) \,\widehat{\otimes}_\mathbb{C}\, \mathscr{O}_{\mathscr{A}ut_C}\xrightarrow{\sim} \mathscr{T}_{\mathscr{A}ut_C}(-\log D).
\end{equation}
The restrictions of the maps $\alpha$ and $a$ from Theorem \ref{Virunif} to the fiber of the forgetful map $\widetriangle{\mathcal{M}}_{g,1} \rightarrow \overline{\mathcal{M}}_{g}$ over a point $C$ in $\overline{\mathcal{M}}_{g}$ coincide with the maps $\alpha_C$ and $a_C$, respectively.

\subsubsection{Description of $\alpha_{C}$}
\label{alphaconstr}
A heuristic description of the map $\alpha_C$ as a bijection of vector spaces can be given by showing how to each element of $\textrm{Der}\, \mathcal{O}$ corresponds a vector field on $\mathscr{A}ut_C$ preserving the divisor $D$. Since a vector field is the choice of a tangent vector at each point, it is enough to assign to each element of $\textrm{Der}\, \mathcal{O}$ a tangent vector preserving $D$ at each point in $\mathscr{A}ut_C$. 

Consider the following diagrams: 
\[
\begin{tikzcd}
\mathscr{A}ut_C  \ar{d}{p} && \textrm{Aut}_+\mathcal{O} \ar{d} \ar{ll}[swap]{f}\\
CJ  && \text{Spec}(\mathbb{C}) \ar{ll}, 
\end{tikzcd}
\qquad \qquad 
\begin{tikzcd}
CJ  \ar{d}{q}  && \mathbb{G}_m \ar{d} \ar{ll}[swap]{h}\\
C && \text{Spec}(\mathbb{C}) \ar{ll}. 
\end{tikzcd}
\]
Since $\mathscr{A}ut_C$ (resp.,~$CJ$) is a principal $(\textrm{Aut}_+\mathcal{O})$-bundle (resp.,~$\mathbb{G}_m$-bundle), locally the two diagrams are cartesian. This implies that we can decompose the tangent sheaf of $\mathscr{A}ut_C$ and $CJ$ as the following direct sums
\begin{align*}
\mathscr{T}_{\mathscr{A}ut_C}(-\log D)  &= p^*\mathscr{T}_{CJ}(-\log D)  \oplus f_*\,\mathscr{T}_{\textrm{Aut}_+\mathcal{O}},\\
\mathscr{T}_{CJ}(-\log D)  &= q^*\mathscr{T}_C(-\log D)  \oplus h_*\,\mathscr{T}_{\mathbb{G}_m}.
\end{align*}
Observe that the elements of $\mathscr{T}_C$ are derivations of $\mathscr{O}_C$ which preserve the ideal defining the singular points, so we can rewrite the above equality as \begin{align*}
\mathscr{T}_{CJ}(-\log D)  &= q^*\mathscr{T}_C  \oplus h_*\,\mathscr{T}_{\mathbb{G}_m}.
\end{align*}

Combining the two decompositions, we obtain
\[
\mathscr{T}_{\mathscr{A}ut_C}(-\log D) = p^*q^*\mathscr{T}_C \oplus p^*h_*\,\mathscr{T}_{\mathbb{G}_m} \oplus f_*\,\mathscr{T}_{\textrm{Aut}_+\mathcal{O}}.
\]
From this characterization, it follows that the tangent space of $\mathscr{T}_{\mathscr{A}ut_C}$ at a point $(P,t)$, where $P$ is a smooth point of $C$, can be described as the sum 
\[
T_{C,P} \oplus T_{\mathbb{G}_m}\oplus T_{\textrm{Aut}_+\mathcal{O}} 
\]
which is isomorphic to 
\begin{equation}
\label{DecofTang}
\mathbb{C}\partial_t \oplus \mathbb{C}t \partial_t \oplus \textrm{Der}_+\mathcal{O}(\mathbb{C})= \text{Der}\,\mathcal{O}(\mathbb{C}) .
\end{equation}
The space $\mathbb{C} t\partial_t$ corresponds to infinitesimal changes of the $1$-jet $\tau$ of $t$ fixing the point $P$. In the same spirit, the space $\mathbb{C} \partial_t$ is identified with the tangent direction at $(P,t)$ corresponding to infinitesimal changes of the point $P$ on the curve.

Note that when $P$ is not smooth, we can replace $(C,P)$ with its stable reduction $(C',P')$ where $P'$ lies in a rational component of $C'$. Also in this case we obtain that the space is isomorphic to \eqref{DecofTang}.
In this case, however, infinitesimal changes of the point $P'$ on the rational component are identified by the automorphisms of the rational component, hence the tangent space $\mathbb{C} \partial_t$ is zero at $P'$. It follows that all tangent directions described above preserve the singular locus $D$ in $\mathscr{A}ut_C$. 

From this identification of $\textrm{Der}\, \mathcal{O}$ with the space of tangent directions preserving $D$ at any  point in $\mathscr{A}ut_C$, an element of $\textrm{Der}\, \mathcal{O}$ gives rise to a tangent vector at each point of $\mathscr{A}ut_C$, hence a vector field on $\mathscr{A}ut_C$ preserving $D$.

More precisely, the map $\alpha_C$ is given by taking the differential of the right action of the exponential of $\textrm{Der}\, \mathcal{O}$.
Recall that  the exponential of $\textrm{Der}\, \mathcal{O}$ is the group ind-scheme $\underline{\textrm{Aut}}\, \mathcal{O}$ (see \S\ref{functors}).
Consider the principal $(\underline{\textrm{Aut}}\, \mathcal{O})$-bundle 
\[
\underline{\mathscr{A}ut}\,_C :=\underline{\textrm{Aut}}\, \mathcal{O} \mathop{\times}_{\textrm{Aut}\, \mathcal{O}} \mathscr{A}ut_C
\]
on $C$. For a $\mathbb{C}$-algebra $R$, an $R$-point of $\underline{\mathscr{A}ut}\,_C$ is a pair $(P,t)$, where $P$ is an $R$-point of $C$, and $t$ is an element of $R\,\widehat{\otimes}\, \widehat{\mathscr{O}}_P$ such that there is a continuous isomorphism of algebras $R\,\widehat{\otimes}\,  \widehat{\mathscr{O}}_P \cong R\llbracket t \rrbracket$. The group ind-scheme $\underline{\textrm{Aut}}\, \mathcal{O}$ has a right action on $\underline{\mathscr{A}ut}\,_C$, and the differential of this action gives the map $\alpha_C$. The argument is presented in \cite[\S 17.1.3]{bzf} for smooth curves and generalizes to nodal curves. 


\section{$V$-module sheaves on curves and their  connections}
\label{Vmodbdles}
Here given a $V$-module $M$, we define sheaves  $\mathscr{M}_{C}$ over a curve $C$, and 
$\mathscr{M}_{CJ}$ over  $CJ$ (defined in \S\ref{CC}).    Considering $V$ as a module over itself, this gives rise to the vertex algebra sheaf $\mathscr{V}_{C}$ on a curve $C$.
We  show that
$\mathscr{M}_{CJ}$ and  $\mathscr{V}_{C}$ have  flat logarithmic connections (Propositions \ref{propMCJconnection} and \ref{propVconnection}). These results have been proved for smooth curves, and here we extend the results to stable curves with singularities using an instance of the Virasoro uniformization (Theorem \ref{Virunif}).

Proposition \ref{propVconnection} is used to define vector spaces of coinvariants and their sheaves  (see \S \ref{Coinvariants} and \S \ref{SheavesOfCoinvariants}). While one does not obtain a projectively flat connection on $V$-module sheaves in families, one does obtain a twisted logarithmic $\mathcal{D}$-module structure on certain quotients, giving rise to projectively flat connections on the sheaves of coinvariants (Theorems \ref{AtalgonCJ} and \ref{At2}).

The structure of a $V$-module bundle is summarized in Figure~\ref{figVmodbundles}.  
 
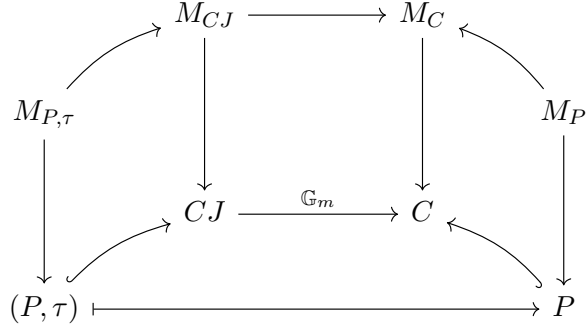
\begin{figure}[htb]
\centering
\begin{tikzcd}
{}& M_{CJ} \ar[rightarrow]{rr} \ar[rightarrow]{dd}{} && M_{C} \ar[rightarrow]{dd}{} &\\
M_{P,\tau} \ar[rightarrow]{dd} \ar[rightarrow, bend left=15]{ur} &&&& M_{P} \ar[rightarrow]{dd} \ar[rightarrow, bend left=-15]{ul}\\
{}& CJ \ar[rightarrow]{rr}{\mathbb{G}_m} && C &\\
(P,\tau) \ar[hookrightarrow, bend left=15]{ur} \ar[|->]{rrrr} &&&& P \ar[hookrightarrow, bend left=-15]{ul} 
\end{tikzcd}
   \caption{The structure of a $V$-module bundle.}
   \label{figVmodbundles}
\end{figure}

\subsection{The sheaf $\mathscr{M}_{CJ}$}
\label{Vmodbdle}
In \S \ref{Des} we describe the sheaves $\mathscr{M}_{CJ}$, and in \S \ref{conn} we define their flat logarithmic connection.

\subsubsection{Description}\label{Des}
Let $V$ be a vertex operator algebra, and $M=\oplus_{i\geq 0}M_i$ a  $V$-module. 
Let $C$ be a stable  curve. 
Consider the trivial vector bundle (of infinite rank when $M$ is infinite-dimensional)
\[ 
M_{\mathscr{A}ut_C} := \mathscr{A}ut_C \times \varinjlim_{i}\, M_{\leq i} = \varinjlim_{i}\,\mathscr{A}ut_C \times M_{\leq i}
\]
on $\mathscr{A}ut_C$. Here we still denote by $M_{\leq i}$ the finite-dimensional affine complex space associated with the $\mathbb{C}$-vector space $M_{\leq i}$.
 A $\mathbb{C}$-point of $M_{\mathscr{A}ut_C}$ is a triple $(P, t, m)$, where $(P,t)$ is a $\mathbb{C}$-point of the fiber of $\widetriangle{\mathcal{M}}_{g,1}\rightarrow \overline{\mathcal{M}}_{g}$ over the moduli point $[C]\in \overline{\mathcal{M}}_{g}$, and $m$ is an element of the module $M$. Observe that for an infinite-dimensional vector space $M$, the vector bundle $M_{\mathscr{A}ut_C}$ is not  an affine scheme over $\mathscr{A}ut_C$, but an ind-scheme. By abuse of notation, we will simply write
\[
M_{\mathscr{A}ut_C} := \mathscr{A}ut_C \times M.
\]
Its quasi-coherent sheaf of sections $\mathscr{M}_{\mathscr{A}ut_C}$  is locally free. 

As discussed in \S\ref{CC}, $\mathscr{A}ut_C$ is a principal $(\textrm{Aut}_+\mathcal{O})$-bundle over $CJ$, hence has a right action of $\textrm{Aut}_+\mathcal{O}$. 
The left action of $\textrm{Aut}_+\mathcal{O}$ on the modules $M_{\leq i}$, as in \S \ref{Action}, induces a right action of $\textrm{Aut}_+\mathcal{O}$ on $M_{\leq i}$, as in \S\ref{HCcompM}. 
It follows that $\mathscr{A}ut_C \times M_{\leq i}$ has an equivariant right action of $\textrm{Aut}_+\mathcal{O}$. This action is compatible with the inductive limit, so it induces an equivariant right action of $\textrm{Aut}_+\mathcal{O}$ on $M_{\mathscr{A}ut_C}$. 
On the $\mathbb{C}$-points of $M_{\mathscr{A}ut_C}$, the action of $\textrm{Aut}_+\mathcal{O}$ is:
\begin{align}
\label{Aut+Oaction} 
\begin{array}{ccc}
M_{\mathscr{A}ut_C}(\mathbb{C}) \times  \textrm{Aut}_+\mathcal{O}(\mathbb{C}) &\to & M_{\mathscr{A}ut_C}(\mathbb{C})\\
(P,t,m) \times \rho &\mapsto & \left(P, t\cdot\rho, \rho^{-1}\cdot m \right).
\end{array}
\end{align} 
The quotient of $M_{\mathscr{A}ut_C}$ by this action descends to a vector bundle on $CJ=\mathscr{A}ut_C/\textrm{Aut}_+\mathcal{O}$:
\[
M_{CJ} := \mathscr{A}ut_C \mathop{\times}_{\textrm{Aut}_+\mathcal{O}}  M.
\]
Recall that $p\colon \mathscr{A}ut_C \rightarrow CJ$ is the natural projection.  The sheaf of sections 
\[
\mathscr{M}_{CJ} := \left( M \otimes p_*\mathscr{O}_{\mathscr{A}ut_C} \right)^{\textrm{Aut}_+\mathcal{O}}
\]
of $M_{CJ}$ is then a locally free quasi-coherent $\mathscr{O}_{CJ}$-module. 

The fibers of $M_{CJ}$ on $CJ$ are described as follows. 
The fiber of $M_{\mathscr{A}ut_C}$ over a point $(P, \tau) \in CJ(\mathbb{C})$ is the trivial bundle $\mathscr{A}ut_{P,\tau} \mathop{\times} M$ on $\mathscr{A}ut_{P,\tau}$. The group $\textrm{Aut}_+\mathcal{O}$ acts equivariantly on $\mathscr{A}ut_{P,\tau} \mathop{\times} M$, as in \eqref{Aut+Oaction}. The fiber of $M_{CJ}$ over $(P, \tau) \in CJ(\mathbb{C})$ is the space
\begin{equation}
\label{MPtau}
M_{P,\tau} := \mathscr{A}ut_{P,\tau} \mathop{\times}_{\textrm{Aut}_+\mathcal{O}}  M
\end{equation}
defined as the quotient of $\mathscr{A}ut_{P,\tau} \mathop{\times} M$ modulo the relations
\begin{align*} 
(t \cdot\rho, m) = (t, \rho^{-1}\cdot m), \quad \mbox{for } \rho \in \textrm{Aut}_+\mathcal{O}(\mathbb{C}) \mbox{ and } (t,m)\in \mathscr{A}ut_{P,\tau} \mathop{\times} M.
\end{align*} 
One can identify $M_{P,\tau}$ with $\lbrace (\tau, m) \, | \, m \in M \rbrace \cong M$.
Thus we regard  $M_{P,\tau}$ as a realization of the module $M$ assigned at the pair $(P, \tau)$. It is independent of the curve~$C$.

\subsubsection{The flat logarithmic connection on $\mathscr{M}_{CJ}$} 
\label{conn} 
The strategy to produce a flat logarithmic connection on $\mathscr{M}_{CJ}$ is to first construct a flat logarithmic connection on the trivial bundle $\mathscr{A}ut_C {\times}  M$ on $\mathscr{A}ut_C$ induced by the action of the Lie algebra $\textrm{Der}\, \mathcal{O}$. Since the action of $\textrm{Der}\, \mathcal{O}$ is shown to be $(\textrm{Aut}_+\mathcal{O})$-equivariant in the sense of \S \ref{HCcomp}, the connection then descends to $\mathscr{M}_{CJ}$.
For a smooth curve, this has been treated in \cite[\S 17.1]{bzf}.

As discussed in \S\ref{HCcompM}, the action of $\mathfrak{L}(V)$ on $M$ gives rise to an anti-homomorphism of Lie algebras 
\[
\alpha_M \colon\textrm{Der}\, \mathcal{O}(\mathbb{C}) \rightarrow \textrm{End}(M). 
\]
The map $\alpha_M$ extends to an anti-homomorphism of sheaves of Lie algebras 
\begin{align}
\label{connonM}
\beta_C \colon \textrm{Der}\, \mathcal{O}(\mathbb{C}) \,\widehat{\otimes}_\mathbb{C}\,\mathscr{O}_{\mathscr{A}ut_C} \rightarrow \textrm{End}(M) \otimes_\mathbb{C}\mathscr{O}_{\mathscr{A}ut_C}
\end{align}
defined by
\begin{align*}
l\otimes f \mapsto \alpha_M(l)\otimes f + \textrm{id}_M\otimes (\alpha_C(l)\cdot f), \quad \mbox{for $l\in \textrm{Der}\, \mathcal{O}(\mathbb{C})$ and $f\in \mathscr{O}_{\mathscr{A}ut_C}$}.
\end{align*}
From \eqref{alphaCmap}, $\alpha_C$ maps an element of $\textrm{Der}\, \mathcal{O}$ to a vector field on $\mathscr{A}ut_C$, hence the action on regular functions above is by derivations.
Composing with \eqref{anchorAutC}, the map \eqref{connonM} gives rise to a homomorphism of sheaves of Lie algebras 
\begin{align}
\label{fllogconnAM}
\mathscr{T}_{\mathscr{A}ut_C}(-\log D) \rightarrow \textrm{End}(M) \otimes_\mathbb{C}\mathscr{O}_{\mathscr{A}ut_C}
\end{align} 
where the target is the sheaf of endomorphisms of $\mathscr{A}ut_C {\times}  M$, and $D$ is the divisor in $\mathscr{A}ut_C$ lying over the singular locus in $C$. 

The properties of the maps $\alpha_C$ and $\alpha_M$ described respectively in \S \ref{HCcomp} and  \S\ref{HCcompM} imply that the map \eqref{fllogconnAM} is $(\textrm{Aut}_+\mathcal{O})$-equivariant. This is one of the main ingredients to deduce the following statement:

\begin{proposition} 
\label{propMCJconnection} 
The sheaf $\mathscr{M}_{CJ}$ is naturally equipped with a flat logarithmic connection, i.e., a homomorphism of Lie algebras 
\begin{align}
\label{nablaT}
\mathscr{T}_{CJ}(-\log D) \rightarrow \textrm{End}\left(\mathscr{M}_{CJ}\right).
\end{align}
\end{proposition}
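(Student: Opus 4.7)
The plan is to lift the map \eqref{fllogconnAM} from $\mathscr{A}ut_C$ to a connection on the associated bundle $\mathscr{M}_{CJ}$ by exploiting the $(\textrm{Aut}_+\mathcal{O})$-equivariance of the construction, following the Harish--Chandra localization pattern used for smooth curves in \cite{bzf}.

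I would first verify that \eqref{fllogconnAM} is a homomorphism of sheaves of Lie algebras. The anchor $a_C$ of \eqref{anchorAutC} is an anti-isomorphism of Lie algebroids by the local form of Theorem \ref{Virunif}, and the map $\beta_C$ of \eqref{connonM} is an anti-homomorphism built from the two anti-homomorphisms $\alpha_C$ and $\alpha_M$ together with the derivation action of $\alpha_C(l)$ on $\mathscr{O}_{\mathscr{A}ut_C}$. The composition $\beta_C \circ a_C^{-1}$, which is \eqref{fllogconnAM}, therefore preserves brackets.

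Next I would check that \eqref{fllogconnAM} is $(\textrm{Aut}_+\mathcal{O})$-equivariant for the right action. This is where the two compatibility diagrams (ii) of \S\ref{HCcomp} and \S\ref{HCcompM} combine: for $\rho \in \textrm{Aut}_+\mathcal{O}$, one has $\alpha_C(\textrm{Ad}_\rho(l)) = \rho_* \alpha_C(l)$ on $\mathscr{A}ut_C$ and $\alpha_M(\textrm{Ad}_\rho(l)) = \rho_* \alpha_M(l)$ on $M$, with the same adjoint action. These fit together so that $\beta_C$, and therefore \eqref{fllogconnAM}, intertwines the diagonal right $\textrm{Aut}_+\mathcal{O}$-action on $\mathscr{T}_{\mathscr{A}ut_C}(-\log D)$ and on $\textrm{End}(\mathscr{A}ut_C \times M)$.

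Finally, descent: because $D$ on $\mathscr{A}ut_C$ is pulled back from the singular locus of $CJ$, pushing forward by $p$ and taking $(\textrm{Aut}_+\mathcal{O})$-invariants identifies invariants of $\textrm{End}(M) \otimes \mathscr{O}_{\mathscr{A}ut_C}$ with $\textrm{End}(\mathscr{M}_{CJ})$, and produces an action of the Atiyah algebroid of $p$ on $\mathscr{M}_{CJ}$. Property (i) of \S\ref{HCcompM} ensures that the vertical part of the Atiyah algebroid, i.e.\ elements of $\textrm{Der}_+\mathcal{O}$ acting through $\alpha_M$, realizes the infinitesimal right $(\textrm{Aut}_+\mathcal{O})$-action on $M$, which is trivial on $(\textrm{Aut}_+\mathcal{O})$-invariant sections; hence the action factors through the quotient $\mathscr{T}_{CJ}(-\log D)$, giving the desired homomorphism \eqref{nablaT}. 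Flatness is automatic from the first step. The main obstacle is precisely this final factoring — checking that the vertical contribution vanishes on $\mathscr{M}_{CJ}$ — which relies crucially on the compatibility \S\ref{HCcompM}(i) between the infinitesimal and integrated $\textrm{Aut}_+\mathcal{O}$-actions on $M$ so that the two pieces of $\beta_C$ cancel against each other on equivariant sections.
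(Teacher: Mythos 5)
Your proposal is correct and follows essentially the same route as the paper: the paper likewise uses the exact sequence $0 \to \mathscr{T}_{\mathscr{A}ut_C}^{\text{vert}}\to \mathscr{T}_{\mathscr{A}ut_C}(-\log D)\to p^*\mathscr{T}_{CJ}(-\log D)\to 0$ to identify the vertical part with $\text{Der}_+\mathcal{O}(\mathbb{C})\,\widehat{\otimes}_\mathbb{C}\,\mathscr{O}_{\mathscr{A}ut_C}$, invokes the $(\textrm{Aut}_+\mathcal{O})$-equivariance of \eqref{fllogconnAM} established via \S\ref{HCcomp} and \S\ref{HCcompM}, and concludes by observing that the vertical action, being the differential of the integrated $\textrm{Aut}_+\mathcal{O}$-action, is trivial on invariant sections. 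Your identification of the final factoring step as the crux matches the paper's emphasis exactly.
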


\proof  Consider the exact sequence involving the tangent sheaves of $\mathscr{A}ut_C$ and $CJ$:
\begin{equation} 
\label{TangentExSeq} 
0 \to \mathscr{T}_{\mathscr{A}ut_C}^{\text{vert}}\to \mathscr{T}_{\mathscr{A}ut_C}(-\log D)  \to p^*\mathscr{T}_{CJ}(-\log D)  \to 0.
\end{equation}
From the isomorphism $\mathscr{T}_{\mathscr{A}ut_C}(-\log D)   \cong \text{Der}\,\mathcal{O}(\mathbb{C}) \,\widehat{\otimes}_\mathbb{C}\, \mathscr{O}_{\mathscr{A}ut_C}$ and the fact that the elements $t^i\partial_t \otimes f$ map to zero in $p^*\mathscr{T}_{CJ}(-\log D) $ for $i \geq 2$, we deduce that 
\[ 
\mathscr{T}_{\mathscr{A}ut_C}^{\text{vert}}  \cong \text{Der}_+\mathcal{O}(\mathbb{C}) \,\widehat{\otimes}_\mathbb{C}\, \mathscr{O}_{\mathscr{A}ut_C}.
\]

In particular this implies that pushing forward the exact sequence \eqref{TangentExSeq} and taking $(\text{Aut}_+\mathcal{O})$-invariants, we can describe $\mathscr{T}_{CJ}(-\log D) $ as the quotient
\[ 
\left(\text{Der}\,\mathcal{O}(\mathbb{C}) \,\widehat{\otimes}_\mathbb{C}\, p_*\mathscr{O}_{\mathscr{A}ut_C}\right)^{\text{Aut}_+\mathcal{O}} /  \left(\text{Der}_+\mathcal{O}(\mathbb{C}) \,\widehat{\otimes}_\mathbb{C}\, p_*\mathscr{O}_{\mathscr{A}ut_C}\right)^{\text{Aut}_+\mathcal{O}}.
\]
In order to induce a connection on $\mathscr{M}_{CJ}$ from the one on $\mathscr{M}$, it is sufficient to prove that $(\text{Der}_+\mathcal{O}(\mathbb{C}) \,\widehat{\otimes}_\mathbb{C}\, \mathscr{O}_{\mathscr{A}ut_C})^{\textrm{Aut}_+\mathcal{O}}$ acts trivially on $\mathscr{M}_{CJ}$. By definition, the action of $\mathscr{T}_{\mathscr{A}ut_C}^{\text{vert}}$ on $M \otimes \mathscr{O}_{\mathscr{A}ut_C}$ is obtained by differentiating the natural action of $\text{Aut}_+\mathcal{O}$. Since $\mathscr{M}_{CJ}$ is given by the elements of $\mathscr{M}_{\mathscr{A}ut_C}$ on which $\text{Aut}_+\mathcal{O}$ acts as the identity, then the associated Lie algebra action will be trivial, concluding the argument.
\endproof

For the reader familiar with the localization of modules of Harish-Chandra pairs (\cite{bb}; see also \cite[\S 3]{bfm}),
the construction in this section is the result of the localization with respect to the Harish-Chandra pair $(\textrm{Der}\, \mathcal{O}, \textrm{Aut}_+\mathcal{O})$: the $(\textrm{Der}\, \mathcal{O}, \textrm{Aut}_+\mathcal{O})$-module $M$ is transformed into the logarithmic $\mathcal{D}$-module $\mathscr{M}_{CJ}$ on $CJ$.

\subsection{The sheaf $\mathscr{M}_C$}\label{DesV}
In \S \ref{DesM} we describe the sheaf $\mathscr{M}_{C}$, and in \S \ref{vab}, using the flat logarithmic connection on $\mathscr{V}_{CJ}$, we show that there is a flat logarithmic connection on $\mathscr{V}_{C}$.

\subsubsection{$\mathbb{C}^\times$-equivariance}
\label{DesM}
In general, the action of $L_0$ cannot be integrated to an action of $\mathbb{C}^\times$, unless the action of $L_0$ has only integral eigenvalues.
 
We obtain an action of $\mathbb{C}^\times$ in a different way.
Our assumption is that a $V$-module $M$ is $\mathbb{Z}$-graded, with gradation bounded from below. The $\mathbb{Z}$-gradation induces an action of $\mathbb{C}^\times$ on $M$: 
\begin{align}
\label{CactM}
z\cdot v:= z^{-\deg v} v, \qquad \mbox{for $z\in \mathbb{C}^\times$ and homogeneous $v\in M$}.
\end{align}
Assume for simplicity that $M$ is simple, and let $a$ be the conformal dimension of $M$. 
We claim that the given action of $\mathbb{C}^\times$ integrates the action of $L_0$ on ${M}_{CJ}$ after tensoring with $\omega_C^{\otimes -a}$. 
This allows one to define a $\mathbb{C}^\times$-equivariant action on ${M}_{CJ}\otimes \omega_C^{\otimes -a}\rightarrow CJ$.
As $q\colon CJ\rightarrow C$ is a principal $\mathbb{G}_m$-bundle on the curve $C$, the bundle ${M}_{CJ}\otimes \omega_C^{\otimes -a}$ descends to a bundle  on $C$, and we define:
\[
{M}_{{C}} := \left( q_*{M}_{CJ} \otimes \omega_C^{\otimes -a} \right)^{\mathbb{G}_m} \otimes \omega_C^{\otimes a}.
\]
We denote by $\mathscr{M}_{{C}}$ the sheaf of sections of ${M}_{{C}}$,
and  ${M}_{P}$  the fiber of $\mathscr{M}_{{C}}$ over a point $P\in C$.

To verify the claim, 
we note that locally on $CJ$, the sheaf $\mathscr{M}_{CJ}$ is free, hence isomorphic to $M \otimes \mathscr{O}_{CJ}$. Fix a point $P$ on a curve $C$ and let $R$ be the local ring of $C$ at $P$. Locally, $CJ$ is given by $\mathrm{Spec}(R[w, w^{-1}])$, and
 $\mathscr{M}_{CJ}$ is given by $M \otimes R[w, w^{-1}]$. 
After trivializing $\omega^{-a}$ locally as $w^a$, one has a local isomorphism: $\mathscr{M}_{CJ} = \mathscr{M}_{CJ} \otimes w^a$.
In particular, over a point, one has the isomorphism $M = M\otimes w^a$. The action of $\mathbb{C}^\times$ on $M$ given by \eqref{CactM} may be thought of as an action of $\mathbb{C}^\times$ on $M\otimes w^a$ via this isomorphism.

It remains then to verify that this action of $\mathbb{C}^\times$ integrates the action of $L_0$ on $M\otimes w^a$.
That is, we need to check that $L_0$ acts on an element $v \otimes w^a$ in $M\otimes w^a$ as multiplication by $\deg(v)$. For this, we note that
$L_0$ acts on $v$ as multiplication by $\deg(v)+a$; and $L_0 = -w \partial_w$ acts on $w^a$ as multiplication by $-a$. In summary, one has a multiplication by $\deg(v)$, as desired.

Note that when $L_0(v)=(\deg v)v$ for homogeneous $v\in M$ (e.g., $M=V$), the $L_0$-action on $M$ integrates to the $\mathbb{C}^\times$-action in \eqref{CactM} on $M$. In general, the eigenvalues of $L_0$ are complex numbers, and thus $L_0(v)\not\in \mathbb{Z}v$.

\subsubsection{Local description of the action of $\mathbb{G}_m$ on the sheaf of sections.} 
\label{descrMc} 
As above, locally on $CJ$, the sheaf $\mathscr{M}_{CJ}$ is $M \otimes R[w, w^{-1}]$.
The action of an element $z \in \mathbb{C}^\times$ on $\mathscr{M}_{CJ}\otimes  \omega_C^{\otimes -a}$ is then:
\[ M \otimes R[w, w^{-1}] \to  M \otimes R[w, w^{-1}], \quad v\otimes  w^{n+a} \mapsto z^{{}-\deg v +n}\,  v \otimes w^{n+a}.
\]
This description implies that the invariants in $\mathscr{M}_{CJ}\otimes  \omega_C^{\otimes -a}$ under this action are linear combinations of elements of  type $v \otimes w^{\deg v +a}$ for $v \in M$, hence sections of $\mathscr{M}_C$ are spanned by $v \otimes w^{\deg v }$ for $v \in M$. We will use this description in \S \ref{ProjConnVM}.

\subsubsection{The case $M=V$}
\label{VC}
When $M=V$, the action of $L_0=-z\partial_z$ defines the integral gradation of $V$, that is, $v\in V_i$ if and only if $L_0(v)=iv$. It follows that the action of $L_0$ gives rise to a $\mathbb{C}^\times$-action on $V$ which coincides with the one described in \eqref{CactM}. Hence the action of $\textrm{Der}_0\, \mathcal{O}=\textrm{Lie}(\textrm{Aut}\,\mathcal{O})$ on $V$ from \S \ref{UV} can be integrated to an action of $\textrm{Aut}\,\mathcal{O}= \mathbb{G}_m \ltimes \textrm{Aut}_+\mathcal{O}$ on $V$. Replacing $\textrm{Aut}_+\mathcal{O}$ with $\textrm{Aut}\,\mathcal{O}$ in \S \ref{Vmodbdle} and \S\ref{conn}, one produces the \textit{vertex algebra bundle} 
\[
{V}_C := \mathscr{A}ut_C \mathop{\times}_{\textrm{Aut}\,\mathcal{O}}  V
\]
on $C$, whose sheaf of sections
\[
\mathscr{V}_C :=  \left( V \otimes q_*p_* \mathscr{O}_{\mathscr{A}ut_C} \right)^{\textrm{Aut}\,\mathcal{O}}
\]
is a quasi-coherent locally free $\mathscr{O}_C$-module.
Here $p\colon \mathscr{A}ut_C \rightarrow CJ$ and $q\colon CJ\rightarrow C$ are the natural projections.

\subsubsection{The connection on $\mathscr{V}_{C}$}
\label{vab}

With an argument analogous to the proof of Proposition \ref{propMCJconnection}, one can show that
$\mathscr{V}_{C}$ has a flat logarithmic connection:
\begin{proposition} \label{propVconnection} 
The sheaf $\mathscr{V}_{C}$ is naturally equipped with a flat logarithmic connection, i.e., there is a  morphism
\begin{equation}
\label{nablaDiff} 
\nabla \colon \mathscr{V}_C \rightarrow \mathscr{V}_C \otimes \omega_C
\end{equation}
arising from an action of the Lie algebra $\mathscr{T}_{C}$ on $\mathscr{V}_{C}$.
\end{proposition}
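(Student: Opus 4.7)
The plan is to reduce to Proposition~\ref{propMCJconnection} with $M=V$ and then descend the resulting flat logarithmic connection one further step, along the principal $\mathbb{G}_m$-bundle $q\colon CJ \to C$. This additional descent is available specifically for $M=V$ because the conformal structure axiom (d) forces $L_0 v = (\deg v)v$ on $V$, so the $\mathbb{C}^\times$-action on $V$ from \S\ref{VC} integrates $-L_0$ and the whole construction becomes a Harish-Chandra localization for the pair $(\textrm{Der}\,\mathcal{O},\textrm{Aut}\,\mathcal{O})$ --- one step beyond the $(\textrm{Der}\,\mathcal{O},\textrm{Aut}_+\mathcal{O})$-localization used in Proposition~\ref{propMCJconnection}.

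First I would invoke Proposition~\ref{propMCJconnection} for $M=V$ to obtain a flat logarithmic connection $\rho\colon \mathscr{T}_{CJ}(-\log D) \to \textrm{End}(\mathscr{V}_{CJ})$. Then I would write the exact sequence of tangent sheaves along $q$,
\[
0 \to \mathscr{T}_{CJ}^{\textup{vert}} \to \mathscr{T}_{CJ}(-\log D) \to q^*\mathscr{T}_C \to 0,
\]
push forward to $C$, and take $\mathbb{C}^\times$-invariants; by the same computation as in the proof of Proposition~\ref{propMCJconnection} the quotient becomes $\mathscr{T}_C$, and the task reduces to showing that $(q_*\mathscr{T}_{CJ}^{\textup{vert}})^{\mathbb{C}^\times}$ acts trivially on $\mathscr{V}_C = (q_*\mathscr{V}_{CJ})^{\mathbb{C}^\times}$.

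The key step is this triviality. By the $\textrm{Aut}\,\mathcal{O}$-equivariance built into $\rho$ through the compatibility properties of \S\ref{HCcompM}--\S\ref{HCcomp}, the action of $\mathscr{T}_{CJ}^{\textup{vert}}$ on $\mathscr{V}_{CJ}$ through $\rho$ coincides with the infinitesimal generator of the $\mathbb{C}^\times$-action on $\mathscr{V}_{CJ}$, and this by definition vanishes on the invariant subsheaf. Concretely, in the local trivialization of \S\ref{descrMc} the vertical generator $w\partial_w$ sends $v \otimes w^n$ to $(n - \deg v)(v \otimes w^n)$ and therefore annihilates the spanning invariants $v \otimes w^{\deg v}$. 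Once this is established, $\rho$ factors through a Lie algebra homomorphism $\mathscr{T}_C \to \textrm{End}(\mathscr{V}_C)$ and flatness is inherited; reinterpreting via the duality between $\mathscr{T}_C$ and $\omega_C$ on the nodal curve $C$ yields the required morphism $\nabla\colon \mathscr{V}_C \to \mathscr{V}_C \otimes \omega_C$.

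The hard part will be the identification of $\rho$ along the vertical direction with the infinitesimal $\mathbb{C}^\times$-action: this genuinely requires the integrality of the $L_0$-eigenvalues and the compatibility of the $\mathbb{G}_m$ factor of $\textrm{Aut}\,\mathcal{O} = \mathbb{G}_m \ltimes \textrm{Aut}_+\mathcal{O}$ with the $\mathbb{C}^\times$-action $z\cdot v = z^{-\deg v}v$ on $V$, both of which hold precisely because $M=V$. The same argument fails for a general $V$-module with non-integer $L_0$-weights, which is why Proposition~\ref{propMCJconnection} only produces a connection on $\mathscr{M}_{CJ}$ and not on a sheaf over $C$ itself.
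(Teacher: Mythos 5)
Your proposal is correct and follows essentially the same route as the paper: apply Proposition~\ref{propMCJconnection} with $M=V$, use the $\mathbb{C}^\times$-equivariance of the connection (which holds because $L_0$ has integral eigenvalues on $V$ and its action integrates to the grading $\mathbb{C}^\times$-action) to descend along $q\colon CJ\to C$, and kill the vertical direction because it is the infinitesimal generator of the group action being quotiented by. The only cosmetic difference is at the very end: the paper passes from the action of $\mathscr{T}_C$ to the morphism $\nabla\colon\mathscr{V}_C\to\mathscr{V}_C\otimes\omega_C$ explicitly via the canonical map $\Omega_C\to\omega_C$ (since on a nodal curve $\omega_C$ is the dualizing sheaf rather than $\Omega_C$), which is the precise form of the ``duality'' step you gesture at.
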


\begin{proof}
Being $(\textrm{Aut}\,\mathcal{O})$-equivariant, the connection from \eqref{fllogconnAM} with $M=V$ gives rise to a flat logarithmic connection 
\begin{eqnarray}
\label{nablaV}
\mathscr{T}_{CJ}(-\log D) \otimes \mathscr{V}_C \rightarrow \mathscr{V}_C.
\end{eqnarray}
Equivalently, the action of the tangent sheaf $\mathscr{T}_{CJ}(-\log D)$ on $\mathscr{V}_{CJ}$ is $\mathbb{C}^\times$-equiva\-riant, so we obtain that $\mathscr{T}_{CJ}(-\log(D))^{\mathbb{C}^\times}$ acts on $\mathscr{V}_C$. The action of $L_0$ on $V$ induces the grading on $V$, and thus induces the $\mathbb{C}^\times$ from \S\ref{DesM}. Using an argument similar to the one given for the proof of Proposition \ref{propMCJconnection}, it follows that $\mathscr{T}_{C}(-\log D)$, which coincides with $\mathscr{T}_C$, acts on $\mathscr{V}_C$. For what follows it is more convenient to describe this action on $\mathscr{V}_C$ in terms of differentials: we can rewrite the natural action of $\mathscr{T}_C$ on $\mathscr{V}_C$ as a morphism
\[\Omega_{C}^\vee \otimes \mathscr{V}_C \rightarrow \mathscr{V}_C
\] 
that via the canonical map $\Omega_C \to \omega_C$, induces a map $\omega_C^\vee \otimes \mathscr{V}_C \rightarrow \mathscr{V}_C$, equivalent to  $\nabla$ in \eqref{nablaDiff}.
\end{proof}


\section{Spaces of coinvariants}
\label{Coinvariants}

For a stable $n$-pointed curve $(C,P_{\bullet})$ and $V$-modules $M^\bullet=(M^1, \dots, M^n)$,
we describe  vector spaces of coinvariants ${\mathbb{V}^J}(V; M^\bullet)_{(C,P_\bullet, \tau_\bullet)}$  at  $(C,P_\bullet, \tau_\bullet)$ (dependent on a choice of non-zero $1$-jets $\tau_\bullet$ at the marked points, see (\ref{spacecon})), and ${\mathbb{V}}(V; M^\bullet)_{(C,P_\bullet)}$  at  $(C,P_\bullet)$ (independent of jets, see (\ref{spaceconCstar})) given by the action of the Lie algebra $\mathscr{L}_{C\setminus P_\bullet}(V)$ associated to $(C,P_{\bullet})$ and the vertex operator algebra~$V$ (see (\ref{UCminusPV})).   
Recall that for a representation $\mathbb{M}$ of $\mathscr{L}_{C\setminus P_\bullet}(V)$, the \textit{space of coinvariants} of $\mathbb{M}$  is the quotient
$\mathbb{M} /  \mathscr{L}_{C\setminus P_\bullet}(V)\cdot \mathbb{M}$; this is the largest quotient of $\mathbb{M}$ on which $\mathscr{L}_{C\setminus P_\bullet}(V)$ acts trivially.  

As explained in \S \ref{UPV}, for each marked point $P_i$, the Lie algebra $\mathscr{L}_{P_i}(V)$ (see (\ref{UCminusP})) gives a coordinate-free realization of 
 $\mathfrak{L}(V)$ (see (\ref{LV})) based at $P_i$ by \cite[Cor.~19.4.14]{bzf}.
Following Lemma~\ref{Lem:VonMcoordFree}, the Lie algebra $\mathscr{L}_{P_i}(V)$ acts on the coordinate-free realization of the $V$-module $M^i$ based at $P_i$.
In \S \ref{coinvdef} we will see that this induces an action of $\mathscr{L}_{C\setminus P_\bullet}(V)$ on the coordinate-free realization of $\otimes_{i} M^i$ based at the marked points $P_\bullet$.  

Finally, in \S \ref{VirLater}, we gather a few statements concerning the Virasoro vertex algebra $\textrm{Vir}_c$ and the action of $\mathscr{T}_C(C\setminus P_\bullet)$ ---  the Lie algebra  of regular vector fields on $C\setminus P_\bullet$ --- on spaces of coinvariants that will be used  in \S \ref{projconnCJ} and \S \ref{ProjConnVM}.  The residue theorem plays a role  in the proof of Lemma \ref{vect}.

\subsection{The Lie algebra $\mathscr{L}_{C\setminus P_\bullet}(V)$}
\label{PFC}

Given a stable $n$-pointed curve $(C,P_\bullet)$ and a vertex operator algebra $V$, the vector space
\begin{align}
\label{UCminusPV}
\mathscr{L}_{C\setminus P_\bullet}(V):=H^0 \left( C\setminus P_\bullet, \mathscr{V}_C \otimes \omega_C / \textrm{Im} \nabla \right)
\end{align}
is a Lie algebra (as in the case $C$ smooth proved in \cite[Cor.~19.4.14]{bzf}).
Here $\mathscr{V}_C$ is the vertex algebra sheaf on $C$ from \S\ref{VC}, and $\nabla$ its flat connection from \eqref{nablaDiff}.

\subsection{The Lie algebra $\mathscr{L}_{P}(V)$}
\label{UPV} 
For a stable pointed curve $(C,P)$, let $D^\times_P$ be the punctured disk about  $P$ on $C$.  That is,  $D^\times_P= \textrm{Spec} \, \mathscr{K}_P$, where $\mathscr{K}_P$ is the field of fractions of ~$\widehat{\mathscr{O}}_P$, the completed local ring at $P$. Given a vertex operator algebra $V$, let $\mathscr{V}_C$ be the vertex algebra sheaf on $C$ from \S\ref{VC}, and $\nabla$ its flat connection from \eqref{nablaDiff}.  Define
\begin{align}
\label{UCminusP}
\mathscr{L}_{P}(V) := H^0 \left( D^\times_P, \mathscr{V}_C \otimes \omega_C / \textrm{Im} \nabla \right).
\end{align}
The vector space $\mathscr{L}_{P}(V)$ forms a Lie algebra isomorphic to $\mathfrak{L}(V)$ \cite[Cor.~19.4.14]{bzf}, and can be thought of as a coordinate-independent realization of $\mathfrak{L}(V)$ based at $P$. Moreover, the action of $\mathfrak{L}(V)$ on a $V$-module $M$ extends to an action of the Lie algebra $\mathscr{L}_{P}(V)$  on ${M}_{P,\tau}$ (see \eqref{MPtau}), for any non-zero $1$-jet $\tau$ at~$P$ \cite[Thm 7.3.10]{bzf}. 
Similarly, the following lemma shows that the action of $\mathfrak{L}(V)$ on M induces a coordinate-independent action of  $\mathscr{L}_{P}(V)$ on $M_P$ ($M_P$ is defined in \S\ref{DesM}). 

\begin{lemma} 
\label{Lem:VonMcoordFree} 
There exists an anti-homomorphism of Lie algebras
\[
\alpha_M\colon \mathscr{L}_{P}(V) \rightarrow \textup{End}(M_P).
\]
\end{lemma}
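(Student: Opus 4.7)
The plan is to bootstrap from the coordinate-dependent action on $M_{P,\tau}$ provided by \cite[Thm 7.3.10]{bzf} to the coordinate-independent $M_P$ by taking appropriate invariants.

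First I would invoke \cite[Thm 7.3.10]{bzf}: for each non-zero $1$-jet $\tau$ at $P$, this gives an anti-homomorphism of Lie algebras $\alpha_{M,\tau}\colon \mathscr{L}_P(V) \to \textup{End}(M_{P,\tau})$. Letting $CJ_P$ denote the fiber of $q\colon CJ \to C$ over $P$ (a $\mathbb{C}^\times$-orbit of $1$-jets), these pointwise actions assemble into an $\mathscr{O}_{CJ_P}$-linear anti-homomorphism of sheaves of Lie algebras
\begin{equation*}
\widetilde{\alpha}_M \colon \mathscr{L}_P(V) \otimes_{\mathbb{C}} \mathscr{O}_{CJ_P} \longrightarrow \textup{End}_{\mathscr{O}_{CJ_P}}\bigl(\mathscr{M}_{CJ}|_{CJ_P}\bigr).
\end{equation*}
The naturality of the construction (essentially the Harish-Chandra localization observed at the end of \S\ref{conn}) is what justifies this assembly.

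Next I would verify that $\widetilde{\alpha}_M$ is equivariant for the $\mathbb{C}^\times$-action from \S\ref{DesM}, where $\mathbb{C}^\times$ acts on $CJ_P$ by scaling $1$-jets and on $\mathscr{M}_{CJ}|_{CJ_P}$ through the grading as in \eqref{CactM}. The key input is that $\mathscr{L}_P(V)$ is, by definition of \eqref{UCminusP}, intrinsic to the punctured disk $D^\times_P$ and independent of any choice of $1$-jet. Combined with the $\textup{Aut}_+\mathcal{O}$-equivariance of \S\ref{HCcompM}(ii) on $\mathscr{A}ut_C$, this descends via $\mathbb{C}^\times \cong \textup{Aut}\,\mathcal{O}/\textup{Aut}_+\mathcal{O}$ to $\mathbb{C}^\times$-equivariance on the fiber $CJ_P$. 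Since $M_P$ is defined in \S\ref{DesM} as the $\mathbb{C}^\times$-invariants $(q_*\mathscr{M}_{CJ}|_{CJ_P})^{\mathbb{C}^\times}$, taking these invariants then yields the sought-after anti-homomorphism $\alpha_M\colon \mathscr{L}_P(V) \to \textup{End}(M_P)$.

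The main obstacle is the $\mathbb{C}^\times$-equivariance step. Concretely, one must check that for a local representative $l \in \mathscr{L}_P(V)$ mapping under a choice of coordinate to the Fourier coefficient $A^M_{(j)}$ with $A$ homogeneous, the degree-shift \eqref{degAi} of $A^M_{(j)}$ matches exactly the rescaling $v \mapsto z^{-\deg v}v$ induced on $M_{P,\tau}\cong M$ by the change of $1$-jet $\tau \mapsto z\tau$. This amounts to specializing the transformation formula for $Y^M$ under $\textup{Aut}\,\mathcal{O}$ recalled in \S\ref{HCcompM}(ii) to the rescaling element $\rho(t) = zt \in \mathbb{G}_m \subset \textup{Aut}\,\mathcal{O}$, after which the equivariance, and hence the descent to $M_P$, is a bookkeeping check.
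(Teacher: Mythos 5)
Your proposal follows essentially the same route as the paper: both quote \cite[\S 7.3.10]{bzf} for the $(\textup{Aut}_+\mathcal{O})$-part and reduce the new content to checking invariance under the residual rescaling $z\mapsto az$, where the degree shift $\deg A - i - 1$ from \eqref{ModShift} cancels exactly against the grading action \eqref{CactM} --- which is the computation the paper carries out explicitly. The only cosmetic difference is order of operations (you take residues first and then $\mathbb{C}^\times$-invariants of the family over $CJ_P$, while the paper checks coordinate-independence of the vertex operator $\mathcal{Y}^M_P$ at the level of matrix coefficients and then passes to residues and to the quotient by $\textup{Im}\,\nabla$), and you correctly flag that the $\mathbb{G}_m$-equivariance is the genuine obstacle since Huang's formula is only stated for $\textup{Aut}_+\mathcal{O}$.
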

When the action of $\mathbb{G}_m$ is induced by $L_0$, Lemma \ref{Lem:VonMcoordFree} is the content of \cite[\S 7.3.7]{bzf}.  
In the general case where the action of $\mathbb{G}_m$ is given by the integral gradation of $M$, the proof proceeds as in \cite[\S 6.5.4]{bzf}. We present it here for completeness. 
\proof 
One first shows that it is possible to define a coordinate-free version of the map $Y^M(-,z)$. This is a map $\mathcal{Y}^M_P$ which assigns to every section in $H^0( D_P^\times, \mathscr{V}_C)$ an element of $\text{End}(\mathscr{M}_{D_P^\times})$. Let us fix elements $s \in H^0( D_P^\times, \mathscr{V}_C)$, $v \in M_P$, and $\phi \in M_P^\vee$. To define $\mathcal{Y}^M_P$, it is enough to assign an element 
\[ 
\langle \phi, \mathcal{Y}^M_P(s) v \rangle \in \mathscr{O}_C(D_P^\times)
\] 
to every $s$, $v$, and $\phi$ as above.
Once we choose a local coordinate $z$ at $P$, we can identify $s$ with an element $A_z$ of $V\llbracket z \rrbracket$, the element $v$ with an element $v_z$ of $M$ and $\phi$ with an element $\phi_z$ of $M^\vee$. Since the map $\mathcal{Y}^M_P$ must be $\mathscr{O}_{D_P}$-linear, we can further assume that $A_z \in V$, and that it is homogeneous. We then define 
\[ 
\langle \phi, \mathcal{Y}^M_P(s) v \rangle := \phi_z \left( Y^M(A_z,z) v_z \right).
\]
We have to show that this is independent of the choice of the coordinate~$z$. It has already been shown in \cite[\S 7.3.10]{bzf} that this map is $(\text{Aut}_+\mathcal{O})$-invariant, so we only show that it is invariant under the change of coordinate $z \to w:=a z$ for $a \in \mathbb{C}^\times$, i.e., we are left to prove the equality
\begin{equation} 
\label{proofVonMcoordFree} 
\phi_z \left(Y^M(A_z,z) v_z \right)  = \phi_w \left( Y^M(A_w,w) v_w\right). 
\end{equation} 
We need to compare $A_z$, $v_z$, and $\phi_z$ with $A_w$, $v_w$, and $\phi_w$. Unraveling the action of $\mathbb{C}^\times$ on $V$ and $M$ coming from the change of coordinate as described in \eqref{CactM}, we see that 
\[
A_w = a^{-1} \cdot A_z = a^{\deg A_z}A_z, \qquad v_w= a^{-1} \cdot v_z = a^{\deg v_z} v_z,
\] 
where we assume that $v_z$ is homogeneous for simplicity. Finally, the element $\phi_w$ is just the composition of $\phi_z$ with the action of $a$ on $M$. We can then rewrite the right-hand side of equation \eqref{proofVonMcoordFree} as
\[ 
 (\phi_z \circ a ) \left( Y^M(a^{\deg A_z} A_z, az) a^{\deg v} v_z \right)
\]
so it is enough to show that 
\[ 
Y^M(A_z,z)v_z = a \cdot \left( Y^M(a^{\deg A_z} A_z, az) a^{\deg v} v_z\right) \quad\mbox{in $M \llbracket z, z^{-1} \rrbracket$.}
\]
One has
\begin{multline*}
       a \cdot \left( (a^{\deg A_z} A_z)_{(i)} a^{\deg v} v (az)^{-i-1} \right)
    =   a^{\deg A_z + \deg v_z-i-1}  a \cdot\left((A_z)_{(i)}v_z \right) z^{-i-1}\\
    =   a^{\deg A_z + \deg v_z-i-1-\deg (A_z)_{(i)}(v)} (A_z)_{(i)}v_z z^{-i-1}
    = (A_z)_{(i)}v_z z^{-i-1}.
\end{multline*}
The last equality follows from \eqref{ModShift}. We conclude that $\mathcal{Y}^M_P$ is well defined. 
In fact this is a particular case of a more general result proved in \cite{yzhuang}. 
Using residues and an argument similar to \cite[\S 6.5.8]{bzf}, we induce from $\mathcal{Y}^M_P$ the dual map
\[
\left(\mathcal{Y}^{M}_P\right)^\vee\colon H^0(D^\times_P, \mathscr{V} \otimes \omega_C) \to \text{End}(M_P).
\]
It remains to show that this factors through a map 
\[
\alpha_M \colon H^0(D^\times_P, \mathscr{V} \otimes \omega_C / \textrm{Im} \nabla) \to \text{End}(M_P).
\]
This can be checked by choosing a local coordinate at $P$ as in \cite[\S 7.3.7]{bzf}.
\endproof

\subsection{Coinvariants}
\label{coinvdef}
Fix a stable $n$-pointed curve $(C,P_\bullet)$ with marked points $P_\bullet=(P_1,\dots, P_n)$. 
One has a Lie algebra homomorphism by restricting sections
\begin{equation}
\label{LCPtosumLP}
\mathscr{L}_{C\setminus P_\bullet}(V)  \rightarrow  \oplus_{i=1}^n \mathscr{L}_{P_i}(V), \ \ \
\mu \mapsto  \left(\mu_{P_1},\dots, \mu_{P_n} \right).
\end{equation}
This is as in the case $C$ smooth proved in \cite[Cor.~19.4.14]{bzf}.

We assume below that each irreducible component of $C$ contains a marked point; in particular, $C\setminus P_\bullet$ is affine.

Fix $\tau_\bullet=(\tau_1,\dots, \tau_n)$, where each $\tau_i$ is a non-zero $1$-jet at $P_i$. 
Given an $n$-tuple of $V$-modules $M^\bullet:=(M^1,\dots,M^n)$, the \textit{space of coinvariants} of $M^{\bullet}$ at  $(C,P_\bullet, \tau_\bullet)$ is defined as the quotient
\begin{align}
\label{spacecon}
{\mathbb{V}^J}(V; M^\bullet)_{(C,P_\bullet, \tau_\bullet)} := \otimes_{i=1}^n \, {M}_{P_i, \tau_i}^i \Big/  \mathscr{L}_{C\setminus P_\bullet}(V)\cdot \left( \otimes_{i=1}^n \, {M}_{P_i, \tau_i}^i \right).
\end{align}
Here, ${M}_{P_i, \tau_i}^i$ is the coordinate-independent realization of $M^i$ assigned at $(P_i,\tau_i)$ as in \eqref{MPtau}.
The action of $ \mathscr{L}_{C\setminus P_\bullet}(V)$ is the restriction of the action of $\oplus_{i=1}^n \mathscr{L}_{P_i}(V)$ via \eqref{LCPtosumLP}:
\[
\mu  (A_1\otimes \cdots \otimes A_n) = \sum_{i=1}^n A_1\otimes \cdots \otimes \alpha_{M^i}(\mu_{P_i})\cdot A_ i \otimes \cdots \otimes A_n,
\]
for $A_i \in {M}_{P_i, \tau_i}^i$ and $\mu\in  \mathscr{L}_{C\setminus P_\bullet}(V)$.

Similarly, the \textit{space of coinvariants} of $M^{\bullet}$ at  $(C,P_\bullet)$ is defined as the quotient
\begin{align}
\label{spaceconCstar}
{\mathbb{V}}(V; M^\bullet)_{(C,P_\bullet)}:=\otimes_{i=1}^n \, {M}_{P_i}^i \Big/  \mathscr{L}_{C\setminus P_\bullet}(V)\cdot \left( \otimes_{i=1}^n \, {M}_{P_i}^i \right).
\end{align}
Here, ${M}_{P_i}^i$ is the coordinate-independent realization of $M^i$ assigned at $P_i$ as in \S\ref{DesM}.

\subsection{Triviality of the action of $\mathscr{T}_C(C\setminus P_\bullet)$ on coinvariants}\label{VirLater}
For $c \in \mathbb{C}$, let $U(\textrm{Vir})$ be the universal enveloping algebra of the Virasoro algebra $\textrm{Vir}$, and define
\[
\textrm{Vir}_c={\rm Ind}^{\textrm{Vir}}_{\textrm{Der}\, \mathcal{O} \oplus \mathbb{C}K} \,\mathbb{C}_c =  U(\textrm{Vir}) \otimes_{U(\textrm{Der}\, \mathcal{O} \oplus \mathbb{C}K)} \mathbb{C}_c,
\]
where  $\textrm{Der}\, \mathcal{O}$ acts by zero and $\textrm{K}$ acts as multiplication by $c$  on the one dimensional module $\mathbb{C}_c\cong \mathbb{C}$.
The space $\textrm{Vir}_c$ has the structure of a vertex operator algebra of central charge $c$ (see \cite[\S 2.5.6]{bzf} for this and more details about $\textrm{Vir}_c$).  

Let $\mathscr{T}_C(C\setminus P_\bullet)$ be the Lie algebra  
of regular vector fields on $C\setminus P_\bullet$.  For  $V$-modules $M^\bullet=(M^1, \dots, M^n)$,  we will see that the action of $\mathscr{L}_{C\setminus P_\bullet}(\textup{Vir}_c)$ extends an action of $\mathscr{T}_C(C\setminus P_\bullet)$ on coordinate-free realizations of $\otimes_i M^i$, hence:

\begin{lemma}\label{vect}
When $C\setminus P_\bullet$ is affine,
 $\mathscr{T}_C(C\setminus P_\bullet)$ acts trivially on  
${\mathbb{V}^J}(\textup{Vir}_c; M^\bullet)_{(C,P_\bullet, \tau_\bullet)}$  and on  $\mathbb{V}(\textup{Vir}_c; M^\bullet)_{(C,P_\bullet)}$.
\end{lemma}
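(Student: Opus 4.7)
The plan is to construct, for each regular vector field $\xi \in \mathscr{T}_C(C\setminus P_\bullet)$, a canonical lift $\iota(\xi) \in \mathscr{L}_{C\setminus P_\bullet}(\textup{Vir}_c)$ whose image under \eqref{LCPtosumLP} acts on each factor $M^i_{P_i}$ (and on $M^i_{P_i,\tau_i}$) as the Virasoro-algebra action of the Laurent expansion $\xi|_{P_i}$ at $P_i$. Once such a lift exists, both assertions of the lemma are immediate from the definitions \eqref{spacecon} and \eqref{spaceconCstar}, since elements of $\mathscr{L}_{C\setminus P_\bullet}(\textup{Vir}_c)$ act trivially on the coinvariant quotients.

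The lift is built from the conformal vector $\omega \in \textup{Vir}_c$, which by property (e) of \S\ref{V} encodes the Virasoro action via $Y^{M^i}(\omega,t_i) = \sum_p \omega^{M^i}_{(p+1)}\, t_i^{-p-2}$, with $\omega^{M^i}_{(p+1)}$ identified with $L_p$ on $M^i$. Locally in a formal coordinate $t$ with $\xi = f(t)\partial_t$, the element $\omega \otimes f(t)\,dt$ is a section of $\mathscr{V}_C \otimes \omega_C$ (for $V=\textup{Vir}_c$). Under change of coordinate, $\omega$ transforms non-tensorially with a Schwarzian correction weighted by $c/12$; paired with $f(t)\,dt$, this discrepancy falls in $\textrm{Im}\,\nabla$ thanks to the compatibility \S\ref{HCcompM}(ii) and the identity $Y(L_{-1}A,z)=\partial_z Y(A,z)$. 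The local classes then glue into a well-defined element $\iota(\xi) \in H^0(C\setminus P_\bullet,\mathscr{V}_C \otimes \omega_C / \textrm{Im}\,\nabla) = \mathscr{L}_{C\setminus P_\bullet}(\textup{Vir}_c)$. Applying Lemma~\ref{Lem:VonMcoordFree}, the operator $\alpha_{M^i}(\iota(\xi)_{P_i})$ is computed as $\textrm{Res}_{t_i=0}\, Y^{M^i}(\omega,t_i)\,f_i(t_i)\,dt_i$, which is precisely (up to the sign of an anti-homomorphism) the Virasoro action of $\xi|_{P_i}$ on $M^i_{P_i}$. The same formula applies on $M^i_{P_i,\tau_i}$ using the $\mathscr{L}_{P_i}(\textup{Vir}_c)$-action from \cite[Thm~7.3.10]{bzf}.

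The step I expect to be the main obstacle is the global well-definedness of $\iota(\xi)$ modulo $\textrm{Im}\,\nabla$: the Schwarzian corrections can produce central-extension contributions at each $P_i$, and for the construction to be consistent across $C\setminus P_\bullet$ these contributions must cancel globally. This is precisely where the residue theorem enters. The sum of the local Schwarzian discrepancies across the marked points equals the sum over $i$ of the residues at $P_i$ of a meromorphic $1$-form on $C$ that is regular on $C\setminus P_\bullet$, and this sum vanishes on the proper curve $C$ by the residue theorem. Once this cancellation is established, $\iota$ is a bona fide lift, the action of $\iota(\xi)$ on $\otimes_{i=1}^n M^i_{P_i}$ (and on $\otimes_{i=1}^n M^i_{P_i,\tau_i}$) reproduces the natural action of $\xi$, and the triviality of $\mathscr{T}_C(C\setminus P_\bullet)$ on both coinvariant spaces follows at once.
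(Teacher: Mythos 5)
Your proposal takes the same route as the paper's proof: both realize a regular vector field $\xi$ on $C\setminus P_\bullet$ inside $\mathscr{L}_{C\setminus P_\bullet}(\textrm{Vir}_c)$ via the conformal vector, and both invoke the residue theorem to dispose of the central contributions, so that the resulting element acts on $\otimes_i M^i_{P_i}$ (and on $\otimes_i M^i_{P_i,\tau_i}$) exactly as $\xi$ does; triviality on coinvariants is then immediate. The paper packages this as the statement, quoted from \cite[\S 19.6.5]{bzf}, that $\mathscr{L}_{C\setminus P_\bullet}(\textrm{Vir}_c)$ contains an extension of $\mathscr{T}_C(C\setminus P_\bullet)$ by $H^0(C\setminus P_\bullet,\omega_C/d\mathscr{O}_C)$, whose restriction to $\oplus_i \mathscr{L}_{P_i}(\textrm{Vir}_c)$ sends the central part into the hyperplane $\sum_i r_i=0$; since each central $\mathbb{C}$ is spanned by $|0\rangle_{[-1]}$, which acts as the identity, the ambiguity acts as $(\sum_i r_i)\cdot\mathrm{id}=0$.

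One step of your write-up is misstated, although your final paragraph essentially repairs it. The Schwarzian discrepancy of $\omega\otimes f\,dt$ under a coordinate change does \emph{not} fall in $\textrm{Im}\,\nabla$: it is a multiple of $|0\rangle\otimes g\,dt$ with $g\,dt$ in general not exact, so it lives in the central subquotient $\omega_C/d\mathscr{O}_C\cdot|0\rangle_{[-1]}$ rather than in the image of the connection --- indeed, if it did lie in $\textrm{Im}\,\nabla$ the residue theorem would be superfluous. Consequently $\iota(\xi)$ is not a canonical element of $\mathscr{L}_{C\setminus P_\bullet}(\textrm{Vir}_c)$; a lift exists (the extension admits a section over the affine open $C\setminus P_\bullet$), but only up to $H^0(C\setminus P_\bullet,\omega_C/d\mathscr{O}_C)$. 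What the residue theorem actually delivers is not well-definedness of the lift but triviality of the action of this ambiguity: any central ambiguity is represented by a $1$-form regular off $P_\bullet$, its restrictions to the punctured discs act by their residues times the identity, and these residues sum to zero. With that adjustment your argument coincides with the paper's.
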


\begin{proof}
When $n=1$,  one has an inclusion $\mathscr{T}_C(C\setminus P) \hookrightarrow \mathscr{L}_{P}(\textrm{Vir}_c)$ by \cite[\S 19.6.5]{bzf}.
Indeed, $\mathscr{L}_{C\setminus P}(\textrm{Vir}_c)$ contains as a Lie subalgebra an extension of $\mathscr{T}_C(C\setminus P)$ by $H^0(C\setminus P, \omega/d\mathscr{O})$, and by the residue theorem, the image of  $H^0(C\setminus P, \omega/d\mathscr{O})$ via the restriction $\mathscr{L}_{C\setminus P}(\textrm{Vir}_c)\rightarrow \mathscr{L}_{P}(\textrm{Vir}_c)$ is zero.
It follows that
\[
\mathscr{T}_C(C\setminus P) \hookrightarrow \textrm{Im}\left( \mathscr{L}_{C\setminus P}(\textrm{Vir}_c) \rightarrow \mathscr{L}_{P}(\textrm{Vir}_c) \right).
\]

For more marked points, as in \cite[\S 19.6.5]{bzf}, one has that 
\[
\textrm{Im}\left( \mathscr{L}_{C\setminus P_\bullet}(\textrm{Vir}_c) \rightarrow \oplus_{i=1}^n \mathscr{L}_{P_i}(\textrm{Vir}_c) \right)
\]
contains an extension of $\mathscr{T}_C(C\setminus P_\bullet)$ by the image of the map $\varphi$
\[
\begin{tikzcd}
H^0(C\setminus P_\bullet, \omega_C/d\mathscr{O}_C) \arrow[rightarrow]{r}{\varphi}\arrow[hookrightarrow]{d} & \oplus_{i=1}^n H^0(D^\times_{P_i}, \omega_C/d\mathscr{O}_C)\arrow[Isom]{d}  \arrow[hookrightarrow]{r} & \oplus_{i=1}^n \mathscr{L}_{P_i}(\textrm{Vir}_c)\\
 \mathscr{L}_{C\setminus P_\bullet}(\textrm{Vir}_c) &  \mathbb{C}^n &
\end{tikzcd}
\]
inside the central space (isomorphic to) $\mathbb{C}^n$ of $\oplus_{i=1}^n \mathscr{L}_{P_i}(\textrm{Vir}_c)$.
The map $\varphi$ is obtained by restricting sections. The space of sections of $\omega_C/d\mathscr{O}_C$ on $D^\times_{P_i}$ is isomorphic to $\mathbb{C}$ for each $i$,  as such sections  are identified by their residue at~$P_i$.
The inclusions above are induced from  $\omega_C\hookrightarrow \mathscr{V}ir_c\otimes \omega_C$, which gives an inclusion
$\omega_C/d\mathscr{O}_C\hookrightarrow  \mathscr{V}ir_c\otimes \omega_C/\textrm{Im}\nabla$. Here, $\mathscr{V}ir_c$ is the sheaf $\mathscr{V}_C$ for $V=\textrm{Vir}_c$.

As $C\setminus P_\bullet$ is affine, one has $H^0(C\setminus P_\bullet,  \omega_C)\twoheadrightarrow H^0(C\setminus P_\bullet, \omega_C/d\mathscr{O}_C)$.
By the residue theorem, the image of $\varphi$ consists of the hyperplane of points $(r_1,\dots,r_n)$ in $\mathbb{C}^n$ such that $r_1+\cdots+r_n=0$.
Following the identification of the Virasoro algebra in \eqref{eq:VirinLV}, each central space $\mathbb{C}\subset  \mathscr{L}_{P_i}(\textrm{Vir}_c)$ is identified with $\mathbb{C}|0\rangle_{[-1]}$. Thus, since $|0\rangle_{[-1]}$ acts as the identity, the image of $\varphi$ acts trivially.
\end{proof}

For any vertex operator algebra $V$ of central charge $c$, there is a vertex algebra homomorphism $\textrm{Vir}_c\rightarrow V$ \cite[\S 3.4.5]{bzf}.  This gives a surjection
$\mathbb{V}(\textup{Vir}_c; M^\bullet)_{(C,P_\bullet)} \twoheadrightarrow \mathbb{V}(V; M^\bullet)_{(C,P_\bullet)}$
and similarly for the coinvariants $\mathbb{V}^J(-; M^\bullet)_{(C,P_\bullet, \tau_\bullet)}$
\cite[\S 10.2.2]{bzf}.

\medskip

The following statement will be used in the proof of Theorem \ref{AtalgonCJ}.

\begin{lemma}
\label{UCPVircinUCPVLemma}
 $\mathscr{T}_C(C\setminus P_\bullet)$ 
acts trivially on   ${\mathbb{V}^J}(V; M^\bullet)_{(C,P_\bullet, \tau_\bullet)}$ and $\mathbb{V}(V; M^\bullet)_{(C,P_\bullet)}$.
\end{lemma}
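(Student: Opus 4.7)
The strategy is to leverage Lemma \ref{vect} for the Virasoro vertex algebra $\textup{Vir}_c$ and transport the triviality to $V$ along the canonical vertex algebra homomorphism $\textup{Vir}_c \to V$ (available for any VOA of central charge $c$, see \cite[\S 3.4.5]{bzf}). The goal is to show that the action of $\mathscr{T}_C(C\setminus P_\bullet)$ on $\otimes_{i} M^i_{P_i,\tau_i}$ and on $\otimes_{i} M^i_{P_i}$ factors through the action of $\mathscr{L}_{C\setminus P_\bullet}(V)$, and hence becomes zero upon passing to coinvariants.

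First I would observe that the vertex algebra morphism $\textup{Vir}_c \to V$ induces a morphism of the associated sheaves $\mathscr{V}ir_c \to \mathscr{V}_C$ on $C$ compatible with the flat logarithmic connections of Proposition \ref{propVconnection}. Taking global sections of the induced map on $\mathscr{V}\otimes\omega_C/\mathrm{Im}\,\nabla$ over $C\setminus P_\bullet$ and over the punctured disks $D_{P_i}^{\times}$ produces Lie algebra homomorphisms
\[
\mathscr{L}_{C\setminus P_\bullet}(\textup{Vir}_c) \longrightarrow \mathscr{L}_{C\setminus P_\bullet}(V), \qquad \mathscr{L}_{P_i}(\textup{Vir}_c) \longrightarrow \mathscr{L}_{P_i}(V),
\]
compatible with the restriction homomorphisms \eqref{LCPtosumLP}. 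Because the intrinsic Virasoro action on each $V$-module $M^i$ is, after the identification \eqref{eq:VirinLV}, precisely the action of $\mathfrak{L}(\textup{Vir}_c)$ pulled back through $\textup{Vir}_c \to V$, the resulting action of $\mathscr{L}_{P_i}(\textup{Vir}_c)$ on $M^i_{P_i,\tau_i}$ obtained from Lemma \ref{Lem:VonMcoordFree} coincides with the restriction of the $\mathscr{L}_{P_i}(V)$-action along the above Lie algebra map. Consequently there is a well-defined surjection of coinvariants $\mathbb{V}^J(\textup{Vir}_c;M^\bullet)_{(C,P_\bullet,\tau_\bullet)} \twoheadrightarrow \mathbb{V}^J(V;M^\bullet)_{(C,P_\bullet,\tau_\bullet)}$, and analogously for $\mathbb{V}(-;M^\bullet)_{(C,P_\bullet)}$.

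Next, the proof of Lemma \ref{vect} exhibits $\mathscr{T}_C(C\setminus P_\bullet)$ as a Lie subalgebra of the image of $\mathscr{L}_{C\setminus P_\bullet}(\textup{Vir}_c) \to \oplus_i \mathscr{L}_{P_i}(\textup{Vir}_c)$, modulo the central hyperplane $\{(r_1,\dots,r_n)\mid \sum r_i=0\}$ which, via the identification \eqref{eq:VirinLV}, lies in the span of the vacuum classes $|0\rangle_{[-1]}$ and hence acts as a scalar (in fact, by the residue theorem, as zero) on any tensor product of $V$-modules. Composing with the maps of the previous paragraph, the action of each $\xi\in\mathscr{T}_C(C\setminus P_\bullet)$ on $\otimes_i M^i_{P_i,\tau_i}$ equals the action of some $\widetilde\xi\in \mathscr{L}_{C\setminus P_\bullet}(V)$. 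Since the space of coinvariants $\mathbb{V}^J(V;M^\bullet)_{(C,P_\bullet,\tau_\bullet)}$ is by definition the quotient by the image of $\mathscr{L}_{C\setminus P_\bullet}(V)$, it follows at once that $\xi$ acts by zero on it. The same argument, descended along the $(\mathbb{G}_m)^n$-quotient of \S \ref{DesM}, handles $\mathbb{V}(V;M^\bullet)_{(C,P_\bullet)}$.

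The only non-routine point is the compatibility of actions used in the second paragraph: namely, that the Lie algebra map $\mathscr{L}_{C\setminus P_\bullet}(\textup{Vir}_c)\to\mathscr{L}_{C\setminus P_\bullet}(V)$ intertwines the Virasoro action on a $V$-module with its $\mathfrak{L}(V)$-action restricted to the Virasoro subalgebra. This is built into the construction of $\mathscr{V}_C$ and $\mathscr{M}_{CJ}$ via \S \ref{UV}--\S \ref{HCcompM}, and unraveling it via \eqref{eq:VirinLV} completes the argument.
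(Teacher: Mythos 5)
Your proposal is correct and follows essentially the same route as the paper: the paper's proof consists of noting that the homomorphism $\textrm{Vir}_c \to V$ yields the inclusions $\mathscr{L}_{C\setminus P_\bullet}(\textrm{Vir}_c)\cdot(\otimes_i M^i_{P_i,\tau_i}) \hookrightarrow \mathscr{L}_{C\setminus P_\bullet}(V)\cdot(\otimes_i M^i_{P_i,\tau_i})$ (and likewise for $M^i_{P_i}$), so that the statement follows from Lemma \ref{vect}. You merely spell out in more detail the compatibility of the $\mathscr{L}_{P_i}(\textrm{Vir}_c)$- and $\mathscr{L}_{P_i}(V)$-actions that underlies those inclusions, which is a faithful unpacking rather than a different argument.
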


\begin{proof}
Equivalent to the surjection on coinvariants referred to above, one has inclusions
\begin{align*}
\mathscr{L}_{C\setminus P_\bullet}(\textrm{Vir}_c) \cdot \left( \otimes_{i=1}^n M^i_{P_i} \right) \hookrightarrow \mathscr{L}_{C\setminus P_\bullet}(V)  \cdot \left( \otimes_{i=1}^n M^i_{P_i} \right),\\
\mathscr{L}_{C\setminus P_\bullet}(\textrm{Vir}_c) \cdot \left( \otimes_{i=1}^n M^i_{P_i,\tau_i} \right) \hookrightarrow \mathscr{L}_{C\setminus P_\bullet}(V)  \cdot \left( \otimes_{i=1}^n M^i_{P_i,\tau_i} \right).
\end{align*}
Therefore, Lemma \ref{UCPVircinUCPVLemma} follows from Lemma \ref{vect}.
\end{proof}


\section{Sheaves of coinvariants}
\label{SheavesOfCoinvariants}
In this section we define the sheaves of coinvariants $\widetriangle{\mathbb{V}}_g(V; M^\bullet)$, $\mathbb{V}^J_g(V; M^\bullet)$, and ${\mathbb{V}}_g(V; M^\bullet)$ on the moduli spaces 
$\widetriangle{\mathcal{M}}_{g,n}$,  $\overline{\mathcal{J}}_{g,n}^{1,\times}$, and $\overline{\mathcal{M}}_{g,n}$, respectively.  We will sometimes drop the $g$ or the $V$ in the notation (or both), when the context makes it clear.

Working in families, one defines a sheaf of Lie subalgebras $\mathcal{L}_{\mathcal{C}\setminus\mathcal{P}_\bullet}(V)$ with fibers equal to the Lie algebra $\mathscr{L}_{C\setminus P_\bullet}(V)$ from \eqref{UCminusPV}. This is used  to define the sheaf of coinvariants  
$\widetriangle{\mathbb{V}}_g(V; M^\bullet)$ on $\widetriangle{\mathcal{M}}_{g,n}$ (\S \ref{SheafOfCoinvariantsTriangle}). In \S \ref{FirstDescent} we will see that $\widetriangle{\mathbb{V}}_g(V; M^\bullet)$ is an $(\textrm{Aut}_+\mathcal{O})^n$-equivariant $\mathcal{O}_{\widetriangle{\mathcal{M}}_{g,n}}$-module, hence descends along the principal $(\textrm{Aut}_+\mathcal{O})^n$-bundle $\widetriangle{\pi}\colon\widetriangle{\mathcal{M}}_{g,n}\rightarrow \overline{\mathcal{J}}_{g,n}^{1,\times}$, allowing one to describe the sheaf $\mathbb{V}^J_g(V; M^\bullet)$ on $\overline{\mathcal{J}}_{g,n}^{1,\times}$ (Definition~\ref{SheafOfCoinvariants}). A second descent in \S \ref{SecondDescent} gives  the sheaf ${\mathbb{V}}_g(V; M^\bullet)$ on $\overline{\mathcal{M}}_{g,n}$ (Definition~\ref{SOCM}).   
The structure and relationships between the three sheaves of coinvariants are summarized in Figure \ref{fig:Sheavesofcoinvariants}.

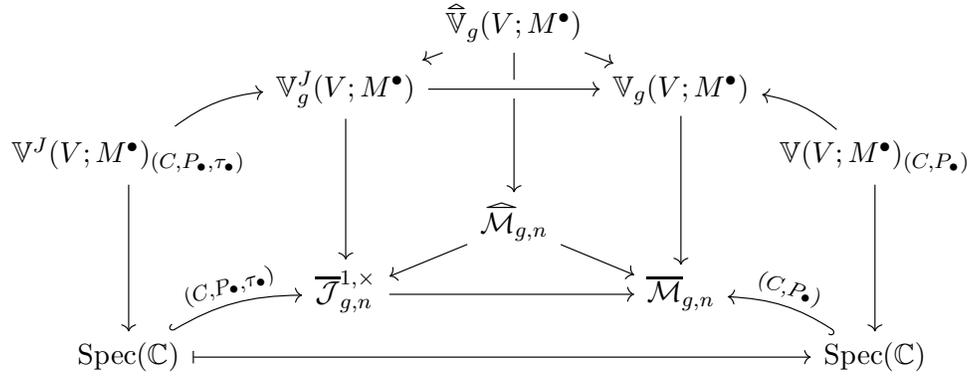
\begin{figure}[htb]
\centering
\begin{tikzcd}[back line/.style={draw}, row sep=0.3em, column sep=0.2em]
{}&& \widetriangle{\mathbb{V}}_g(V; M^\bullet) \ar[rightarrow]{dl} \ar[rightarrow]{dr} \ar[rightarrow, back line]{ddd} &&\\
{}&\mathbb{V}^J_g(V; M^\bullet) \ar[rightarrow, crossing over]{rr} \ar[rightarrow]{ddd}{} &&\mathbb{V}_g(V; M^\bullet) \ar[rightarrow]{ddd}{} &\\
{\mathbb{V}^J}(V; M^\bullet)_{(C,P_\bullet,\tau_\bullet)}\ar[rightarrow]{ddd} \ar[rightarrow, bend left=15]{ur}  &&&&  \mathbb{V}(V; M^\bullet)_{(C,P_\bullet)}\ar[rightarrow]{ddd} \ar[rightarrow, bend left=-15]{ul} \\
&& \widetriangle{\mathcal{M}}_{g,n} \ar[rightarrow]{dl} \ar[rightarrow]{dr} &&\\
{}& \overline{\mathcal{J}}_{g,n}^{1,\times} \ar[rightarrow]{rr}{} && \overline{\mathcal{M}}_{g,n} &\\
\textrm{Spec}(\mathbb{C}) \ar[hookrightarrow, bend left=15, sloped, pos=0.5]{ur}{(C,P_\bullet,\tau_\bullet)} \ar[|->]{rrrr} &&&&\textrm{Spec}(\mathbb{C}) \ar[hookrightarrow, bend left=-15, sloped, pos=0.5]{ul}{(C,P_\bullet)} 
\end{tikzcd}
   \caption{The structure of sheaves of coinvariants.}
   \label{fig:Sheavesofcoinvariants}
\end{figure}

\subsection{Sheaves of Lie algebras on $\overline{\mathcal{M}}_{g,n}$ and on $\widetriangle{\mathcal{M}}_{g,n}$}\label{Lsheaf}
 We will first define a sheaf of Lie algebras $\mathcal{L}_{\mathcal{C}\setminus\mathcal{P}_\bullet}(V)$ on $\overline{\mathcal{M}}_{g,n}$, which we can regard as a sheaf on $\widetriangle{\mathcal{M}}_{g,n}$ by choosing formal coordinates at the marked points (that is, by pulling $\mathcal{L}_{\mathcal{C}\setminus\mathcal{P}_\bullet}(V)$  back to $\widetriangle{\mathcal{M}}_{g,n}$ along the projection $\widetriangle{\mathcal{M}}_{g,n} \rightarrow \overline{\mathcal{M}}_{g,n}$.

  Identify $\pi_{n+1}\colon \overline{\mathcal{M}}_{g,n+1}\rightarrow  \overline{\mathcal{M}}_{g,n}$ with the universal curve  $\pi_{n+1}\colon\overline{\mathcal{C}}_{g,n}\rightarrow\overline{\mathcal{M}}_{g,n}$. Let $\overline{\mathcal{E}}_{g,n}$ be the moduli space of stable $(n+1)$-pointed genus $g$ curves, together with a formal coordinate at the last marked point. The natural projection 
$\overline{\mathcal{E}}_{g,n}\rightarrow \overline{\mathcal{M}}_{g,n+1}$ realizes $\overline{\mathcal{E}}_{g,n}$ as a principal $(\textrm{Aut}\,\mathcal{O})$-bundle on~$\overline{\mathcal{C}}_{g,n}$.

The group $\textrm{Aut}\,\mathcal{O}$ acts equivariantly on the trivial vector bundle $\overline{\mathcal{E}}_{g,n} \times V$ over $\overline{\mathcal{E}}_{g,n}$. The quotient of $\overline{\mathcal{E}}_{g,n} \times V$ modulo the action of $\textrm{Aut}\,\mathcal{O}$ descends to a vector bundle
\[
{V}_{\overline{\mathcal{C}}_{g,n}} := \overline{\mathcal{E}}_{g,n} \mathop{\times}_{\textrm{Aut}\,\mathcal{O}} V
\] 
on $\overline{\mathcal{C}}_{g,n}$. Let $\mathcal{V}_{\overline{\mathcal{C}}_{g,n}}$ denote the sheaf of sections of ${V}_{\overline{\mathcal{C}}_{g,n}}$.
As in \S\ref{VC}, the trivial bundle $\overline{\mathcal{E}}_{g,n} \times V$ has a flat logarithmic connection along the fibers of   $\pi_{n+1}\colon\overline{\mathcal{C}}_{g,n}\rightarrow\overline{\mathcal{M}}_{g,n}$. The connection is $(\textrm{Aut}\,\mathcal{O})$-equivariant, hence descends to a flat logarithmic connection on $\mathcal{V}_{\overline{\mathcal{C}}_{g,n}}$ along the fibers of   $\pi_{n+1}$:
\[
\nabla \colon \mathcal{V}_{\overline{\mathcal{C}}_{g,n}} \rightarrow \mathcal{V}_{\overline{\mathcal{C}}_{g,n}} \otimes \omega_{\pi_{n+1}}.
\]
Define
\begin{equation}\label{LSheaf}
\mathcal{L}_{\mathcal{C}\setminus\mathcal{P}_\bullet}(V):= \left(\pi_{n+1}\right)_* \textrm{coker}\left( \nabla|_{\overline{\mathcal{C}}_{g,n}\setminus\mathcal{P}_\bullet} \right).
\end{equation}
Here $\mathcal{P}_\bullet$ denotes the union of the images of the $n$ sections $\overline{\mathcal{M}}_{g,n}\rightarrow\overline{\mathcal{C}}_{g,n}$ given by the marked points. From \cite[Cor.~19.4.14]{bzf}, $\mathcal{L}_{\mathcal{C}\setminus\mathcal{P}_\bullet}(V)$ is a sheaf of Lie algebras on $\overline{\mathcal{M}}_{g,n}$. Pulling back to $\widetriangle{\mathcal{M}}_{g,n}$, after a choice of coordinates, gives a sheaf that we still call $\mathcal{L}_{\mathcal{C}\setminus\mathcal{P}_\bullet}(V)$ on
$\widetriangle{\mathcal{M}}_{g,n}$.

Next, we consider the sheaf of Lie algebras
\begin{equation}\label{LVSheaf}
{\mathcal{L}}(V)^n := \mathfrak{L}(V)^{\oplus n} \,\,\widehat\otimes\, \mathcal{O}_{\widetriangle{\mathcal{M}}_{g,n}}
\end{equation}
on $\widetriangle{\mathcal{M}}_{g,n}$.
By restricting sections, one can define a homomorphism of sheaves of Lie algebras: 
\[
\varphi\colon \mathcal{L}_{\mathcal{C}\setminus\mathcal{P}_\bullet}(V)\rightarrow {\mathcal{L}}(V)^n. 
\]

\subsection{The sheaf  $\widetriangle{\mathbb{V}}_g(V; M^\bullet)$ on $\widetriangle{\mathcal{M}}_{g,n}$}
\label{SheafOfCoinvariantsTriangle}
Let $\left(\mathcal{C}\rightarrow S, P_\bullet, t_\bullet\right)$ be a family of coordinatized stable pointed curves over a smooth base $S$, with $n$ sections $P_i\colon S\rightarrow \mathcal{C}$ and formal coordinates $t_i$. 
Assume that $\mathcal{C}\setminus P_\bullet(S)$ is affine over $S$ (e.g., $S=\widehat{\mathcal{M}}_{g,n}$). Let $V$ be a vertex operator algebra and let $M^\bullet=(M^1,\dots, M^n)$ be  $V$-modules. 
With notation as above, we set

\begin{definition}
\label{Vtriangle}
The \textit{sheaf of coinvariants} $\widetriangle{\mathbb{V}}(V; M^\bullet)_{(\mathcal{C}\rightarrow S,P_\bullet, t_\bullet)}$ on $S$ is the quasi-coherent sheaf  
\begin{equation*}
\widetriangle{\mathbb{V}}(V; M^\bullet)_{(\mathcal{C}\rightarrow S,P_\bullet, t_\bullet)} := 
\otimes_{i=1}^n M^i \otimes \mathcal{O}_{S} \Big/ \mathcal{L}_{\mathcal{C}\setminus\mathcal{P}_\bullet}(V) \cdot \left( \otimes_{i=1}^n M^i \otimes  \mathcal{O}_{S} \right).
\end{equation*}
 \end{definition}

We can remove the condition that $\mathcal{C}\setminus P_\bullet(S)$ be affine over $S$ as in the construction of
coinvariants obtained from representations of affine Lie algebras, see e.g., \cite{fakhr}, \cite{loowzw}. First we review the property known as propagation of vacua.

\subsubsection{}
Let $\left(\mathcal{C}\rightarrow S, P_\bullet, t_\bullet\right)$ be a family of coordinatized stable pointed curves  as above. 
Let \mbox{$Q\colon S\rightarrow \mathcal{C}$} be a section such that $Q(S)$ is inside the smooth locus in $\mathcal{C}$ and is disjoint from $P_i(S)$, for each $i$, and let $r$ be a formal coordinate at $Q(S)$. Given $V$-modules $M^i$, for $i=1,\dots,n$, set $M^\bullet=\otimes_{i=1}^n M^i$.

\begin{theorem}[Propagation of vacua {\cite[Thm 3.6]{gc}}]
\label{thm:POV}
Let $V=\oplus_{i\geq 0} V_i$ be a vertex operator algebra  such that $V_0\cong \mathbb{C}$.
Assume $\mathcal{C}\setminus P_\bullet(S)$ is affine over $S$. 
The linear map 
\[
M^\bullet \rightarrow M^\bullet \otimes V, \qquad u\mapsto u\otimes |0\rangle
\]
 induces a canonical $\mathscr{O}_S$-module isomorphism
\[
\varphi\colon \widetriangle{\mathbb{V}}\left(V;M^\bullet\right)_{\left(\mathcal{C}\rightarrow S, P_\bullet, t_\bullet\right)} \xrightarrow{\cong}
\widetriangle{\mathbb{V}}\left(V;M^\bullet\otimes V\right)_{\left(\mathcal{C}\rightarrow S, P_\bullet \sqcup Q, t_\bullet\sqcup r\right)}.
\]
Varying $(Q,r)$, the induced isomorphisms are compatible. Moreover, as $|0\rangle$ is fixed by the action of $\mathrm{Aut}\,\mathcal{O}$, the isomorphism is equivariant with respect to change of coordinates.
\end{theorem}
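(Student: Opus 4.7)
The plan is to establish well-definedness, surjectivity, and injectivity of $\varphi$ on the coinvariants, in that order. The compatibility statements for varying $(Q,r)$ and $\mathrm{Aut}\,\mathcal{O}$-equivariance will then follow formally, since $|0\rangle \in V_0$ satisfies $L_p|0\rangle = 0$ for $p \geq 0$ and is in particular fixed by $\mathrm{Aut}\,\mathcal{O}$ (cf.~\S\ref{Action}).

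\emph{Well-definedness.} The tautological inclusion $\mathcal{L}_{\mathcal{C}\setminus \mathcal{P}_\bullet}(V) \hookrightarrow \mathcal{L}_{\mathcal{C}\setminus(\mathcal{P}_\bullet \sqcup Q)}(V)$ realizes each $\mu \in \mathcal{L}_{\mathcal{C}\setminus \mathcal{P}_\bullet}(V)$ as a section regular at $Q$, whose expansion in the coordinate $r$ lies in $V \otimes \mathbb{C}\llbracket r \rrbracket\, dr$; it therefore acts on the $V$-factor at $Q$ only through modes $A_{(i)}$ with $i \geq 0$. By the vacuum axiom (b) of \S\ref{V}, these annihilate $|0\rangle$, so $\mu \cdot (u \otimes |0\rangle) = (\mu \cdot u) \otimes |0\rangle$ and $\varphi$ descends to the coinvariants.

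\emph{Surjectivity.} The key construction is, for each homogeneous $A \in V$, a section $\mu_A \in \mathcal{L}_{\mathcal{C}\setminus(\mathcal{P}_\bullet \sqcup Q)}(V)$ with principal part $A \otimes r^{-1}\, dr$ at $Q$. Since $\mathcal{C}\setminus \mathcal{P}_\bullet(S)$ is affine over $S$, the higher cohomology of any coherent $\mathcal{O}$-module on it vanishes; applying this to the finite-rank filtered pieces $\mathcal{V}_C^{\leq k} \otimes \omega_C$ and passing to the direct limit, a Mittag--Leffler-type argument extends any prescribed principal part at $Q$ to a section on $\mathcal{C}\setminus(\mathcal{P}_\bullet \sqcup Q)(S)$. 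Since $\mu_A$ acts as zero on coinvariants, its $Q$-component contributes $u \otimes A_{(-1)}|0\rangle = u \otimes A$, while its $P_i$-components (where $\mu_A$ is regular) contribute sums of elements of the form $u' \otimes |0\rangle$, as the action at $P_i$ touches only the $M^i$-factor. Rearranging shows $u \otimes A$ is congruent to an element in the image of $\varphi$; the base case $A \in V_0 \cong \mathbb{C} \cdot |0\rangle$ is trivial.

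\emph{Injectivity.} This is the main obstacle. I would construct a candidate inverse $\psi$ via the surjectivity reduction, $\psi([u \otimes A]) := [u']$ where $u \otimes A \equiv u' \otimes |0\rangle$ is the relation produced above, and show $\psi$ is independent of the choices of $\mu_A$. The affine vanishing used in the surjectivity step also yields a splitting $\mathcal{L}_{\mathcal{C}\setminus(\mathcal{P}_\bullet \sqcup Q)}(V) = \mathcal{L}_{\mathcal{C}\setminus \mathcal{P}_\bullet}(V) + \mathcal{L}_Q^{\mathrm{pol}}(V)$, where $\mathcal{L}_Q^{\mathrm{pol}}(V)$ collects extensions of prescribed principal parts at $Q$. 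Splitting each $\mu_k$ in a given relation $u \otimes |0\rangle = \sum_k \mu_k \cdot \xi_k$ accordingly, the regular part contributes to $\mathcal{L}_{\mathcal{C}\setminus \mathcal{P}_\bullet}(V)\cdot(\otimes_i M^i)$ after projecting the $V$-factor onto $V_0 \cong \mathbb{C}$, and the polar part is eliminated by induction on the degree filtration of $V$, using the degree-shift formula~\eqref{degAi}; the hypothesis $V_0 \cong \mathbb{C}$ is essential to cut off the induction. The remaining assertions (compatibility in $(Q,r)$ and $\mathrm{Aut}\,\mathcal{O}$-equivariance) then follow from the canonical character of $|0\rangle$ and the independence of the Mittag--Leffler construction from auxiliary data.
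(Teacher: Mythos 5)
Your treatment of well-definedness and surjectivity essentially reproduces the paper's argument: regularity of $\sigma$ at $Q$ plus the vacuum axiom $A_{(i)}|0\rangle=0$ for $i\geq 0$ gives well-definedness, and a section of $\mathscr{V}_C\otimes\omega_C$ with a simple pole at $Q$ realizing $A_{(-1)}|0\rangle=A$ (produced in the paper by Riemann--Roch, by you via affine vanishing --- the same mechanism) gives surjectivity. One slip: $\mu_A$ cannot be regular at the $P_i$; for such a section to exist one must allow high-order poles along $P_\bullet$ (the paper says so explicitly). This is harmless for your conclusion, since the $P_i$-components act only on the $M^i$-factors and leave the vacuum at $Q$ untouched, but the parenthetical ``(where $\mu_A$ is regular)'' is false.

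The genuine gap is in the injectivity step. Defining $\psi([u\otimes A]):=[u']$ via the surjectivity reduction and then ``showing $\psi$ is well-defined'' does not reduce the problem: the assertion that the reduction sends $\mathscr{L}_{C\setminus(P_\bullet\sqcup Q)}(V)\cdot(M^\bullet\otimes V)$ into $\mathscr{L}_{C\setminus P_\bullet}(V)\cdot M^\bullet$ already contains the full content of injectivity (apply it to $u\otimes|0\rangle$), so all the work is concentrated in a verification your sketch does not supply. Worse, the sketch conflates two different maps: the $\psi$ defined by the one-step reduction is not ``projection of the $V$-factor onto $V_0$'' --- for $B$ of positive degree the reduction produces a nonzero $u'$ while the $V_0$-component of $B$ vanishes --- so the claim that the regular part of each $\mu_k$ is handled ``after projecting onto $V_0$'' is inconsistent with your definition, and the ``induction on the degree filtration'' that is supposed to dispose of the polar part is never set up (what is the inductive statement, and through what mechanism does $V_0\cong\mathbb{C}$ terminate it?). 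The paper avoids all of this by dualizing: one extends a conformal block $\Psi$ on $M^\bullet$ to a functional $\widetilde\Psi$ on $M^\bullet\otimes V$ by recursion on the length of a PBW-type expression $B=B^k_{(i_k)}\cdots B^1_{(i_1)}|0\rangle$ (this is precisely where $V_0\cong\mathbb{C}$ enters, via the existence of such spanning sets from \cite{gn}), peeling off one mode at a time with a section $\mu$ satisfying $\mu_Q\in r^{i_k}dr+r^m\mathscr{O}_S\llbracket r\rrbracket dr$ so that only $B^k_{(i_k)}$ survives at $Q$; surjectivity of $\varphi^\vee$ then yields injectivity of $\varphi$. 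A direct inverse on coinvariants could in principle be made to work, but you would have to rebuild exactly this recursion and its consistency checks, at which point the dual formulation is the cleaner bookkeeping.
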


This result is proven in \cite[Thm 3.6]{gc} in the generality we need. We sketch the proof below for completeness. The proof uses that $V$ admits a basis of PBW-type  (\cite{gn}, see also \cite[\S 3]{AbeBuhlDong}); and for this,  $V$ is not required to be $C_2$-cofinite.
Prior to \cite{gc}, propagation vacua has been shown in case the vertex operator algebra $V$ is quasi-primary generated with $V_0\cong \mathbb{C}$, for either \textit{fixed} smooth curves 
 \cite{ZhuGlobal, an1}, and for \textit{rational} stable pointed curves \cite{NT}.  Although in these papers a different Lie algebra is used in the construction, the resulting vector space of coinvariants is isomorphic to the vector space of coinvariants that we consider, in cases where both Lie algebras are defined \cite[Proposition A.2.1]{DGT2}.
 For  a fixed smooth curve, the analogous  statement for spaces of conformal blocks  has been proved in \cite[\S 10.3.1]{bzf}, generalizing the similar result from \cite{tuy}
for sheaves of coinvariants obtained from representations of affine Lie algebras. 

\begin{proof}[Sketch of proof of Theorem \ref{thm:POV}]
We start by showing that the induced map is well-defined. Assume $\sigma\in \mathscr{L}_{C\setminus P_\bullet}(V)$, $u\in M^\bullet$, and $f\in \mathscr{O}_S$. 
Since $\sigma$ is regular at $Q$, its Laurent series expansion $\sigma_Q$ at $Q$ is a linear combination of $\sum_{i\geq 0} A\otimes r^i$, for  $A\in V$. From the axiom on the vacuum vector, one has $\sum_{i\geq 0} A_{(i)}|0\rangle=0$, hence $\sigma_Q\left(|0\rangle \right)=0$.
It follows that
\[
\varphi\left(\sigma\left( u\otimes f \right)\right) =\varphi\left(\sigma\left( u\right) \otimes f \right) 
=  \sigma( u)\otimes |0\rangle \otimes f  + u\otimes \sigma_Q \left( |0\rangle \right)\otimes f= \sigma \left( u\otimes |0\rangle \otimes f\right).
\]
Since $\mathscr{L}_{C\setminus P_\bullet}(V)\subset \mathscr{L}_{C\setminus (P_\bullet\sqcup Q)}(V)$, this shows that $\varphi$ maps  zero  to zero, hence it is well defined.

\smallskip

To show that $\varphi$ is surjective: Assume $u\otimes B \otimes f\in M^\bullet \otimes V \otimes \mathscr{O}_S$.
From the axiom on the vacuum, one has $B=B_{(-1)}|0\rangle$. By an application of the Riemann-Roch theorem, there exists a section $\mu$ of the relative dualizing sheaf $\omega_{\mathcal{C}/S}$ having at most a simple pole at $Q$  and regular on $C\setminus (P_\bullet \sqcup Q)$ --- such a section can be found by requiring poles of high order at the points~$P_\bullet$. To simplify the notation, rescale $\mu$ so that it has residue $1$ at $Q$. 
Find a section $\mathcal{B}$ of $\mathcal{V}_{\mathcal{C}}\rightarrow \mathcal{C}$ such that its restriction to $Q$ is $\mathcal{B}_Q=B$.
By the axiom on the vacuum, the Laurent series expansion $B\otimes \mu_Q$ of $\mathcal{B}\otimes \mu$ at $Q$ satisfies
$\left(B\otimes \mu_Q\right)|0\rangle = B_{(-1)} |0\rangle=B$. Hence one has
\begin{align*}
u\otimes B \otimes f &= u\otimes \left(B\otimes \mu_Q\right)\left(|0\rangle\right) \otimes f\\
&\equiv  - \sum_{i=1}^n u_1 \otimes \cdots \otimes \left( \mathcal{B}_{P_i}\otimes \mu_{P_i} \right)(u_i) \otimes \cdots u_n\otimes |0\rangle \otimes f \,\,\, \mbox{mod } \mathscr{L}_{C\setminus (P_\bullet\sqcup Q)}(V)\\
&= -\varphi\left(\sum_{i=1}^n u_1 \otimes \cdots \otimes\left( \mathcal{B}_{P_i}\otimes \mu_{P_i} \right)(u_i) \otimes \cdots \otimes u_n\otimes f \right)
\end{align*}
where $u=u_1\otimes \cdots \otimes u_n\in M^\bullet$.
This completes the proof of surjectivity.

\smallskip

To show the injectivity of $\varphi$, one shows the surjectivity of the dual map
\[
\varphi^\vee \colon \mathrm{Hom}\left(\widetriangle{\mathbb{V}}\left(V;M^\bullet\otimes V\right)_{\left(\mathcal{C}\rightarrow S, P_\bullet \sqcup Q, t_\bullet\sqcup r\right)} ,\mathscr{O}_S\right)\rightarrow 
\mathrm{Hom}\left(\widetriangle{\mathbb{V}}\left(V;M^\bullet\right)_{\left(\mathcal{C}\rightarrow S, P_\bullet, t_\bullet\right)} ,\mathscr{O}_S\right).
\]
Given a conformal block $\Psi$ in $\mathrm{Hom}\left(\widetriangle{\mathbb{V}}\left(V;M^\bullet\right)_{\left(\mathcal{C}\rightarrow S, P_\bullet, t_\bullet\right)} ,\mathscr{O}_S\right)$, one defines a conformal block $\widetilde{\Psi}$ in $\mathrm{Hom}\left(\widetriangle{\mathbb{V}}\left(V;M^\bullet\otimes V\right)_{\left(\mathcal{C}\rightarrow S, P_\bullet \sqcup Q, t_\bullet\sqcup r\right)} ,\mathscr{O}_S\right)$ so that $\varphi^\vee\left( \widetilde{\Psi}\right)={\Psi}$. 
Start by defining $\widetilde{\Psi}\left(u\otimes |0\rangle\right) = \Psi(u)$, for $u\in M^\bullet$.
In general, assume $u\otimes B\in M^\bullet \otimes V$. 
Since $V_0\cong \mathbb{C}$, the vertex operator algebra $V$ admits a spanning set of PBW-type \cite{gn}. Choose a basis of $V$ of PBW-type, and write
$B=B^k_{(i_k)} \cdots B^1_{(i_i)} |0\rangle$, for some $B^i\in V$ determined by such basis. 
Define $\overline{B}:=A^{k-1}_{(i_{k-1})} \cdots A^1_{(i_i)} |0\rangle$, that is, $B=A^k_{(i_k)} \overline{B}$.  Choose an integer $m$ such that $B^k_{(i)}\overline{B}=0$, for all $i\geq m$.
One may find a section $\mu$ of $\omega_{\mathcal{C}/S}$ such that  $\mu_Q\in r^{i_k}dr+ r^m\mathscr{O}_S\llbracket r\rrbracket dr$ and is regular on $C\setminus (P_\bullet \sqcup Q)$;  and a section $\mathcal{B}$ of $\mathcal{V}_{\mathcal{C}}\rightarrow \mathcal{C}$ such that its restriction to $Q$ is $\mathcal{B}_Q=B^k$.
Define $\widetilde{\Psi}$ so that
\[
\widetilde{\Psi}(u\otimes B) = -\widetilde{\Psi}\left( \sum_{i=1}^n u_1\otimes \cdots \otimes \left(\mathcal{B}_{P_i}\otimes \mu_{P_i}\right)(u_i) \otimes \cdots \otimes u_n \otimes \overline{B} \right),
\]
where $u=u_1\otimes \cdots \otimes u_n\in M^\bullet$. By recursion on $k$, this defines the linear functional $\widetilde{\Psi}$ of $M^\bullet\otimes V$.

One  verifies that $\widetilde{\Psi}$ vanishes on $\mathscr{L}_{\mathcal{C}\setminus (P_\bullet \sqcup Q)}(V)\left( M^\bullet \otimes V\right)$, so that $\widetilde{\Psi}$ is indeed in the source of~$\varphi^\vee$. 
Finally, by definition of $\varphi^\vee$, one has 
$\varphi^\vee\left( \widetilde{\Psi}\right)(u)=\widetilde{\Psi}(u\otimes |0\rangle)$ for $u\in M^\bullet$.
This shows that $\varphi^\vee\left( \widetilde{\Psi}\right)=\Psi$ and completes the proof of the statement.
\end{proof}

\subsubsection{} Consider a family of stable pointed curves $\left(\mathcal{C}\rightarrow S, P_\bullet\right)$ as above, but without assuming that $\mathcal{C}\setminus P_\bullet(S)$ be affine over $S$. After an \'etale base change, we can assume that the family $\left(\mathcal{C}\rightarrow S, P_\bullet\right)$ has $m$ additional sections $Q_i\colon S \rightarrow\mathcal{C}$ such that $\mathcal{C}\setminus (P_\bullet(S)\sqcup Q_\bullet(S))$ is affine over $S$ and $\left(\mathcal{C}\rightarrow S, P_\bullet\sqcup Q_\bullet\right)$ is stable. Let $t_\bullet$ and $r_\bullet$ be the formal coordinates at $P_\bullet(S)$ and $Q_\bullet(S)$. By Definition \ref{Vtriangle} we can define
\[
\widetriangle{\mathbb{V}}(V; M^\bullet\cup(V,\dots,V))_{(\mathcal{C}\rightarrow S,P_\bullet\sqcup Q_\bullet,  t_\bullet\sqcup r_\bullet)}
\] 
and thanks to Theorem \ref{thm:POV} this is independent of the choice of $(Q_\bullet, r_\bullet)$. We then define 
\[
\widetriangle{\mathbb{V}}(V; M^\bullet)_{(\mathcal{C}\rightarrow S,P_\bullet,  t_\bullet)}
\]
as the sheaf associated to the presheaf which assigns to each $U \subset S$ the module
\[
\varinjlim_{(Q_\bullet, r_\bullet)} \widetriangle{\mathbb{V}}(V; M^\bullet\cup(V,\dots,V))_{(\mathcal{C} \rightarrow U ,P_\bullet\sqcup Q_\bullet,  t_\bullet\sqcup r_\bullet)}, 
\] 
where $(Q_\bullet, r_\bullet)$ runs over all sections of $\mathcal{C}_U  \to U$ such that $\mathcal{C}_U \setminus ( P_\bullet(U) \sqcup Q_\bullet(U))$ is affine over $U$ and $\left(\mathcal{C}\rightarrow U, P_\bullet\sqcup Q_\bullet\right)$ is stable. 

As the construction of the sheaf of coinvariants commutes with base change, one obtains a sheaf of coinvariants on $\widetriangle{\mathcal{M}}_{g,n}$, denoted $\widetriangle{\mathbb{V}}_g (V; M^\bullet)$.

\subsection{Two descents along torsors}

\subsubsection{The first descent to define ${\mathbb{V}^J}(V; M^\bullet)$ on $J:=\overline{\mathcal{J}}_{g,n}^{1,\times}$}\label{FirstDescent}
The group $(\textrm{Aut}_+\mathcal{O})^n$ acts by conjugation on $\mathfrak{L}(V)^{\oplus n}$. 
Together with the action of $(\textrm{Aut}_+\mathcal{O})^n$ on $\widetriangle{\mathcal{M}}_{g,n}$, this gives rise to an equivariant action of $(\textrm{Aut}_+\mathcal{O})^n$ on the sheaf ${\mathcal{L}}(V)^n$.
We will need the following statement:

\begin{lemma}
\label{UVnoutAut}
The subsheaf $\varphi\left(\mathcal{L}_{\mathcal{C}\setminus\mathcal{P}_\bullet}(V)\right)$ of ${\mathcal{L}}(V)^n$ is preserved by the action of $(\textup{\textrm{Aut}}_+\mathcal{O})^n$.
\end{lemma}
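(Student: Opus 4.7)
The plan is to prove the stronger statement that the restriction map
\[
\varphi\colon \mathcal{L}_{\mathcal{C}\setminus\mathcal{P}_\bullet}(V)\longrightarrow {\mathcal{L}}(V)^n
\]
is itself $(\textup{Aut}_+\mathcal{O})^n$-equivariant; the stability of the image is then immediate.

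First I would unpack the equivariant structure on the source. The sheaf $\mathcal{L}_{\mathcal{C}\setminus\mathcal{P}_\bullet}(V)$ was defined in \eqref{LSheaf} on $\overline{\mathcal{M}}_{g,n}$ using only the universal curve and its marked sections, with no reference to formal coordinates. Since $\widetriangle{\mathcal{M}}_{g,n}\to\overline{\mathcal{M}}_{g,n}$ is a principal $(\textup{Aut}\,\mathcal{O})^n$-bundle, its pullback to $\widetriangle{\mathcal{M}}_{g,n}$ carries a canonical equivariant structure in which the fiberwise action of $(\textup{Aut}\,\mathcal{O})^n$ changes the coordinates $t_\bullet$ on the base while leaving intact the coordinate-free object $\mu$ lifted from $\overline{\mathcal{M}}_{g,n}$. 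Restricting to $(\textup{Aut}_+\mathcal{O})^n\subset(\textup{Aut}\,\mathcal{O})^n$ gives the action appearing in the statement.

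Next I would make $\varphi$ explicit. For a local section $\mu$ of $\mathcal{L}_{\mathcal{C}\setminus\mathcal{P}_\bullet}(V)$ over $U\subset\widetriangle{\mathcal{M}}_{g,n}$, the image $\varphi(\mu)=(\mu_{P_1},\ldots,\mu_{P_n})$ is obtained by Laurent-expanding $\mu$ in the formal discs around $\mathcal{P}_i(U)$ using the tautological coordinates $t_i$ and passing to the quotient by $\textrm{Im}\,\nabla$, which identifies the expansion with an element of $\mathfrak{L}(V)$ as in \eqref{LV}. The main step is then to verify the identity
\[
\varphi(\rho\cdot\mu)=\textup{Ad}_\rho\bigl(\varphi(\mu)\bigr), \qquad \rho=(\rho_1,\ldots,\rho_n)\in(\textup{Aut}_+\mathcal{O})^n.
\]
The left-hand side is the expansion of the same coordinate-free section $\mu$ but in the coordinates $\rho_i(t_i)$; the right-hand side is the adjoint action on the original expansion. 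Huang's coordinate-change formula recalled in \S\ref{HCcompM}(ii), namely
\[
\rho^{-1}Y^M(A,z)\rho = Y^M(\rho_z^{-1}\cdot A,\rho(z)),
\]
is designed precisely so that these two procedures coincide: it translates a change of formal coordinate into the adjoint action on $\mathfrak{L}(V)$. It suffices to check this on a topological spanning set of sections of type $A\otimes f(t_i)\,dt_i$ with $A\in V$ and $f\in\mathcal{O}_U(\!(t_i)\!)$, where the identity reduces, coordinate-wise, to $\textup{Ad}_{\rho_i}\bigl(\sum_k A_{[k]}z^{-k-1}\bigr)=\sum_k(\rho_{i,z}^{-1}\cdot A)_{[k]}\rho_i(z)^{-k-1}$ as spelled out in \S\ref{HCcompM}.

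Finally, the equivariance of $\varphi$ implies that $\varphi\bigl(\mathcal{L}_{\mathcal{C}\setminus\mathcal{P}_\bullet}(V)\bigr)$ is stable under the $(\textup{Aut}_+\mathcal{O})^n$-action on ${\mathcal{L}}(V)^n$, which is the claim. The main obstacle is purely bookkeeping: making sure that Huang's formula, originally stated pointwise over $\textup{Spec}\,\mathbb{C}$, is applied consistently in the relative setting over $U$, across the direct sum over the $n$ marked points, and modulo the subsheaves $\textrm{Im}\,\nabla$ used to build both source and target. Once that is in place the verification is essentially formal.
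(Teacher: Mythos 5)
Your proposal is correct and follows essentially the same route as the paper: the paper also reduces the lemma to the identity $\mathscr{L}'_{C^\rho\setminus P_\bullet^\rho}(V)=\rho^{-1}\mathscr{L}'_{C\setminus P_\bullet}(V)\rho$ (a special case of \eqref{urhourho}), which is exactly the equivariance of the restriction map $\varphi$ and is established by Huang's coordinate-change formula $\rho^{-1}Y^M(A,z)\rho=Y^M(\rho_z^{-1}\cdot A,\rho(z))$ from \S\ref{HCcompM}. Your write-up just makes explicit the bookkeeping that the paper leaves implicit.
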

 
More precisely, let $\mathscr{L}'_{C\setminus P_\bullet}(V)$ be the fiber of $\varphi\left( \mathcal{L}_{\mathcal{C}\setminus\mathcal{P}_\bullet}(V)\right)$ at $(C, P_\bullet, t_\bullet)$.
One has 
\[
\mathscr{L}'_{C\setminus P_\bullet}(V) = \rho^{-1} \mathscr{L}'_{C\setminus P_\bullet}(V) \rho
\]
for all $\rho\in (\textrm{Aut}_+\mathcal{O})^n$. This is a special case of \eqref{urhourho}.

Let $\left(\mathcal{C}\rightarrow S, P_\bullet, \tau_\bullet \right)$ be a family of stable pointed curves over a smooth base $S$ with $n$ sections $P_i\colon S\rightarrow \mathcal{C}$ and non-zero $1$-jets $\tau_i$. 
Assume that $\mathcal{C}\setminus P_\bullet(S)$ is affine over $S$ (e.g., $S={\mathcal{J}}_{g,n}^{1,\times}$, the locus of smooth curves in $\overline{\mathcal{J}}_{g,n}^{1,\times}$).
Define the principal $(\textrm{Aut}_+\mathcal{O})^n$-bundle $\widetriangle{S}\rightarrow S$ as the pull-back of $\widetriangle{\pi}\colon \widetriangle{\mathcal{M}}_{g,n} \rightarrow \overline{\mathcal{J}}_{g,n}^{1,\times}$ via the moduli map $S\rightarrow \overline{\mathcal{J}}_{g,n}^{1,\times}$.

Let $V$ be a vertex operator algebra and let $M^\bullet=(M^1,\dots, M^n)$ be  $V$-modules. 
The action of $\textrm{Der}_+\, \mathcal{O}=\textrm{Lie}(\textrm{Aut}_+\mathcal{O})$ on each $M^i$ can be exponentiated to an action of $\textrm{Aut}_+\mathcal{O}$ on $M^i$. 

The action of $\mathfrak{L}(V)$ on each module $M^i$ via $\alpha_{M_i}\colon \mathfrak{L}(V)\rightarrow M^i$, compatible with the action of $\textrm{Aut}_+\mathcal{O}$ (see \S \ref{HCcompM}), 
induces an anti-homomorphism of Lie algebras
\begin{equation}
\label{alphaoMi}
\begin{split}
\begin{array}{r@{\;}ccl}
\alpha_{\otimes_{i=1}^n M^i}\colon & \mathfrak{L}(V)^{\oplus n} &\rightarrow & \textrm{End}(\otimes_{i=1}^n M^i)\\
& (u_1, \dots, u_n) & \mapsto & \sum_{i=1}^n \textrm{Id}_{M^1}\otimes \cdots \otimes \alpha_{M^i}(u_i)\otimes \cdots \otimes\textrm{Id}_{M^n}\\
\end{array}
\end{split}
\end{equation}
compatible with the action of $(\textrm{Aut}_+\mathcal{O})^n$.
This extends $\mathcal{O}_{\widetriangle{S}}\,$-linearly to an $(\textrm{Aut}_+\mathcal{O})^n$-equivar\-iant action of $\mathcal{L}(V)^n=\mathfrak{L}(V)^n \,\widehat{\otimes}\,\mathcal{O}_{\widetriangle{S}}$ on $\otimes_{i=1}^n M^i\otimes\,\mathcal{O}_{\widetriangle{S}}$.
That is, $\mathcal{L}(V)^n$ and $(\textrm{Aut}_+\mathcal{O})^n$ act compatibly on $\otimes_{i=1}^n M^i \otimes  \mathcal{O}_{\widetriangle{S}}$.
The sheaf $\mathcal{L} (V)^n$ is $(\textrm{Aut}_+\mathcal{O})^n$-equivariant.
By Lemma \ref{UVnoutAut}, the image of $\mathcal{L}_{\mathcal{C}\setminus\mathcal{P}_\bullet}(V)$ in $\mathcal{L} (V)^n$ is preserved by the action of $(\textrm{Aut}_+\mathcal{O})^n$. 
It follows that the quasi-coherent sheaf $\widetriangle{\mathbb{V}}_g(V; M^\bullet)|_{\widetriangle{S}}$ is an $(\textrm{Aut}_+\mathcal{O})^n$-equivariant $\mathcal{O}_{\widetriangle{S}}\,$-module, hence descends along the principal $(\textrm{Aut}_+\mathcal{O})^n$-bundle $\widetriangle{\pi}\colon\widetriangle{S}\rightarrow S$. 
Namely:

\begin{definition}\label{SheafOfCoinvariants}
The \textit{sheaf of coinvariants} $\mathbb{V}^J(V; M^\bullet)_{(\mathcal{C}\rightarrow S, P_\bullet, \tau_\bullet)}$ on $S$ is the quasi-coherent sheaf  
\[
\mathbb{V}^J(V; M^\bullet)_{(\mathcal{C}\rightarrow S, P_\bullet, \tau_\bullet)} := \left(\widetriangle{\pi}_* \, \widetriangle{\mathbb{V}}_g(V; M^\bullet)|_{\widetriangle{S}} \right)^{(\textrm{Aut}_+\mathcal{O})^n}
\]
whose sections are the $(\textrm{Aut}_+\mathcal{O})^n$-invariant sections of $\widetriangle{\mathbb{V}}_g(V; M^\bullet)|_{\widetriangle{S}}$. 
Similarly to \S \ref{SheafOfCoinvariantsTriangle}, one can remove the hypothesis that $\mathcal{C}\setminus P_\bullet(S)$ is affine over $S$, and the construction can be extended to obtain a sheaf on $\overline{\mathcal{J}}_{g,n}^{1,\times}$, denoted $\mathbb{V}^J_g(V; M^\bullet)$.
\end{definition}

The fiber of $\mathbb{V}^J_g(V;M^\bullet)$ over a point $(C,P_\bullet, \tau_\bullet)$ in $\overline{\mathcal{J}}_{g,n}^{1,\times}$ is canonically isomorphic to the space of coinvariants  \eqref{spacecon}. The sheaf $\mathbb{V}^J(V;M^\bullet)$ comes with a twisted logarithmic $\mathcal{D}$-module structure, detailed in \S\ref{projconnCJ}.
This construction is the result of the {localization functor} transforming modules for the Harish-Chandra pair $\left({\mathfrak{L}(V)^n}, (\textrm{Aut}_+\mathcal{O})^n \right)$ into twisted logarithmic $\mathcal{D}$-modules.

\subsubsection{The second descent to define ${\mathbb{V}}_g(V; M^\bullet)$ on $\overline{\mathcal{M}}_{g,n}$}
\label{SecondDescent}
The action of $(\mathbb{C}^\times)^n$ on $\otimes_i M^i$ obtained from its $\mathbb{Z}^n$-gradation as in \eqref{CactM} induces an action of $(\mathbb{C}^\times)^n$ on $\mathbb{V}^J_g(V; M^\bullet)$. Indeed, since $\mathbb{C}^\times$ is realized as the quotient $\text{Aut}\,\mathcal{O}/\text{Aut}_+\mathcal{O}$, it is enough to check that the action of the Lie algebra $\mathscr{L}_{C \setminus P}(V)$ on $\otimes_i M^i$ is  independent of the choice of coordinates at the fixed points. This follows from $\eqref{urhourho}$.

Let $\left(\mathcal{C}\rightarrow S, P_\bullet\right)$ be a family of stable pointed curves over a smooth base $S$ with $n$ sections $P_i\colon S\rightarrow \mathcal{C}$, and assume that $\mathcal{C}\setminus P_\bullet(S)$ is affine over $S$ (e.g., $S=\mathcal{M}_{g,n}$).
Define the principal $(\mathbb{C}^\times)^n$-bundle $S^J\rightarrow S$ as the pull-back of $j\colon \overline{\mathcal{J}}^\times_{g,n}\rightarrow\overline{\mathcal{M}}_{g,n}$ via the moduli map $S\rightarrow \overline{\mathcal{M}}_{g,n}$.
Assume that for each $i$, there exists $a_i$ such that $L_0(v)=\deg(v)+a_i$ for homogeneous $v\in M^i$.

\begin{definition}
\label{SOCM}
The \textit{sheaf of coinvariants} $\mathbb{V}(V; M^\bullet)_{(\mathcal{C}\rightarrow S, P_\bullet)}$ on $S$ is the quasi-coherent sheaf  
\[
\mathbb{V}(V; M^\bullet)_{(\mathcal{C}\rightarrow S,P_\bullet)} := \left({j}_* \, \mathbb{V}^J_g(V; M^\bullet)|_{S^J} \otimes \otimes_{i=1}^n\,\omega_{\mathcal{C}/S}^{\otimes -a_i} \right)^{(\mathbb{G}_m)^n} \otimes \otimes_{i=1}^n\,\omega_{\mathcal{C}/S}^{\otimes a_i}.
\]
This can be extended as in \S \ref{SheafOfCoinvariantsTriangle} to obtain a sheaf on $\overline{\mathcal{M}}_{g,n}$, denoted $\mathbb{V}_g(V; M^\bullet)$.
\\ The \textit{sheaf of conformal blocks} on $\overline{\mathcal{M}}_{g,n}$ is the dual of the sheaf of coinvariants $\mathbb{V}_g(V; M^\bullet)$.
\end{definition}

The fiber of $\mathbb{V}_g(V; M^\bullet)$ over a $\mathbb{C}$-point $(C,P_\bullet)$ in $\overline{\mathcal{M}}_{g,n}$ is canonically isomorphic to the space of coinvariants \eqref{spaceconCstar}.

\begin{remark}By Theorem \ref{thm:POV}, one has that {\em{propogation of vacua}} holds for
$\mathbb{V}_g(V; M^\bullet)$.
\end{remark}

\subsubsection{Prior work on extensions to stable curves}\label{priorExtensions}
Sheaves of coinvariants were originally constructed from integrable modules at a fixed level $\ell$ over affine Lie algebras $\hat{\mathfrak{g}}$ associated to semi-simple Lie algebras $\mathfrak{g}$ on $\widehat{\mathcal{M}}_{0,n}=\widetriangle{\mathcal{M}}_{0,n}\setminus \Delta$ in  \cite{TK}.   These were generalized in \cite{tuy} to $\widetriangle{\mathcal{M}}_{g,n}$, and shown to be locally free of finite rank, with a number of other good  properties including a projectively flat logarithmic connection. In \cite{ts} they were shown to be  coordinate free, defined on $\overline{\mathcal{M}}_{g,n}$, and as is done here, an action of an Atiyah algebra on the bundles was explicitly computed.  

Prior to our work, and aside from aspects of \cite{bd} and \cite{bzf},
considered here for stable curves with singularities,  there has been some interest in sheaves 
of conformal blocks using vertex algebras.  In \cite{u} sheaves defined by representations at level $\ell$ over the Heisenberg vertex algebra were shown to be isomorphic to the space of theta functions of order $\ell$ on the curve, identified under pullback along the Abel-Jacobi map, and shown to be of finite rank. Sheaves defined using  {\em{regular}} vertex  algebras defined on $\widetriangle{\mathcal{M}}_{0,n}$ (regular vertex  algebras are conformal and satisfy additional properties including, but not limited to, $C_2$-cofiniteness, which guarantees that they have finitely many simple modules),  were shown to be of finite rank, and carry a projectively flat logarithmic connection, as well as satisfy other good properties in parallel to the classical case \cite{NT}. The authors \cite{NT} remark that their definition of conformal blocks agrees with that described in  \cite[\S\S 0.9, 0.10, 4.7.2]{bd}, although their construction appears very different than what we consider here.   Conformal blocks defined over a single smooth pointed curve with coordinates using modules over $C_2$-cofinite vertex algebras were shown to be finite-dimensional in \cite{NT}.  
Recently, sheaves of coinvariants defined by holomorphic vertex algebras (admitting only one $V$-module: $V$ itself) on  $\overline{\mathcal{M}}_{g,1}$ have been applied to the study of the Schottky problem and the slope of the effective cone of moduli of curves in \cite{gc}.  

Connections between conformal blocks defined by lattice vertex algebras and theta functions are discussed in \cite[\S 5]{bzf}.


\section{The twisted logarithmic $\mathcal{D}$-module structure on ${\mathbb{V}^J}(V; M^\bullet)$}
\label{projconnCJ}
In this section we specify the twisted logarithmic $\mathcal{D}$-module structure of the  quasi-coherent sheaf of coinvariants $\mathbb{V}^J_g(V;M^\bullet)$  (Definition~\ref{SheafOfCoinvariants}).  In particular, $\mathbb{V}^J_g(V;M^\bullet)$ supports a projectively flat logarithmic connection.   
To do this, we determine the (logarithmic) Atiyah algebra acting on  sheaves of coinvariants.  

Let $\Lambda :=\det \textbf{R}\pi_* \,\omega_{\overline{\mathcal{C}}_{g,n}/\overline{\mathcal{M}}_{g,n}}$ be the determinant of cohomology of the Hodge bundle  (see \S \ref{Hodge}),  $\Delta$  the divisor in $\overline{\mathcal{M}}_{g,n}$ of singular curves, and $\mathcal{A}_\Lambda$ the corresponding logarithmic Atiyah algebra. One has:

\begin{theorem}
\label{AtalgonCJ}
The logarithmic Atiyah algebra $\frac{c}{2}\mathcal{A}_\Lambda$ acts on $\mathbb{V}^J_g(V; M^\bullet)$.
\end{theorem}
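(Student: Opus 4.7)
The plan is to lift the Virasoro uniformization (Theorem \ref{Virunif}) through the central extension $\textrm{Vir}\twoheadrightarrow\textrm{Der}\,\mathcal{K}$ to obtain a central $\mathcal{O}_S$-extension of $\mathcal{T}_S(-\log\Delta)$ that acts on $\mathbb{V}^J_g(V;M^\bullet)$, and then to identify that extension with $\tfrac{c}{2}\mathcal{A}_\Lambda$ by comparing the resulting Virasoro cocycle with the one computed from the Virasoro action on the Hodge bundle $\Lambda$ given by Theorem \ref{nVirAt}.

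First I would work locally on a chart $S$ of $\widetriangle{\mathcal{M}}_{g,n}$ with formal coordinates at the marked points. Given $\theta\in\mathcal{T}_S(-\log\Delta)$, I would choose any $\mathbb{C}$-linear lift $\widetilde\theta\in(\textrm{Der}\,\mathcal{K})^{\oplus n}\widehat{\otimes}\mathcal{O}_S$ through the anchor $a$ of \eqref{anchor}, and then lift $\widetilde\theta$ further to $\widehat\theta\in\textrm{Vir}^{\oplus n}\widehat{\otimes}\mathcal{O}_S$ through the Virasoro central extension. Restricting the homomorphism \eqref{alphaoMi} to the Virasoro subalgebra $\textrm{Vir}\hookrightarrow\mathfrak{L}(V)$ of \S\ref{UV}, and combining the resulting endomorphism with the derivation of $\mathcal{O}_S$ induced by $\widetilde\theta$, produces an operator $\widetilde\alpha(\theta)$ on the ambient sheaf $\otimes_{i=1}^n M^i\otimes\mathcal{O}_S$. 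The compatibility of the $\mathfrak{L}(V)$-action with the right $\textrm{Aut}_+\mathcal{O}$-action from \S\ref{HCcompM} together with the property that Virasoro modes act as derivations on vertex operators implies that $\widetilde\alpha(\theta)$ normalizes the image of $\mathcal{L}_{\mathcal{C}\setminus\mathcal{P}_\bullet}(V)$, so it descends to an endomorphism of $\mathbb{V}^J_g(V;M^\bullet)$ after taking $(\textrm{Aut}_+\mathcal{O})^n$-invariants. Different choices of $\widetilde\theta$ differ by an element of $\ker a=\mathscr{T}_C(C\setminus P_\bullet)$, which by Lemma \ref{UCPVircinUCPVLemma} acts trivially on coinvariants; different Virasoro lifts differ by an element of $K\cdot\mathcal{O}_S$, which acts as multiplication by $c\cdot\mathcal{O}_S$ since $K$ acts on each $M^i$ as multiplication by the central charge $c$. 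Hence the construction yields a well-defined action on $\mathbb{V}^J_g(V;M^\bullet)$ of some central $\mathcal{O}_S$-extension of $\mathcal{T}_S(-\log\Delta)$, i.e., the structure of a module over some logarithmic Atiyah algebra.

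The main obstacle will be the final identification of this Atiyah algebra with $\tfrac{c}{2}\mathcal{A}_\Lambda$. The strategy is to invoke Theorem \ref{nVirAt}, which produces an action of the Virasoro algebra on the Hodge line bundle $\Lambda$ on $\widetriangle{\mathcal{M}}_{g,n}$; equivalently, it identifies the Virasoro central extension of $\mathcal{T}_{\overline{\mathcal{M}}_{g,n}}(-\log\Delta)$ by $\mathcal{O}$ arising from the uniformization with $\tfrac{1}{2}\mathcal{A}_\Lambda$, the factor $\tfrac{1}{2}$ reflecting the classical Mumford-type normalization between the Virasoro $2$-cocycle $\tfrac{1}{12}(p^3-p)\delta_{p+q,0}$ and $c_1(\Lambda)$. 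Since the central element $K\in\textrm{Vir}$ acts on $\otimes_{i=1}^n M^i$ as multiplication by the central charge $c$, the induced extension acting on $\mathbb{V}^J_g(V;M^\bullet)$ is the Baer pushout of $\tfrac{1}{2}\mathcal{A}_\Lambda$ along $c\cdot(-)\colon\mathcal{O}_S\to\mathcal{O}_S$, which is by definition $\tfrac{c}{2}\mathcal{A}_\Lambda$, completing the identification.
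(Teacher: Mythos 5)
Your proposal is correct and follows essentially the same route as the paper: the paper packages your pointwise lifting argument as an action of the Lie algebroid $n\textrm{Vir}(\mathbb{C})\,\widehat\otimes_\mathbb{C}\,\mathcal{O}_S$ on $\otimes_{i=1}^n M^i\otimes_\mathbb{C}\mathcal{O}_S$ via formula \eqref{nVirOactiononMO}, shows that this action preserves $\mathcal{L}_{\mathcal{C}\setminus\mathcal{P}_\bullet}(V)\cdot\left(\otimes_{i=1}^n M^i\otimes_\mathbb{C}\mathcal{O}_S\right)$ (Lemmas \ref{UVnoutnVir} and \ref{UVnoutMOnVir}), disposes of the ambiguity in $\textrm{Ker}\,a$ via Lemma \ref{UCPVircinUCPVLemma}, and identifies the resulting central extension with $\frac{c}{2}\mathcal{A}_\Lambda$ via Theorem \ref{nVirAt}, exactly as you propose. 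The one step you treat more lightly than the paper is the normalization of $\mathcal{L}_{\mathcal{C}\setminus\mathcal{P}_\bullet}(V)$: the paper's Lemma \ref{UVnoutnVir} rests on Huang's change-of-coordinate formula \eqref{urhourho} for the full group $\widehat{(\textrm{Aut}\,\mathcal{K})^n}$, which genuinely moves the curve, rather than only the $\textrm{Aut}_+\mathcal{O}$-compatibility of \S\ref{HCcompM}; your appeal to Virasoro modes acting as derivations on vertex operators is the infinitesimal form of that same fact.
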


Here $c$ is the central charge of the vertex algebra $V$.
For $c=0$, this gives a logarithmic $\mathcal{D}$-module structure on $\mathbb{V}^J_g(V; M^\bullet)$ (\S \ref{zero}). 
We review the definition of a logarithmic Atiyah algebra in~\S \ref{LogAtiyahAlgebra}.
For coinvariants of affine Lie algebras, the above statement is in \cite{tuy}; see also \cite[\S 7.10]{bk}.

\subsection{Logarithmic Atiyah algebras}\label{LogAtiyahAlgebra}
Let $S$ be a smooth scheme over $\mathbb{C}$, and $\Delta\subset S$ a normal crossing divisor. Following \cite{besh} and \cite{ts}, an \textit{Atiyah algebra with logarithmic singularities along $\Delta$} (abbreviated \textit{logarithmic Atiyah algebra}, when $\Delta$ is clear from the context) is a Lie algebroid $\mathcal{A}$ (see \S \ref{LieAlgebroids}) together with  its {\em{fundamental extension sequence}}: 
\[
0 \rightarrow \mathcal{O}_S \xrightarrow{\iota_{\mathcal{A}}} \mathcal{A} \xrightarrow{p_{\mathcal{A}}} \mathcal{T}_{S}(-\log \Delta) \rightarrow 0.
\] 
One defines sums via the Baer sum 
\[
\mathcal{A}+\mathcal{B}:=\mathcal{A}\times_{\mathcal{T}_{S}(-\log\Delta)}\mathcal{B}/\{(\iota_{\mathcal{A}}(f), -\iota_{\mathcal{B}}(f)), \mbox{ for $f\in\mathcal{O}_S$}\}, 
\]
with fundamental sequence given by $\iota_{\mathcal{A}+\mathcal{B}}=(\iota_{\mathcal{A}},0)=(0,\iota_{\mathcal{B}})$, and $p_{\mathcal{A}+\mathcal{B}}=p_{\mathcal{A}}=p_{\mathcal{B}}$, and scalar multiplications $\alpha \mathcal{A}=(\mathcal{O}_S\oplus \mathcal{A})/(\alpha,1)\mathcal{O}_S$, with $i_{\alpha\mathcal{A}}=(\textrm{id},0)$ and $p_{\alpha\mathcal{A}}=(0,p_{\mathcal{A}})$, for $\alpha\in \mathbb{C}$. 
When $m\in\mathbb{Z}_{> 0}$, one has  $m\mathcal{A}=\mathcal{A} + \cdots + \mathcal{A}$ (with m summands).
When $\Delta=\emptyset$, the above recovers the classical case of Atiyah algebras. Restrictions of Atiyah algebras over the subsheaf  $\mathcal{T}_{S}(-\log\Delta) \hookrightarrow \mathcal{T}_{S}$ are logarithmic Atiyah algebras.

We will focus on logarithmic Atiyah algebras arising from line bundles: for a line bundle $\mathcal{L}$ over a smooth scheme $S$ equipped with a normal crossing divisor $\Delta$, we denote by $\mathcal{A}_\mathcal{L}$ the logarithmic Atiyah algebra of first order differential operators acting on $\mathcal{L}$ which preserve $\Delta$. 
Choosing a local trivialization $\mathcal{L} \cong \mathcal{O}_S$ on an open subset $U$ of $S$,  the elements of $\mathcal{A}_\mathcal{L}(U)$ are  $D + f$, where $D \in \mathcal{T}_S(-\log \Delta)(U)$ and $f \in \mathcal{O}_S(U)$. For $\alpha\in\mathbb{C}$, while the line bundle $\mathcal{L}^\alpha$ may not be well-defined, 
the logarithmic Atiyah algebra $\mathcal{A}_{\mathcal{L}^\alpha}$ is defined as $\mathcal{A}_{\mathcal{L}^\alpha}:=\alpha \mathcal{A}_\mathcal{L}$.

The action of a logarithmic Atiyah algebra $\mathcal{A}$ on a quasi-coherent sheaf $\mathbb{V}$ over $S$ is an action of sections $\mu\in \mathcal{A}$ as first-order differential operators $\Phi(\mu)$ on $\mathbb{V}$ such that: (i) the principal symbol of $\Phi(\mu)$ is $p_\mathcal{A}(\mu)\otimes \textrm{id}_\mathbb{V}$;
(ii) $\Phi(\iota_\mathcal{A}(1))=1$. In particular, for $\alpha\in\mathbb{C}$ and a line bundle $\mathcal{L}$ on $S$, an action of $\mathcal{A}_{\mathcal{L}^\alpha}$ is an action of $\mathcal{A}_\mathcal{L}$ with the difference that $\iota_{\mathcal{A}_\mathcal{L}}(1)$ acts as multiplication by~$\alpha$.

A flat connection with logarithmic singularities on a quasi-coherent sheaf $\mathbb{V}$ over $S$ is defined as a map of sheaves of Lie algebras  $\mathcal{T}_S(- \log \Delta) \to \text{End}(\mathbb{V})$. Following \cite{beka} we call projectively flat connection with logarithmic singularities a map $\mathcal{T}_S(- \log \Delta) \to \text{End}(\mathbb{V})/ ( \mathcal{O}_S \,\text{Id}_\mathbb{V})$. This means that the action of $\mathcal{T}_S(- \log \Delta)$ on $\mathbb{V}$ is well defined only up to constant multiplication. From the fundamental sequence defining an Atiyah algebra $\mathcal{A}$, it follows that the action of $\mathcal{A}$ on a quasi-coherent sheaf $\mathbb{V}$ induces a projectively flat connection with logarithmic singularities on $\mathbb{V}$.

\subsection{The Hodge line bundle}\label{Hodge}
For $n>0$, let $n\textrm{Vir}$ be the quotient of the direct sum $\textrm{Vir}^{\oplus n}$ of $n$ copies of the Virasoro algebra modulo the identification of
the $n$ central elements in $\textrm{Vir}^{\oplus n}$ of type $(0, \dots,0, K,0, \dots, 0)$.  We denote by $\overline{K}$ the equivalence class of these elements. 
Then $n\textrm{Vir}$ is a central extension
\[
0\rightarrow \mathfrak{gl}_1\cdot \overline{K} \rightarrow n\textrm{Vir} \rightarrow (\textrm{Der}\, \mathcal{K})^{\oplus n} \rightarrow 0
\]
with Lie bracket  given by
\[
\left[(L_{p_i})_{1\leq i\leq n}, (L_{q_i})_{1\leq i\leq n}\right] = \left((p_i-q_i) L_{p_i+q_i} \right)_{1\leq i\leq n} + \frac{\overline{K}}{12} \sum_{i=1}^n (p_i^3-p_i)\delta_{p_i+q_i,0}.
\]

Let $\left( \mathcal{C}\rightarrow S, P_\bullet, t_\bullet\right)$ be a family of stable pointed curves of genus $g$ over a smooth base $S$ with $n$ sections $P_i\colon S\rightarrow \mathcal{C}$ and formal coordinates $t_i$. These data give a moduli map $S\rightarrow \widetriangle{\mathcal{M}}_{g,n}$. Assume that $\mathcal{C}\setminus P_\bullet(S)$ is affine ever $S$.
There is a Lie algebra embedding $\textrm{Ker}\, a\hookrightarrow n\textrm{Vir} \,\widehat{\otimes}\, \mathcal{O}_{S}$ \cite[\S 19.6.5]{bzf}, where $a$ is the anchor map from \eqref{anchor}. 
The quotient 
\[
n\textrm{Vir}(\mathbb{C}) \,\widehat\otimes_\mathbb{C}\, \mathcal{O}_{S}\Big/\textrm{Ker}\, a
\]
is anti-isomorphic to an extension
\[
0 \rightarrow \mathcal{O}_{S} \rightarrow \mathcal{A} \rightarrow \mathcal{T}_{S}(-\log \Delta) \rightarrow 0
\]
and carries the structure of a logarithmic Lie algebroid (see \S \ref{LieAlgebroids}). 
The following statement is known as the \textit{Virasoro uniformization} for  
the line bundle $\Lambda$ (see Fig.~\ref{fig:Virunifla}).

\begin{theorem}[\cite{adkp}, \cite{besh}, \cite{kvir}, \cite{tuy}]
\label{nVirAt}
The logarithmic Atiyah algebra $\mathcal{A}$ is isomorphic to $\frac{1}{2}\mathcal{A}_\Lambda$.
\end{theorem}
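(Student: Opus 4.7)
The plan is to extend the smooth-curve argument of \cite{adkp}, \cite[\S 17]{bzf} to all of $\widetriangle{\mathcal{M}}_{g,n}$, carefully tracking the logarithmic behavior along $\Delta$. Both $\mathcal{A}$ and $\tfrac{1}{2}\mathcal{A}_\Lambda$ fit into fundamental sequences with the same quotient $\mathcal{T}_S(-\log \Delta)$ and the same $\mathcal{O}_S$ at the origin, so it suffices to produce a natural map of extensions between them. I would do this by constructing an action of $n\textup{Vir} \,\widehat{\otimes}\, \mathcal{O}_S$ on $\Lambda$ with $\overline{K}$ acting by the scalar $\tfrac{1}{2}$, which descends to the desired isomorphism modulo $\ker a$.

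The core construction is a fermionic realization of $\Lambda$. Locally on $S$, a choice of coordinates $t_\bullet$ identifies sections of $\omega_{\mathcal{C}/S}$ on the formal punctured discs at $P_i(S)$ with $\mathcal{O}_S(\!(t_i)\!)\, dt_i$. The Hodge line bundle $\Lambda$ is then canonically the determinant of the pair consisting of the total Fock space $\mathbb{W} := \bigoplus_i \mathcal{O}_S(\!(t_i)\!)\, dt_i$ and its Lagrangian-type subspaces $\mathbb{W}_+ := \bigoplus_i \mathcal{O}_S\llbracket t_i\rrbracket\, dt_i$ and $\mathbb{W}_- := H^0(\mathcal{C}\setminus P_\bullet(S), \omega_{\mathcal{C}/S})$. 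The Witt algebras $(\textup{Der}\,\mathcal{K})^{\oplus n}$ act on $\mathbb{W}$ by Lie derivative of one-forms, preserving $\mathbb{W}_+$ and $\mathbb{W}_-$ up to finite-dimensional defects, hence projectively on $\Lambda$; the cocycle obstruction is the standard fermionic Virasoro cocycle for a conformal-weight-one field, yielding a genuine action of $n\textup{Vir}\,\widehat{\otimes}\,\mathcal{O}_S$ on $\Lambda$. By Theorem \ref{Virunif}, $\ker a$ consists of vector fields regular on $\mathcal{C} \setminus P_\bullet(S)$; such vector fields preserve $\mathbb{W}_-$ setwise and give a trace-zero contribution to the Lie-derivative action on the determinant, so they act by zero on $\Lambda$. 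Hence the action descends to a map of extensions $\mathcal{A} \to \mathcal{A}_\Lambda$, and a direct cocycle computation on the disc identifies it with the inclusion $\mathcal{A} \hookrightarrow \tfrac{1}{2}\mathcal{A}_\Lambda$; matching at both ends of the fundamental sequence forces it to be an isomorphism.

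The main obstacle is ensuring that everything extends with the correct logarithmic behavior across $\Delta$. Near a nodal curve, the universal deformation is locally $xy = q$ with $q$ a parameter transverse to $\Delta$, and by the tangent-space description in \S\ref{alphaconstr} the log vector field $q\partial_q$ is the anchor image of $t_\pm \partial_{t_\pm}$ at either branch of the node. One must verify that the fermionic construction of $\Lambda$ degenerates cleanly at the nodal limit after passing to the partial normalization: sections of $\omega_{\mathcal{C}/S}$ near the node are governed by the residue pairing at the two branches (compatibly with the gluing \eqref{eq:gluingAut}), and the Virasoro cocycle on $\Lambda$ acquires precisely the log pole along $\Delta$ predicted by the Mumford relation for $\lambda$. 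This verification reduces to an essentially local computation on the partial normalization, where the smooth-curve identification of \cite[\S 17]{bzf} applies to each branch, and the contributions of the two branches are matched by \eqref{eq:gluingAut}.
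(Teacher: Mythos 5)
Your proposal is correct and takes essentially the same route as the paper: the paper's proof likewise produces an action of $n\textrm{Vir}\,\widehat\otimes_{\mathbb{C}}\,\mathcal{O}_S$ on $\Lambda$ extending the Virasoro uniformization of Theorem \ref{Virunif}, checks that $\textrm{Ker}\,a$ acts trivially, and concludes via the Five Lemma applied to the two fundamental sequences, with your fermionic/determinant-line realization of $\Lambda$ being exactly the construction the paper outsources to \cite{adkp}, \cite{kvir}, and \cite[\S 7.10]{bk}. The one step you assert rather than argue --- that a vector field regular on $\mathcal{C}\setminus P_\bullet(S)$ acts on $\Lambda$ by zero and not merely by some scalar --- is the content of \cite[(7.10.11)]{bk} and rests on the residue theorem (in the spirit of Lemma \ref{vect}) rather than on a literal ``trace-zero'' computation, since such a vector field preserves $\mathbb{W}_-$ but not $\mathbb{W}_+$, so its action on the determinant line is a regularized anomaly that one must actually show vanishes.
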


\begin{proof}[Sketch of the proof]
The Lie algebra $n\textrm{Vir}$ acts on $\Lambda$ extending the action of $(\textrm{Der}\, \mathcal{K})^{\oplus n}$ on $\widetriangle{\mathcal{M}}_{g,n}$
from Theorem \ref{Virunif} so that the element $\overline{K}\in n\textrm{Vir}$ acts as multiplication by $2$ (the case $n=1$ is in \cite{kvir}).
This action extends to an anti-homomorphism of logarithmic Lie algebroids
\[
n\textrm{Vir}(\mathbb{C}) \,\widehat\otimes_\mathbb{C}\, \mathcal{O}_{S} \rightarrow \mathcal{A}_\Lambda
\]
such that $1\in \mathcal{O}_{S} \hookrightarrow \mathcal{A}$ is mapped to $\frac{1}{2}\in \mathcal{O}_{S} \hookrightarrow \mathcal{A}_\Lambda$.
The subsheaf $\textrm{Ker}\, a\hookrightarrow n\textrm{Vir}(\mathbb{C}) \,\widehat{\otimes}_\mathbb{C}\, \mathcal{O}_{S}$ --- whose fiber at $(C,P_\bullet,t_\bullet)$ consists of the Lie algebra $\mathscr{T}_C(C\setminus P_\bullet)$ 
of regular vector fields on $C\setminus P_\bullet$ --- acts trivially on $\Lambda$ (e.g., \cite[(7.10.11)]{bk} shows that $\textrm{Ker}\, a$ acts trivially on $\Lambda^{-1}$).
This gives a map of Atiyah algebras $\varphi\colon \mathcal{A}\rightarrow \frac{1}{2}\mathcal{A}_\Lambda$. Since  Atiyah algebras are in $\textrm{Ext}^1\left(\mathcal{T}_{S}(-\log \Delta), \mathcal{O}_{S}\right)$, the Five Lemma implies that $\varphi$ is an isomorphism.
\end{proof}

\begin{figure}[htb]
\centering
\begin{tikzcd}
\mathcal{O}_{S}\arrow[hook]{d} \arrow{r} & \mathcal{O}_{S} \arrow[hook]{d}\\
n\textrm{Vir}(\mathbb{C}) \,\widehat\otimes_\mathbb{C}\, \mathcal{O}_{S} \arrow[two heads]{r} \arrow[two heads]{d} & \mathcal{A}_\Lambda \arrow[two heads]{d}\\
\left(\textup{Der}\, \mathcal{K}(\mathbb{C})\right)^{\oplus n} \,\widehat\otimes_\mathbb{C}\, \mathcal{O}_{S} \arrow[two heads]{r}{a} & \mathcal{T}_{S} (-\log \Delta)
\end{tikzcd}
   \caption{The Virasoro uniformization for the line bundle $\Lambda$.}
   \label{fig:Virunifla}
\end{figure}

\subsection{The action on the sheaf of coinvariants}\label{ActionCov}

Before the proof of Theorem \ref{AtalgonCJ}, we discuss some auxiliary results.

The Lie algebra $\mathfrak{L}(V)^n$ contains $\textrm{Vir}^n$ as a Lie subalgebra. The adjoint representation of $\textrm{Vir}^n$ on $\mathfrak{L}(V)^n$ given by $v\mapsto [\cdot,v]$, for $v\in \textrm{Vir}^n$, factors through an action of $n\textrm{Vir}$ on $\mathfrak{L}(V)^n$. 
The Lie algebroid $n\textrm{Vir}(\mathbb{C})  \,\widehat\otimes_\mathbb{C}\, \mathcal{O}_{\widetriangle{\mathcal{M}}_{g,n}}$ acts on $\mathcal{L}(V)^n$ as
\begin{align}
\label{nVirOactiononUVn}
(v\otimes f)\ast(u\otimes h):= [u,v]\otimes fh + u\otimes \left( a(v\otimes f)\cdot h \right),
\end{align}
for $v\in n\textrm{Vir}$, $u\in \mathfrak{L}(V)^n$, and local sections $f,h$ of $\mathcal{O}_{\widetriangle{\mathcal{M}}_{g,n}}$. (The action is given by an \textit{anti-homomorphism} of sheaves of Lie algebras from $n\textrm{Vir}  \,\widehat\otimes\, \mathcal{O}_{\widetriangle{\mathcal{M}}_{g,n}}$ to the sheaf of endomorphisms of $\mathcal{L}(V)^n$. Indeed, the map $v\mapsto [\cdot,v]$, for $v\in n\textrm{Vir}$,  and the anchor map $a$ are both anti-homomorphisms. 
Here, the anchor map $a$ applies to the image of $v\otimes f$ in $(\textrm{Der}\, \mathcal{K}(\mathbb{C}))^n \,\widehat\otimes_\mathbb{C}\, \mathcal{O}_{\widetriangle{\mathcal{M}}_{g,n}}$.

\begin{lemma}
\label{UVnoutnVir}
The image of $\mathcal{L}_{\mathcal{C}\setminus\mathcal{P}_\bullet}(V)$ inside $\mathcal{L}(V)^n$ is preserved by the action of $n\textup{\textrm{Vir}}(\mathbb{C}) \,\widehat\otimes_\mathbb{C}\, \mathcal{O}_{\widetriangle{\mathcal{M}}_{g,n}}$ from~\eqref{nVirOactiononUVn}.
\end{lemma}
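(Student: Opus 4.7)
The plan is to interpret the $n\textrm{Vir}$-action geometrically via the Virasoro uniformization (Theorem \ref{Virunif}) and the regluing description of the $(\textrm{Aut}\,\mathcal{K})^n$-action on $\widetriangle{\mathcal{M}}_{g,n}$ from \S\ref{AlphaDes}, then argue that the integrated geometric action visibly preserves the image of $\mathcal{L}_{\mathcal{C}\setminus\mathcal{P}_\bullet}(V)$. This is morally the Lie-algebra analogue of Lemma \ref{UVnoutAut}, extended from $(\textrm{Aut}_+\mathcal{O})^n$ to the full Virasoro uniformization.

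First, I would dispose of the central element. The element $\overline{K} \in n\textrm{Vir}$ lies in the kernel of both $n\textrm{Vir} \twoheadrightarrow (\textrm{Der}\,\mathcal{K})^n$ (so its image under $a$ is zero) and the adjoint action on $\mathfrak{L}(V)^n$ (since each central $K \in \textrm{Vir}$ commutes with everything in $\mathfrak{L}(V)$). Hence the $\overline{K}$-component in \eqref{nVirOactiononUVn} contributes nothing, and it suffices to verify the claim for $v \otimes f$ with $v$ in a chosen lift of $(\textrm{Der}\,\mathcal{K})^n$ inside $n\textrm{Vir}$.

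Next, I would work locally on the base and integrate. Given $\mu \in \mathcal{L}_{\mathcal{C}\setminus\mathcal{P}_\bullet}(V)$, its image in $\mathcal{L}(V)^n$ is the tuple of Laurent expansions at $\mathcal{P}_\bullet$ with respect to $t_\bullet$. Pick $v = (\xi_1, \dots, \xi_n) \in (\textrm{Der}\,\mathcal{K})^n$ and consider the one-parameter family $\rho_\epsilon = \exp(\epsilon v) \in (\textrm{Aut}\,\mathcal{K})^n$. By the regluing description of \S\ref{AlphaDes}, $\rho_\epsilon$ acts on the family $(\mathcal{C}, \mathcal{P}_\bullet, t_\bullet)$ by twisting the gluing of the formal discs to $\mathcal{C}\setminus\mathcal{P}_\bullet$; in particular, the open complement $\mathcal{C}\setminus\mathcal{P}_\bullet$ is unchanged (up to the induced deformation of the base), and $\mu$ remains a bona fide section of the cokernel sheaf on the deformed complement. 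Its Laurent expansion at each marked point is modified by the coordinate change, namely by $\mathrm{Ad}_{\rho_\epsilon}$ coming from the compatibility reviewed in \S\ref{HCcompM} between $\mathfrak{L}(V)$ and coordinate changes, while $\mathcal{O}$-valued coefficients are transported along the corresponding flow on the base given by $a(v \otimes f)$. Differentiating at $\epsilon = 0$ reproduces exactly the formula $(v\otimes f) \ast (u\otimes h) = [u,v]\otimes fh + u\otimes (a(v\otimes f)\cdot h)$; since every deformed $\mu$ still lies in $\mathcal{L}_{\mathcal{C}\setminus\mathcal{P}_\bullet}(V)$ of the deformed family, the infinitesimal change lies in the image as claimed.

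The main obstacle I anticipate is matching the signs and the precise form of the adjoint action. The action of $\mathfrak{L}(V)^n$ on the Laurent expansions is compatible with $(\textrm{Aut}_+\mathcal{O})^n$-conjugation in the sense of the diagram in \S\ref{HCcompM}, but here we must extend this compatibility from $\textrm{Aut}_+\mathcal{O}$ to $\textrm{Aut}\,\mathcal{K}$, using the full Y.-Z.~Huang formula. Once the computation $\rho_\epsilon^{-1} Y(A,z) \rho_\epsilon = Y((\rho_\epsilon)_z^{-1}\cdot A, \rho_\epsilon(z))$ is differentiated at $\epsilon=0$, one recovers $[u,v]$, which together with the chain rule for the base deformation yields \eqref{nVirOactiononUVn}; the remainder is formal bookkeeping exploiting that \eqref{nVirOactiononUVn} and $\mu \mapsto (\mu|_{P_1}, \dots, \mu|_{P_n})$ are both $\mathcal{O}_{\widetriangle{\mathcal{M}}_{g,n}}$-linear in the appropriate variable, so the general case reduces to constant $v$.
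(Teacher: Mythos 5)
Your proposal is correct and follows essentially the same route as the paper: integrate the $n\mathrm{Vir}$-action to the group $(\textrm{Aut}\,\mathcal{K})^n$ acting by regluing on $\widetriangle{\mathcal{M}}_{g,n}$ and by conjugation on $\mathcal{L}(V)^n$, observe via Huang's change-of-coordinates formula that the image of $\mathcal{L}_{\mathcal{C}\setminus\mathcal{P}_\bullet}(V)$ transforms by conjugation (this is exactly \eqref{urhourho}), and differentiate to recover \eqref{nVirOactiononUVn}. The only cosmetic difference is that you discard the central element $\overline{K}$ at the outset (correctly, since it is killed by both the anchor and the adjoint action), whereas the paper absorbs it by working with the central extension $\widehat{(\textrm{Aut}\,\mathcal{K})^n}$ whose Lie algebra is $n\mathrm{Vir}$.
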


\begin{proof}
The case $n=1$ and zero central charge $c=0$ is \cite[Thm 17.3.11]{bzf}. 
The general case is similar; we sketch here a proof. 

Let $\widehat{(\textrm{Aut}\,\mathcal{K})^n}$ be the group ind-scheme satisfying $\textrm{Lie}\,\widehat{(\textrm{Aut}\,\mathcal{K})^n}=n\textrm{Vir}$.
The action of $(\textrm{Aut}\,\mathcal{K})^n$ on $\widetriangle{\mathcal{M}}_{g,n}$ induces an action of $\widehat{(\textrm{Aut}\,\mathcal{K})^n}$ on $\widetriangle{\mathcal{M}}_{g,n}$.
Moreover, by exponentiating the action of $n\textrm{Vir}$, one also has an action of $\widehat{(\textrm{Aut}\,\mathcal{K})^n}$ on $V^n$. 
(The exponential of the action of $n\textrm{Vir}$ on $V^n$ is well defined: this follows from the fact that the action of $L_p$ with $p>0$ on $V$ has negative degree $-p$, and can be integrated since $V$ has gradation bounded from below; the action of $L_0$ on $V$ has integral eigenvalues, and thus integrates to a multiplication by a unit; finally, the action of $L_p$ with $p<0$ integrates to an action of a nilpotent element; see \cite[\S 7.2.1]{bzf}).
This gives rise to an action of $\widehat{(\textrm{Aut}\,\mathcal{K})^n}$ on $\mathfrak{L}(V)^n$ by conjugation. 
Thus, $\widehat{(\textrm{Aut}\,\mathcal{K})^n}$ acts on $\mathcal{L}(V)^n$ via
\begin{align}
\label{hatAutKnonUVn}
(u\otimes h) \cdot \rho := (\rho^{-1} u \rho) \otimes (h\cdot \rho) ,
\end{align}
for $u\otimes h$ in $\mathcal{L}(V)^n$ and $\rho$ in $\widehat{(\textrm{Aut}\,\mathcal{K})^n}$.

The differential of this action is an action of $n\textrm{Vir}\,\widehat\otimes_\mathbb{C}\, H^0(\widetriangle{\mathcal{M}}_{g,n},\mathcal{O}_{\widetriangle{\mathcal{M}}_{g,n}})$ on $\mathcal{L}(V)^n$ given by
\begin{align}
\label{nViractiononUVn}
(v\otimes f)\ast(u\otimes h):= [u,v]\otimes fh + u\otimes \left( \alpha(v\otimes f)\cdot h \right),
\end{align}
for $u\otimes h \!\in\! \mathcal{L}(V)^n$ and $v\otimes f \!\in\! n\textrm{Vir}\,\widehat\otimes_\mathbb{C}\, H^0(\widetriangle{\mathcal{M}}_{g,n},\mathcal{O}_{\widetriangle{\mathcal{M}}_{g,n}})$. 
The action of $n\textrm{Vir}(\mathbb{C})  \,\widehat\otimes_\mathbb{C}\, \mathcal{O}_{\widetriangle{\mathcal{M}}_{g,n}}$ on $\mathcal{L}(V)^n$ from \eqref{nVirOactiononUVn} is the $\mathcal{O}_{\widetriangle{\mathcal{M}}_{g,n}}$-linear extension of the action \eqref{nViractiononUVn} of $n\textrm{Vir}$ on $\mathcal{L}(V)^n$.
(Recall from Theorem \ref{Virunif} that the anchor map $a$ is the $\mathcal{O}_{\widetriangle{\mathcal{M}}_{g,n}}$-linear extension of the map $\alpha$.)
Hence, to prove the statement, it is enough to show that the action \eqref{hatAutKnonUVn} of $\widehat{(\textrm{Aut}\,\mathcal{K})^n}$ on $\mathcal{L}(V)^n$ preserves the image of $\mathcal{L}_{\mathcal{C}\setminus\mathcal{P}_\bullet}(V)$ in $\mathcal{L}(V)^n$.

Using the action of $\widehat{(\textrm{Aut}\,\mathcal{K})^n}$ on $\widetriangle{\mathcal{M}}_{g,n}$, denote by $(C^\rho, P_\bullet^\rho, t_\bullet^\rho)$ the image of a coordinatized $n$-pointed curve $(C,P_\bullet, t_\bullet)$ via the action of $\rho$ in $\widehat{(\textrm{Aut}\,\mathcal{K})^n}$. Let $\mathscr{L}'_{C\setminus P}(V)$ and $\mathscr{L}'_{C^\rho\setminus P_\bullet^\rho}(V)$ be the fibers of the subsheaf
\[
\textrm{Im}\left(\mathcal{L}_{\mathcal{C}\setminus\mathcal{P}_\bullet}(V)\rightarrow \mathcal{L}(V)^n\right) \subset \mathcal{L}(V)^n
\]
on $(C,P_\bullet, t_\bullet)$ and $(C^\rho, P_\bullet^\rho, t_\bullet^\rho)$, respectively.
It is enough to show that
\begin{align}
\label{urhourho}
\mathscr{L}'_{C^\rho\setminus P_\bullet^\rho}(V) = \rho^{-1} \mathscr{L}'_{C\setminus P}(V) \rho
\end{align}
for $\rho \in \widehat{\textrm{Aut}}\,\mathcal{K}$. As in \cite[\S 17.3.12]{bzf}, this follows from a generalization of a result of Y.-Z.~Huang
\cite[Prop.~7.4.1]{yzhuang}, \cite[\S 17.3.13]{bzf}.
\end{proof}

Let
\begin{align*}
\alpha_{\otimes_{i=1}^n M^i}\colon \mathfrak{L}(V)^{\oplus n}\rightarrow \textrm{End}(\otimes_{i=1}^n M^i)
\end{align*}
be the anti-homomorphism of Lie algebras induced by the action of $\mathfrak{L}(V)$ on each module $M^i$ as in \eqref{alphaoMi}. This map restricts to an action of 
$\textrm{Vir}^n$, and since the central elements $(0, \dots, 0, K, 0, \dots, 0)$ all act as multiplication by the central charge $c$, this action factors through an action of $n\textrm{Vir}$ on $\otimes_{i=1}^n M^i$.
The Lie algebroid $n\textrm{Vir}(\mathbb{C})  \,\widehat\otimes_\mathbb{C}\, \mathcal{O}_{\widetriangle{\mathcal{M}}_{g,n}}$ acts on $\otimes_{i=1}^n M^i\otimes_\mathbb{C}\mathcal{O}_{\widetriangle{\mathcal{M}}_{g,n}}$ as
\begin{align}
\label{nVirOactiononMO}
(v\otimes f)\diamond(w\otimes h):= \left( \alpha_{\otimes_{i=1}^n M^i}(v)\cdot w \right)\otimes fh + w\otimes \left( a(v\otimes f)\cdot h \right)
\end{align}
for $v\in n\textrm{Vir}$, $w\in \otimes_{i=1}^n M^i$, and local sections $f,h$ of $\mathcal{O}_{\widetriangle{\mathcal{M}}_{g,n}}$.
Recall that the action of $\mathfrak{L}(V)^n$ on $\otimes_{i=1}^n M^i$ extends $\mathcal{O}_{\widetriangle{\mathcal{M}}_{g,n}}$-linearly to an action of
$\mathcal{L}(V)^n$  on $\otimes_{i=1}^n M^i\otimes_\mathbb{C}\mathcal{O}_{\widetriangle{\mathcal{M}}_{g,n}}$, and this induces an action of 
$\mathcal{L}_{\mathcal{C}\setminus\mathcal{P}_\bullet}(V)$ on $\otimes_{i=1}^n M^i\otimes_\mathbb{C}\mathcal{O}_{\widetriangle{\mathcal{M}}_{g,n}}$.

\begin{lemma}
\label{UVnoutMOnVir}
The subsheaf 
\begin{equation}
\label{UVnoutMO}
\mathcal{L}_{\mathcal{C}\setminus\mathcal{P}_\bullet}(V)\cdot \left(\otimes_{i=1}^n M^i\otimes_\mathbb{C}\mathcal{O}_{\widetriangle{\mathcal{M}}_{g,n}}\right)
\end{equation}
of $\otimes_{i=1}^n M^i\otimes_\mathbb{C}\mathcal{O}_{\widetriangle{\mathcal{M}}_{g,n}}$ is preserved by the action of $n\textup{\textrm{Vir}}(\mathbb{C}) \,\widehat\otimes_\mathbb{C}\, \mathcal{O}_{\widetriangle{\mathcal{M}}_{g,n}}$ from \eqref{nVirOactiononMO}.
\end{lemma}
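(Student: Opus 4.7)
The plan is to reduce the lemma to a Leibniz-type compatibility between three operations: the action $\ast$ of $n\textrm{Vir}(\mathbb{C}) \,\widehat\otimes_\mathbb{C}\, \mathcal{O}_{\widetriangle{\mathcal{M}}_{g,n}}$ on $\mathcal{L}(V)^n$ from \eqref{nVirOactiononUVn}, the action $\diamond$ of the same algebroid on $\otimes_{i=1}^n M^i \otimes_{\mathbb{C}} \mathcal{O}_{\widetriangle{\mathcal{M}}_{g,n}}$ from \eqref{nVirOactiononMO}, and the $\mathcal{O}$-linear extension of $\alpha_{\otimes_{i=1}^n M^i}$ from \eqref{alphaoMi}. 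Specifically, I would verify that for local sections $\nu$ of $n\textrm{Vir}(\mathbb{C}) \,\widehat\otimes_\mathbb{C}\, \mathcal{O}_{\widetriangle{\mathcal{M}}_{g,n}}$, $\mu$ of $\mathcal{L}(V)^n$, and $w$ of $\otimes_{i=1}^n M^i \otimes_{\mathbb{C}} \mathcal{O}_{\widetriangle{\mathcal{M}}_{g,n}}$, the identity
\[
\nu \diamond (\mu \cdot w) \,=\, (\nu \ast \mu)\cdot w \,+\, \mu \cdot (\nu \diamond w)
\]
holds. Granting this, the lemma is immediate: when $\mu$ is a section of (the image of) $\mathcal{L}_{\mathcal{C}\setminus\mathcal{P}_\bullet}(V)$, Lemma \ref{UVnoutnVir} places $\nu \ast \mu$ in the same subsheaf, so both summands on the right hand side lie in $\mathcal{L}_{\mathcal{C}\setminus\mathcal{P}_\bullet}(V)\cdot\bigl(\otimes_{i=1}^n M^i \otimes_{\mathbb{C}} \mathcal{O}_{\widetriangle{\mathcal{M}}_{g,n}}\bigr)$, and hence so does the left hand side.

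To establish the displayed identity, I would work on simple tensors $\nu = v\otimes f$, $\mu = u\otimes h$, $w=w_0\otimes g$ with $v\in n\textrm{Vir}$, $u\in\mathfrak{L}(V)^n$, $w_0\in \otimes_{i=1}^n M^i$, and $f,h,g$ local sections of $\mathcal{O}_{\widetriangle{\mathcal{M}}_{g,n}}$, then expand both sides directly from \eqref{nVirOactiononUVn}, \eqref{nVirOactiononMO}, and \eqref{alphaoMi}. The terms involving the anchor match via the Leibniz rule for the derivation $a(v\otimes f)$ applied to the product $hg$, and the remaining operator-theoretic terms collapse to
\[
\alpha_{\otimes_i M^i}([u,v]) \,=\, \alpha_{\otimes_i M^i}(v)\,\alpha_{\otimes_i M^i}(u)\,-\,\alpha_{\otimes_i M^i}(u)\,\alpha_{\otimes_i M^i}(v),
\]
which is precisely the anti-homomorphism property of $\alpha_{\otimes_i M^i}$. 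Along the way I would check that the bracket $[\,\cdot\,,v]$ in \eqref{nVirOactiononUVn} and the operator $\alpha_{\otimes_i M^i}(v)$ in \eqref{nVirOactiononMO} are indeed well defined on the quotient $n\textrm{Vir}$ of $\textrm{Vir}^{\oplus n}$; both reductions are recorded in the paragraph preceding \eqref{nVirOactiononMO}, the point being that the $n$ central elements $(0,\ldots,K,\ldots,0)$ all act by the same scalar $c$ in either setting.

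The Leibniz identity itself is a direct calculation, so the essential content of this lemma lies in the preceding Lemma \ref{UVnoutnVir}, which in turn rests on the change-of-coordinates formula \eqref{urhourho} due to Huang. That is where the main obstacle sits: once one knows that the image of $\mathcal{L}_{\mathcal{C}\setminus\mathcal{P}_\bullet}(V)$ inside $\mathcal{L}(V)^n$ is stable under $\ast$, the present lemma follows formally by propagating that invariance through the Leibniz rule above from the Lie algebroid action on itself to its action on $\otimes_{i=1}^n M^i \otimes_{\mathbb{C}} \mathcal{O}_{\widetriangle{\mathcal{M}}_{g,n}}$.
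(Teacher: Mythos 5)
Your proposal is correct and follows essentially the same route as the paper: expand $\nu\diamond(\mu\cdot w)$ into $(\nu\ast\mu)\cdot w$ plus a term of the form $\mu\cdot(\text{something})$, observe the latter is manifestly in the subsheaf, and invoke Lemma \ref{UVnoutnVir} for the former. Your statement of the Leibniz identity is in fact slightly more careful than the paper's displayed expansion (which omits the harmless derivation-of-$t$ term absorbed into $\mu\cdot(\nu\diamond w)$), but the argument and its reduction to the change-of-coordinates input \eqref{urhourho} are identical.
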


\begin{proof}
We need to show that
\begin{equation}
\label{vfuhwt}
(v\otimes f)\diamond \left[(u\otimes h)\cdot (w\otimes t)\right] 
\end{equation}
is in \eqref{UVnoutMO},
for all local sections $v\otimes f \in n\textrm{Vir}(\mathbb{C})\,\widehat\otimes_\mathbb{C}\, \mathcal{O}_{\widetriangle{\mathcal{M}}_{g,n}}$,
$u\otimes h\in \mathcal{L}_{\mathcal{C}\setminus\mathcal{P}_\bullet}(V)$, and $w\otimes t \in \otimes_{i=1}^n M^i \otimes_\mathbb{C}\mathcal{O}_{\widetriangle{\mathcal{M}}_{g,n}}$.
In \eqref{vfuhwt} the first action is as in \eqref{nVirOactiononMO}, and the second one is 
\[
(u\otimes h)\cdot (w\otimes t) := \left( \alpha_{\otimes_{i=1}^n M^i}(u)\cdot w \right) \otimes ht.
\]
Expanding, one verifies that \eqref{vfuhwt} is equal to
\[
\left[ (v\otimes f)\ast (u\otimes h)\right] \cdot (w\otimes t) + (u\otimes h) \cdot \left((\alpha_{\otimes_{i=1}^n M^i}(v)\cdot w) \otimes ft\right).
\]
To show that this is in \eqref{UVnoutMO}, it is enough to show that $(v\otimes f)\ast (u\otimes h)$ is in the image of $\mathcal{L}_{\mathcal{C}\setminus\mathcal{P}_\bullet}(V)$ in $\mathcal{L}(V)^n$, where the action is as in \eqref{nVirOactiononUVn}. This follows from Lemma \ref{UVnoutnVir}.
\end{proof}

\begin{proof}[Proof of Theorem \ref{AtalgonCJ}]
Let $\left(\mathcal{C}\rightarrow S, P_\bullet, t_\bullet\right)$ be a family of stable pointed curves of genus $g$ over a smooth base $S$ with $n$ sections $P_i$ and formal coordinates $t_i$. These data give a moduli map $S\rightarrow \widetriangle{\mathcal{M}}_{g,n}$. 
From the construction of the sheaf of coinvariants, we can
reduce to the case when $\mathcal{C}\setminus P_\bullet(S)$ is affine over $S$ (e.g., $S=\widehat{\mathcal{M}}_{g,n}=\widetriangle{\mathcal{M}}_{g,n}\setminus \Delta$).

The Lie algebroid $n\textrm{Vir}(\mathbb{C})  \,\widehat\otimes_\mathbb{C}\, \mathcal{O}_{S}$ acts on $\otimes_{i=1}^n M^i\otimes_\mathbb{C}\mathcal{O}_{S}$ as in \eqref{nVirOactiononMO}.
By Lemma \ref{UVnoutMOnVir}, the subsheaf 
\[
\mathcal{L}_{\mathcal{C}\setminus\mathcal{P}_\bullet}(V)\cdot \left(\otimes_{i=1}^n M^i\otimes_\mathbb{C}\mathcal{O}_{S}\right)
\]
is preserved by this action.
Since the map $\alpha_{\otimes_{i=1}^n M^i}$ and the anchor map $a$ are compatible with the action of $(\textrm{Aut}_+\mathcal{O})^n$ (as in \S\ref{HCcompM} and \S \ref{HCcomp}), the action 
of $n\textrm{Vir}(\mathbb{C})  \,\widehat\otimes_\mathbb{C}\, \mathcal{O}_{S}$ on $\otimes_{i=1}^n M^i\otimes_\mathbb{C}\mathcal{O}_{S}$ from \eqref{nVirOactiononMO} is $(\textrm{Aut}_+\mathcal{O})^n$-equivariant.

By Lemma \ref{UCPVircinUCPVLemma}, the action of $\mathcal{L}_{\mathcal{C}\setminus\mathcal{P}_\bullet}(V)$ extends an action of $\textrm{Ker}\, a\subset n\textrm{Vir}(\mathbb{C}) \,\widehat\otimes_\mathbb{C}\, \mathcal{O}_{S}$.
It follows that \eqref{nVirOactiononMO} induces an $(\textrm{Aut}_+\mathcal{O})^n$-equivariant action of the Lie algebroid 
\[
n\textrm{Vir}(\mathbb{C}) \,\widehat\otimes_\mathbb{C}\, \mathcal{O}_{S}\Big/\textrm{Ker}\, a,
\]
hence an action of $\mathcal{A}$ on the sheaf $\widetriangle{\mathbb{V}}(M^\bullet)_{(\mathcal{C}\rightarrow S, P_\bullet, t_\bullet)}$.
Equivalently, for some $a\in \mathbb{C}$, there exists an $(\textrm{Aut}_+\mathcal{O})^n$-equivariant action of 
 the logarithmic Atiyah algebra $a\mathcal{A}$ on  $\widetriangle{\mathbb{V}}(M^\bullet)_{(\mathcal{C}\rightarrow S, P_\bullet, t_\bullet)}$,  which descends to an action of $a\mathcal{A}$ on the sheaf ${\mathbb{V}^J}(M^\bullet)_{(\mathcal{C}\rightarrow S, P_\bullet, \tau_\bullet)}$. 

To determine $a$, note that the central element $\overline{K} \in n\textrm{Vir}$ is the image of $(K,0,\dots,0)\in \textrm{Vir}^n\hookrightarrow \mathfrak{L}(V)^n$, which acts via \eqref{alphaoMi} as $c\cdot \textrm{id}$, by definition of the central charge $c$. It follows that $\overline{K}$ acts as $c\cdot \textrm{id}$, hence $a=c$. That is, the logarithmic Atiyah algebra $c\mathcal{A}$ acts on ${\mathbb{V}^J}(M^\bullet)_{(\mathcal{C}\rightarrow S, P_\bullet, \tau_\bullet)}$.
The statement follows from Theorem~\ref{nVirAt}.
\end{proof}

\subsection{The case $c=0$}\label{zero}
In the case of zero central charge $c=0$, 
the action in \eqref{nVirOactiononMO}  induces an $(\textrm{Aut}_+\mathcal{O})^n$-equivariant action of
\[
\left(\textrm{Der}\, \mathcal{K}(\mathbb{C})\right)^n \,\widehat\otimes_\mathbb{C}\, \mathcal{O}_{S}\Big/\textrm{Ker}\, a 
\cong \mathcal{T}_{S}(-\log \Delta)  
\]
on $\widetriangle{\mathbb{V}}(M^\bullet)_{(\mathcal{C}\rightarrow S, P_\bullet, t_\bullet)}$. This gives a logarithmic $\mathcal{D}$-module structure on $\mathbb{V}^J(M^\bullet)$ when $c=0$.


\section{The twisted logarithmic $\mathcal{D}$-module structure on ${\mathbb{V}}(V; M^\bullet)$} 
\label{ProjConnVM}

In Theorem \ref{At2} we specify the twisted logarithmic $\mathcal{D}$-module structure on the  quasi-coherent sheaf of coinvariants on $\overline{\mathcal{M}}_{g,n}$ under the assumption that the modules $M^i$ are simple. In particular $\mathbb{V}_g(V;M^\bullet)$ carries a projectively flat logarithmic connection.
The twisted $\mathcal{D}$-module structure on the  sheaf of coinvariants on the moduli space of \textit{smooth} pointed curves was first introduced in
\cite[\S 17.3.20]{bzf}. Here we extend the description over families of \textit{stable} pointed curves, and identify the twisted logarithmic $\mathcal{D}$-module by determining the precise logarithmic Atiyah algebra acting on sheaves of coinvariants. For sheaves of coinvariants of integrable representations of an affine Lie algebra, this statement was proved by Tsuchimoto  \cite{ts}.

To state the result we need some notation. Let $\Lambda:=\det \textbf{R}\pi_* \,\omega_{\overline{\mathcal{C}}_{g,n}/\overline{\mathcal{M}}_{g,n}}$ be the determinant of cohomology of the Hodge bundle on~$\overline{\mathcal{M}}_{g,n}$, and $\Psi_i=P_i^*(\omega_{\overline{\mathcal{C}}_{g,n}/\overline{\mathcal{M}}_{g,n}})$ the cotangent line bundle on $\overline{\mathcal{M}}_{g,n}$ corresponding to the $i$-th marked point. Let $\mathcal{A}_\Lambda$, $\mathcal{A}_{\Psi_i}$ be the corresponding logarithmic Atiyah algebras with respect to the divisor $\Delta$ in $\overline{\mathcal{M}}_{g,n}$ of singular curves (see \S \ref{LogAtiyahAlgebra}).  We show:

\begin{theorem}
\label{At2}
Let $M^i$ be simple $V$-modules of conformal dimension $a_i$. Then 
the logarithmic Atiyah algebra 
\begin{align}
\label{ati}
\frac{c}{2}\mathcal{A}_\Lambda +\sum_{i=1}^n a_i \mathcal{A}_{\Psi_i}
\end{align}
acts on the sheaf of coinvariants ${\mathbb{V}}_g(V;M^\bullet)$. In particular, ${\mathbb{V}}_g(V; M^\bullet)$ carries a projectively flat logarithmic connection. 
\end{theorem}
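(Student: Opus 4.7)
Starting from Theorem \ref{AtalgonCJ}, which provides an action of $\tfrac{c}{2}\mathcal{A}_\Lambda$ on $\mathbb{V}^J_g(V; M^\bullet)$ over $J := \overline{\mathcal{J}}_{g,n}^{1,\times}$, the goal is to descend along the principal $(\mathbb{G}_m)^n$-bundle $j\colon J \to \overline{\mathcal{M}}_{g,n}$ and identify the logarithmic Atiyah algebra that appears on the base. Since $\mathbb{V}_g(V;M^\bullet) = (j_* \mathbb{V}^J_g(V;M^\bullet))^{(\mathbb{C}^\times)^n}$, what governs the descended twist is the action of the \emph{vertical} tangent directions on $(\mathbb{C}^\times)^n$-invariant sections: the simplicity of the $M^i$'s, through the scalar action $L_0 v = (a_i+\deg v) v$, will produce a scalar excess equal to $a_i$ along the $i$-th vertical direction.

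\textbf{First step.} Analyze the vertical geometry of $j$. The exact sequence
\begin{equation*}
0 \to \mathcal{T}^{\text{vert}}_j \to \mathcal{T}_J(-\log \Delta_J) \to j^*\mathcal{T}_{\overline{\mathcal{M}}_{g,n}}(-\log\Delta) \to 0
\end{equation*}
identifies $\mathcal{T}^{\text{vert}}_j \cong \bigoplus_{i=1}^n \mathcal{O}_J$, trivialized by the Euler vector fields $E_i$ of the $\mathbb{G}_m$-action on the $i$-th factor. Via the Virasoro uniformization (Theorem \ref{Virunif}) combined with the principal $(\textrm{Aut}_+\mathcal{O})^n$-bundle $\widetriangle{\pi}\colon\widetriangle{\mathcal{M}}_{g,n}\to J$, the generator $E_i$ corresponds under the anchor map $a$ to the image of $L_0$ at the $i$-th marked point (up to sign). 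In parallel, since $J$ is the complement of the zero sections in $\bigoplus_{i=1}^n \Psi_i$, each line bundle $j^*\Psi_i$ carries a canonical tautological section, hence a canonical trivialization, inducing a canonical splitting $s_i\colon j^*\mathcal{T}_{\overline{\mathcal{M}}_{g,n}}(-\log\Delta) \hookrightarrow j^*\mathcal{A}_{\Psi_i}$ of each pulled-back Atiyah extension.

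\textbf{Second step.} Compute the vertical action on $\mathbb{V}^J_g$. Applying the explicit formula \eqref{nVirOactiononMO} with $v = L_0^i$, the action of the vertical direction on a local section $w \otimes h$ of $\otimes_i M^i \otimes \mathcal{O}_J$ decomposes as $\alpha_{\otimes M^i}(L_0^i)\cdot w \otimes h + w \otimes (a(L_0^i)\cdot h)$. Since $M^i$ is simple of conformal dimension $a_i$, the first summand contributes $(a_i + \deg v_i)$ times the identity on the $i$-th factor. On $(\mathbb{C}^\times)^n$-invariant sections — those of the form $v \otimes w^{\deg v}$ as in \S \ref{descrMc} — the grading part combines with the Lie derivative $a(L_0^i)\cdot h$ to cancel exactly, leaving the pure scalar $a_i \cdot \mathrm{id}$.

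\textbf{Third step and main obstacle.} Repackage this computation as the action of an Atiyah algebra on $\overline{\mathcal{M}}_{g,n}$. Consider $\mathcal{A} := \tfrac{c}{2}\mathcal{A}_\Lambda + \sum_i a_i \mathcal{A}_{\Psi_i}$ and its pullback $j^*\mathcal{A}$; using the canonical splittings $s_i$ of Step 1, the vertical correction built into $j^*(\sum_i a_i \mathcal{A}_{\Psi_i})$ is precisely a scalar $a_i$ along the $i$-th fiber direction. By Step 2, this matches the vertical scalar action of the Virasoro-induced Atiyah algebra on $\mathbb{V}^J_g$; hence the $(\mathbb{C}^\times)^n$-equivariance required for the descent holds, and the action factors through an action of $\mathcal{A}$ on $\mathbb{V}_g(V;M^\bullet)$. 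Compatibility with the scalar map $\iota_\mathcal{A}$ is inherited from Theorem \ref{AtalgonCJ}, and the projectively flat logarithmic connection on $\mathbb{V}_g(V;M^\bullet)$ follows from the Atiyah-algebra action via the general mechanism of \S \ref{LogAtiyahAlgebra}. The main obstacle is the bookkeeping in Step 3: identifying, in terms of Atiyah extension classes, the tautological splitting of $j^*\mathcal{A}_{\Psi_i}$ with the vertical correction produced by the Virasoro uniformization, and confirming that the two instances of the scalar $a_i$ — one arising from the conformal dimension, the other from the Atiyah twist — correspond under this identification.
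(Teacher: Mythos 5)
Your proposal is correct and follows essentially the same route as the paper: starting from Theorem \ref{AtalgonCJ}, descending along the principal $(\mathbb{G}_m)^n$-bundle $j\colon \overline{\mathcal{J}}_{g,n}^{1,\times}\to\overline{\mathcal{M}}_{g,n}$, identifying the $(\mathbb{C}^\times)^n$-invariant logarithmic vector fields with the Atiyah algebras of the $\Psi_i$, and computing that the Euler (vertical) direction acts on invariant sections $v\otimes w^{\deg v}$ by the scalar $a_i$ coming from $L_0v=(a_i+\deg v)v$. The ``main obstacle'' you flag in Step 3 is exactly what the paper resolves by its explicit local model $\mathrm{Spec}(R[w,w^{-1}])$, where $1\mapsto w\partial_w=-L_0$ and the invariant-vector-field sheaf is $-\mathcal{A}_{\Psi}$, so the two occurrences of $a_i$ match with the correct sign.
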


Here $V$ is a vertex operator algebra with central charge $c$ (see \S \ref{V}). Recall that a simple $V$-module $M$ has conformal dimension $a \in \mathbb{C}$ if $L_0v=(a+\deg v)v$, for homogeneous $v\in M$.

\proof 
From Theorem \ref{AtalgonCJ} we know that $\frac{c}{2}\mathcal{A}_{\Lambda}$ acts on ${\mathbb{V}}^J(M^\bullet)$. This means that for any family of stable pointed curves $(\mathcal{C}\rightarrow S, P_\bullet, \tau_\bullet)$ over a smooth base $S$ with $n$ sections $P_i\colon S\rightarrow \mathcal{C}$ and $n$ non-zero $1$-jets~$\tau_i$, the logarithmic Atiyah algebra $\frac{c}{2}\mathcal{A}_{\Lambda}$ acts on $\mathbb{V}^J(M^\bullet)_{(\mathcal{C} \to S, P_\bullet, \tau_\bullet)}$. We show that this naturally induces an action of $\frac{c}{2}\mathcal{A}_\Lambda +\sum_{i=1}^n a_i \mathcal{A}_{\Psi_i}$ on ${\mathbb{V}}(M^\bullet)_{(\mathcal{C} \to S, P_\bullet)}$. Assume for simplicity  $n=1$.

Since the sheaf  $\mathbb{V}^J(M)_{(\mathcal{C} \to S, P, \tau)}$ has an action of $\frac{c}{2}\mathcal{A}_\Lambda$,  the sheaf
$\mathbb{V}^J(M)_{(\mathcal{C} \to S, P, \tau)} \otimes \Lambda^{-\frac{c}{2}}$ has an action of $\mathscr{T}_{\overline{\mathcal{J}}^{\times}_{g,1}}(-\log \Delta)$. We refer to \cite[\S 2.4]{ts} for the definition of the formal line bundle $\Lambda^{-\frac{c}{2}}$ for arbitrary $c\in \mathbb{C}$.
This implies that ${\mathbb{V}}(M)_{(\mathcal{C} \to S, P)}\otimes \Lambda^{-\frac{c}{2}}$ is equipped with an action of the sheaf of $\mathbb{C}^\times$-invariant tangent vectors of $\overline{\mathcal{J}}^{\times}_{g,1}$ preserving the divisor of singular curves $\Delta$. Following \cite[Thm 5]{AtiyahConnection},  this sheaf coincides with $\mathcal{A}_{\Psi^{-1}}=-\mathcal{A}_{\Psi}$.
It follows that $\mathbb{V}(M)_{(\mathcal{C}\rightarrow S, P)} \otimes \Lambda^{-\frac{c}{2}}$ has an action of the Lie algebroid $ -\mathcal{A}_{\Psi}$. 

In order to conclude, it is enough to prove that the image of $1$ under the canonical map $\mathcal{O}_{S} \to -\mathcal{A}_{\Psi}$ acts on $\mathbb{V}(M)_{(\mathcal{C}\rightarrow S, P)}\otimes \Lambda^{-\frac{c}{2}}$ as multiplication by $-a$, where $a$ is the conformal dimension of the simple $V$-module $M$.
This can be verified locally on $S$. Fix a point $(C,P,0)$ on the divisor $Z:=\Psi \setminus \overline{\mathcal{J}}^{\times}_{g,1}$ and consider the local restriction of $\overline{\mathcal{J}}^{\times}_{g,1}$ near this point as $\textrm{Spec}(R[w,w^{-1}])$, where $R$ is the local ring of $C$ at $P$ and  the divisor $Z$ is locally given by $w=0$. Since $\mathbb{C}^\times$ acts on $w$ by multiplication, the ($\mathbb{C}^\times$)-equivariant vector fields are given by $\mathscr{T}_R \oplus w R \partial_w$. Locally the exact sequence defining $-\mathcal{A}_{\Psi}$ is then given by
\[ 
0 \to R \to  w R \partial_w \oplus \mathscr{T}_R  \to \mathscr{T}_R \to 0
\]
where $1\in R$ is mapped to $w\partial_w=-L_0$. Following the description of $\mathscr{M}_\mathcal{C}$ in \S\ref{descrMc}, we similarly identify the elements of $\mathbb{V}(M)_{(\mathcal{C}\rightarrow S, P)}$ as being represented by linear combinations of elements of the form $v \otimes w^{\deg v}$, where $v$ is a homogeneous element of $M$. Observe that since the action of $-\mathcal{A}_{\psi}$ is compatible with the action of $\mathcal{L}_{\mathcal{C}\setminus P}(V)$ on  $M\otimes \mathcal{O}_S$, we can carry the computation on $\mathbb{C}^\times$-invariant sections of $M\otimes \mathcal{O}_S$. The action of $w\partial w$ on $\mathbb{V}(M)_{(\mathcal{C}\rightarrow S, P)}$ is described in $\eqref{nVirOactiononMO}$. The computation
\begin{align*}
    w\partial w\left(v \otimes w^{\deg v}\right)  &= -L_0(v) \otimes w^{\deg v} + v \otimes w \partial_w w^{\deg v} \\
    &=-(a+\deg v)v \otimes w^{\deg v}  + v \otimes (\deg v) w^{\deg v}\\ 
    &=-a\left(v \otimes w^{\deg v}\right)
\end{align*}
concludes the argument.
\endproof

\subsection{Remark}
\label{Belkale}
While in this work we study the projectively flat connection $\nabla$ for sheaves on $\overline{\mathcal{M}}_{g,n}$ defined using vertex operator algebras along the lines of \cite{tuy, ts, beka, bk}, there are other approaches and further questions studied in the classical case, where conformal blocks are known to be  isomorphic to generalized theta functions (see e.g.,~\cite{Hi, FaltingsCon,  FelderCon, LaszloCon}).
For instance, the {\em{geometric unitary conjecture}} \cite{GLectures, FGScalar, GKChernSimons}  was an explicit proposal of a unitary metric on the Verlinde bundles that would be (projectively) preserved by the connection $\nabla$ on $\mathcal{M}_{g,n}$. The geometric unitary conjecture for $\mathfrak{s}\mathfrak{l}_{2}$ was proved by Ramadas \cite{Ramadas}, and for arbitrary Lie algebras in genus zero by Belkale \cite{BelkaleConnect}.   It would be interesting to know if this picture could be extended to bundles defined by modules over  vertex operator algebras.


\section{Chern classes of vector bundles $\mathbb{V}_g(V;M^\bullet)$ on $\mathcal{M}_{g,n}$}\label{Chern}

An expression for Chern classes of vector bundles on ${\mathcal{M}}_{g,n}$ of coinvariants of affine Lie algebras was given  in \cite{mop}.  
One of the key ingredients used in \cite{mop} is the explicit description of the projectively flat logarithmic connection from \cite{ts}.
In a similar fashion, the projectively flat logarithmic connection from Theorem \ref{At2}  allows to compute Chern classes of vector bundles of coinvariants of vertex algebras on ${\mathcal{M}}_{g,n}$ to arrive at  Corollary \ref{chMgn2} below.

Let $V$ be a vertex operator algebra of central charge $c$, and $M^1,\dots,M^n$ be simple  $V$-modules. 
Let $a_i$ be the conformal dimension of $M^i$, for each $i$, that is, $L_0(v)= \left( a_i + \deg v \right) v$, for homogeneous $v\in M^i$.
In this section, we assume that the sheaf of coinvariants $\mathbb{V}_g(V;M^\bullet)$ is  
the sheaf of sections of a vector bundle of finite rank on ${\mathcal{M}}_{g,n}$. 
This assumption is known to be true in special cases, as described in \S \ref{Remark}. 

We also assume that the central charge $c$ and the conformal dimensions $a_i$ are rational. For instance, after \cite{dlm}, this is satisfied for
a rational and $C_2$-cofinite vertex operator algebra $V$.
 
Let $\lambda:=c_1(\Lambda)$, and $\psi_i:=c_1(\Psi_i)$.
From Theorem \ref{At2} with $S=\mathcal{M}_{g,n}$, the action of the Atiyah algebra \eqref{ati}  gives a projectively flat connection on the sheaf of coinvariants $\mathbb{V}_g(V;M^\bullet)$ on $\mathcal{M}_{g,n}$.
This determines the Chern character of the restriction of $\mathbb{V}_g(V;M^\bullet)$ on $\mathcal{M}_{g,n}$. 

\begin{corollary}
\label{chMgn2}
When $\mathbb{V}_g(V;M^\bullet)$ has finite rank on $\mathcal{M}_{g,n}$ and $c, a_i\in \mathbb{Q}$, one has 
\[
{\rm ch}\left(  \mathbb{V}_g(V;M^\bullet)  \right) = {\rm rank} \,\mathbb{V}_g(V;M^\bullet) \, \cdot \, \exp\left( \frac{c}{2}\,\lambda + \sum_{i=1}^n a_i \psi_i \right)\, \in H^*(\mathcal{M}_{g,n}, \mathbb{Q}).
\]
Equivalently, the total Chern class is
\[
c\left(  \mathbb{V}_g(V;M^\bullet)  \right) =
\left( 1+ \frac{c}{2}\,\lambda + \sum_{i=1}^n a_i \psi_i \right)^{{\rm rank} \,\mathbb{V}_g(V;M^\bullet)} \in H^*(\mathcal{M}_{g,n}, \mathbb{Q}).
\]
\end{corollary}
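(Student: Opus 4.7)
The plan is to invoke Theorem \ref{At2}, restricted to the open substack $\mathcal{M}_{g,n} \subset \overline{\mathcal{M}}_{g,n}$, where the boundary divisor $\Delta$ is empty and the logarithmic Atiyah algebras reduce to ordinary ones. Setting $\mathcal{E} := \mathbb{V}_g(V;M^\bullet)|_{\mathcal{M}_{g,n}}$, a vector bundle of rank $r$ by hypothesis, Theorem \ref{At2} endows $\mathcal{E}$ with an action of
\[
\mathcal{A} \;:=\; \tfrac{c}{2}\mathcal{A}_\Lambda + \sum_{i=1}^n a_i \mathcal{A}_{\Psi_i}.
\]
Formally this is the Atiyah algebra $\mathcal{A}_\mathcal{L}$ of the $\mathbb{Q}$-line bundle $\mathcal{L} := \Lambda^{c/2}\otimes \bigotimes_i \Psi_i^{a_i}$. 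The hypothesis $c,a_i\in \mathbb{Q}$ guarantees that $c_1(\mathcal{L}) = \tfrac{c}{2}\lambda + \sum_i a_i\psi_i$ is a well-defined class in $H^2(\mathcal{M}_{g,n},\mathbb{Q})$, and that after passing to an \'etale cover on which the requisite rational roots of $\Lambda$ and the $\Psi_i$ exist, $\mathcal{L}$ is realized as a genuine line bundle.

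The key general fact to invoke is that an action of $\mathcal{A}_\mathcal{L}$ on a rank-$r$ bundle $\mathcal{E}$ is equivalent to an ordinary flat connection on the twisted bundle $\mathcal{E}\otimes \mathcal{L}^{-1}$. Since a bundle admitting a flat connection has trivial Chern character in rational cohomology, one has $\operatorname{ch}(\mathcal{E}\otimes\mathcal{L}^{-1}) = r$, and multiplicativity of the Chern character yields
\[
\operatorname{ch}(\mathcal{E}) \;=\; r\cdot \operatorname{ch}(\mathcal{L}) \;=\; r\cdot \exp\!\bigl(c_1(\mathcal{L})\bigr),
\]
which descends from the cover back to $\mathcal{M}_{g,n}$ and gives precisely the first claimed formula. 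This is the same mechanism employed for affine Lie algebra coinvariants in \cite{mop}, using the explicit projectively flat connection of \cite{ts}. The total Chern class formula then follows purely formally: a bundle with Chern character $r\exp(\alpha)$ for a degree-two class $\alpha$ has all Chern roots equal to $\alpha$, hence total Chern class $(1+\alpha)^r$. That the Chern classes lie in the tautological ring is then automatic since $\lambda$ and the $\psi_i$ are tautological.

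The main subtlety is establishing the general principle used in the second paragraph, namely that a $\mathbb{Q}$-linear combination of Atiyah algebras of line bundles acting on $\mathcal{E}$ pins down $\operatorname{ch}(\mathcal{E})$ as $r\exp(c_1(\mathcal{L}))$. For integer coefficients this is classical Chern--Weil theory applied to the projectively flat connection furnished by $\mathcal{A}_\mathcal{L}$; the only genuine issue is extracting the appropriate rational roots, handled by the \'etale descent argument above, which is legitimate because we work in rational cohomology. An alternative, more computational route would be to choose a unitary-type connection representative compatible with the action and compute $\operatorname{tr}(e^{F/2\pi i})$ directly, using that the curvature $F$ is scalar and equal to $2\pi i \cdot c_1(\mathcal{L})\otimes \operatorname{id}_{\mathcal{E}}$.
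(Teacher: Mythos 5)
Your proposal is correct and follows essentially the same route as the paper: the paper likewise deduces the formula from the Atiyah algebra action of Theorem \ref{At2} restricted to $\mathcal{M}_{g,n}$, citing that an $\mathcal{A}_{L^{\otimes a}}$-module $E$ of finite rank has $c_1(E)=(\operatorname{rank}E)\,a\,c_1(L)$ and that projective flatness forces $\operatorname{ch}(E)=(\operatorname{rank}E)\exp(c_1(E)/\operatorname{rank}E)$, which is exactly the content of your twist-by-$\mathcal{L}^{-1}$ argument. Your explicit handling of the rational exponents via a cover and the formal deduction of the total Chern class are consistent with what the paper delegates to the cited references.
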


\begin{proof}
The statement follows from the general fact that a $\mathcal{A}_{L^{\otimes a}}$-module $E$ of finite rank, with $L$ a line bundle and $a\in \mathbb{Q}$, satisfies $c_1(E)=({\rm rank}\, E) \cdot a \,c_1(L)$ (e.g., \cite[Lemma 5]{mop}); 
moreover, the projectively flat connection implies that ${\rm ch}(E)=({\rm rank}\, E) \cdot {\rm exp}(c_1(E)/{\rm rank}\, E)$  (e.g., \cite[(2.3.3)]{MR909698}).
\end{proof}

\subsection{Remark}
\label{Remark}
From Theorem \ref{At2},  the sheaf $\mathbb{V}_g(V;M^{\bullet})$ on $\mathcal{M}_{g,n}$ is equipped with a projectively flat connection. 
It follows that when $\mathbb{V}_g(V;M^{\bullet})$ is a sheaf of finite rank on $\mathcal{M}_{g,n}$, $\mathbb{V}_g(V;M^{\bullet})$  is also locally free \cite[\S 2.7]{sorger}.

It is natural to expect finite-dimensionality of spaces  of coinvariants constructed from vertex algebras with finitely many simple modules, when there is an expectation that the factorization property will hold \cite{bzf}.  This has been checked in special cases: For integrable highest weight representations at level $\ell$ of affine Lie algebras  \cite{tuy}; and highest weight representations of the so-called minimal series for the Virasoro algebra \cite{bfm}.  In the cases mentioned above, the vertex algebras are both rational and $C_2$-cofinite. Moreover, spaces of coinvariants associated to  modules over $C_2$-cofinite vertex  algebras have also been shown to be  finite-dimensional in other more general contexts \cite{an1, NT}.   While $C_2$-cofiniteness was  conjectured to be equivalent to rationality \cite{AbeBuhlDong}, this was disproved in \cite{AM}.  While slightly technical to define, $C_2$-cofiniteness  is natural \cite{ArakawaC2}, and satisfied by many types of vertex algebras  including a large class of simple $\mathcal{W}$-algebras containing all exceptional $\mathcal{W}$-algebras, and in particular the minimal series of principal $\mathcal{W}$-algebras discovered by Frenkel, Kac, and Wakimoto \cite{EFKW}, proved to be  $C_2$-cofinite  by Arakawa \cite{ArakawaC2W} (see also \cite{ArakawaRatW}), vertex algebras  associated to  positive-definite even lattices  \cite{yg, AbeLatticeRationality, jyLattice}, and a number of vertex algebras formed using the orbifold  construction \cite{AbeOrbifold, AbeOrbifold2, MiyamotoOrbifold, AiOrbifold}.

\subsection{Addendum} 
\label{sec:add}
In \cite{DGT2} we introduce a modification of the Lie algebra $\mathscr{L}_{C\setminus P_\bullet}(V)$ for which the results of the current paper still hold. The modified  Lie algebra allows us to show that for a vertex operator algebra $V$ which is rational, $C_2$-cofinite, and such that $V_0\cong \mathbb{C}$, coinvariants from finitely generated admissible $V$-modules 
satisfy the factorization and sewing properties, thus giving rise  to vector bundles of coinvariants on $\overline{\mathcal{M}}_{g,n}$. 
Consequently, we  show in \cite{DGT3} that if $V$ is also simple and self-contragredient, their Chern characters give rise to  cohomological field theories.
In particular,  Chern classes over $\overline{\mathcal{M}}_{g,n}$ lie in the tautological ring.

\bibliographystyle{abbrv}
\bibliography{Biblio}

\end{document}